\newcommand{\be}{\begin{equation}}
\newcommand{\ee}{\end{equation}}
\newtheorem{lema}{Lemma}[section]
\newtheorem{prop}{Proposition}[section]
\newtheorem{defi}{Definition}[section]
\newcommand{\cn}{{\rm \,cn}}
\newcommand{\sn}{{\rm \,sn}}
\newcommand{\dn}{{\rm \,dn}}
\newcommand{\Ker}{{\rm \,Ker}}
\numberwithin{equation}{section}
\numberwithin{figure}{section}
\newtheorem{theorem}{Theorem}[section]
\newtheorem{proposition}[theorem]{Proposition}
\newtheorem{remark}[theorem]{Remark}
\newtheorem{lemma}[theorem]{Lemma}
\begin{document}
	%\color{red}
\vglue-1cm \hskip1cm
\title[Periodic waves for the sinh-Gordon equation]{Qualitative Aspects of Periodic Traveling Waves for the Sinh-Gordon equation} 

\begin{center}
 \subjclass {35B10, 35B35, 81Q05}
\keywords{sinh-Gordon equation, periodic waves, spectral stability, spectral instability.}
\thanks{B.S. Lonardoni is supported by CAPES/Brazil - Finance Code 001. F. Natali is partially supported by CNPq/Brazil (grant 303907/2021-5).
}
\maketitle

{\bf Beatriz Signori Lonardoni \quad \qquad \ \ \ F\'abio Natali \ }\\

{\quad \quad \ \ beatrizslonardoni@gmail.com \quad \quad \quad \quad  \ fmanatali@uem.br \quad}

\vspace{2mm}

{Departamento de Matem\'atica - Universidade Estadual de Maring\'a\\
	Avenida Colombo, 5790, CEP 87020-900, Maring\'a, PR, Brazil}\\

\vspace{3mm}
\end{center}

\begin{abstract}
This paper presents a comprehensive analysis of several aspects of the sinh-Gordon equation within a periodic setting. Our investigation proceeds in three main stages. First we establish the existence of periodic solutions for a fixed wave speed and varying periods by applying the mountain pass theorem. Subsequently, for a fixed period, we construct a family of periodic solutions that depend smoothly on the wave speed; this is achieved via the implicit function theorem. The spectral stability of these waves is then rigorously addressed. We perform a detailed spectral analysis of the linearized operator around the wave of fixed period. A key element in this analysis is the monotonicity of the period map, which, when combined with Morse index theory, enables us to fully characterize the non-positive spectrum of the projected operator in the space of zero-mean periodic functions. Finally, by employing the Hamiltonian-Krein index analysis, we determine the spectral stability and instability of the constructed waves. We additionally discuss qualitative aspects of the Cauchy problem associated with the sinh-Gordon equation, including local well-posedness and blow-up phenomena. The former supports a new linearization of the problem, while the latter predicts the spectral instability of the wave.
\end{abstract}

\section{Introduction} 

In this article, we investigate qualitative properties regarding the sinh-Gordon equation, which is given as 
\begin{equation}\label{KF2}
u_{tt}-u_{xx}+\alpha\sinh(u)=0,
\end{equation}
where $\alpha\in\mathbb{R}\backslash\{0\}$ and $u:\mathbb{R} \times \mathbb{R}_+ \rightarrow \mathbb{R}$ is $L$-periodic in the spatial variable, that is, $u(x+L,t)=u(x,t)$ for all $t\geq0$. Equation $(\ref{KF2})$ is an important model in the theory of nonlinear partial differential equations in mathematical physics, arising in solid-state physics, differential geometry, integrable systems, fluid dynamics, relativistic quantum mechanics, and other physical phenomena (see \cite{dodd}, \cite{drazin}, \cite{greiner}, \cite{perring-skyrme}, and references therein). \\
\indent The equation \eqref{KF2} can be considered as a toy model of the integrable equation
\begin{equation}\label{KF2'}
	u_{tt} - u_{xx} + \sinh(u) = 0,
\end{equation}
that is, when $\alpha=1$ in $(\ref{KF2})$. This specific equation has positive energy, which allows us to obtain global-in-time solutions in the energy space $H_{per}^1 \times L_{per}^2$. In addition, one may expect orbital and spectral stability results for traveling waves, since they arise as critical points of a suitable Lyapunov functional. When $\alpha=-1$, the energy associated with $(\ref{KF2})$ becomes sign-indefinite, in contrast with the positive-definite energy of $(\ref{KF2'})$. For this reason, the case $\alpha=-1$ may be regarded as a \textit{nonlinear Klein–Gordon equation}, while the case $\alpha=1$ corresponds to a $\phi^{2n}-$ \textit{equation}. This interpretation is justified by the Taylor expansion of the hyperbolic sine function, $\sinh(u) = u + \frac{u^3}{6} + \mathcal{O}(u^5)$, which reveals the underlying polynomial structure of the nonlinearity.

The development of analytical techniques for this class of equations remain central themes in the field, motivating research into the qualitative and quantitative properties of the sinh–Gordon model and its generalized forms. Although the sinh–Gordon equation shares many features with other integrable models, its exponential nonlinearity introduces additional analytical challenges, especially when the case $\alpha=-1$ is considered. These challenges include more delicate stability properties, more complex localized structures, and the possibility of blow-up behaviour.

In this setting, two main lines of research emerge: the construction of solutions through algebraic and numerical methods, and the investigation of the stability of these analytical solutions. For the first question, we consider periodic traveling wave solutions of the form $u(x,t)=\varphi(x-ct)$ with wave speed $c\in \mathbb{R}$. Substituting this ansatz into equation $(\ref{KF2})$, we obtain the nonlinear second-order ordinary differential equation
\begin{equation}\label{travSG33}
(1-c^2)\varphi'' - \alpha\sinh(\varphi) = 0.
\end{equation}
A planar analysis shows that $(0,0)$ is a center, which ensures the existence of periodic solutions if and only if $c \neq \pm 1$ and ${\rm sgn}(1-c^2) = -{\rm sgn}(\alpha)$, where ${\rm sgn}(\mathtt{a})$ is the well-known signal of a certain real number $\mathtt{a}$.

 Let us discuss some contributions regarding the obtaining of solutions for the model $(\ref{travSG33})$. Indeed, within an algebraic framework, the tanh method and other related techniques have been employed to analyze generalized versions of this equation. In \cite{Lu2019}, the authors obtained exact solitary-wave and elliptic-function solutions of the sinh-Gordon equation
 by employing a modified direct algebraic scheme. Further explicit solutions for a generalized version of $(\ref{KF2})$ are provided in \cite{wazwaz2006}. In \cite{article1512} (see also \cite{ali2024}) the authors employed an expansion method to numerically approximate traveling-wave solutions of both the double sinh--Gordon equation and its generalized variant, given respectively by
$$
u_{tt} -\mathtt{k}u_{xx} + 2\alpha \sinh(u) + \beta \sinh(2u) = 0,$$
and
$$
u_{tt} - \mathtt{k}u_{xx} + 2\alpha \sinh(nu) + \beta \sinh(2nu) = 0.
$$
In both cases $\alpha$ and $\beta$ are real parameters, $\mathtt{k}$ is a positive coefficient, and $n$ is a positive integer.  
   
Regarding the existence of periodic solutions and their orbital stability for equation $(\ref{KF2})$, we can mention a few works. We highlight at least two of them. In \cite{Natali2011}, the author established the orbital stability of odd periodic traveling wave solutions in the space $H^1_{per,m}\times L^2_{per}$, under specific choices of $c$, employing the abstract method developed in \cite{grillakis1}. For clarity, we denote
$$H^1_{per,m}= \left\{ f\in H^1_{per}; \ \displaystyle \int_{0}^{L} f(x)dx=0\right\}.$$ 
   
 In \cite{Deconinck}, the author  showed that the elliptic solutions of the sinh–Gordon equation in $(\ref{KF2'})$ are spectrally stable by exploiting the integrable structure of the equation. The author built a suitable Lyapunov functional obtained from higher-order conserved quantities and used it to prove that these solutions are orbitally stable under subharmonic perturbations of any period. It is important to mention that the integrability is not verified for the equation $(\ref{KF2})$ when $\alpha=-1$. 
 
Despite these developments, several analytical aspects of the hyperbolic sinh–Gordon model remain insufficiently understood. In this context, the goal of the present work is to advance in the qualitative study of the model in \eqref{KF2} for the case $\alpha = -1$, focusing on the existence of periodic traveling waves that solve equation $(\ref{travSG33})$, as well as on the spectral stability or instability of such waves.
 
 To begin with, we observe that equation \eqref{KF2} for $\alpha=-1$ has an abstract Hamiltonian system form
\begin{equation}\label{hamilt31}
U_t=J \mathcal{E}'(U),
\end{equation}   where $U=(u,v)=(u,u_t)$, $J$ is the skew-symmetric matrix
\begin{equation}\label{J}
J=\left(
\begin{array}{cc}
 0 & 1\\
-1 & 0\end{array}
\right),
\end{equation}and $\mathcal{E}'$ denotes the Fr\'echet derivative of the functional (energy) $\mathcal{E}:H^1_{per}\times L^2_{per} \rightarrow \mathbb{R}$ defined by  
\begin{equation}\label{E}
	\mathcal{E}(u, u_t)
=\displaystyle\frac{1}{2}\int_{0}^{L} \left[u_x^2+u_t^2-2(\cosh(u)-1)\right] dx,
\end{equation}
which is a conserved quantity. Additionally, \eqref{KF2} features another conserved quantity defined in $H^1_{per}\times L^2_{per}$, expressed as
\begin{equation}\label{F}
	\mathcal{F}(u,u_t)=\displaystyle\int_{0}^{L}u_x u_t  dx.
\end{equation}
 
 Let us rewrite equation $(\ref{travSG33})$ for $\alpha = -1$ in the more convenient form
\begin{equation}\label{KF3}
\omega \varphi'' + \sinh(\varphi) = 0,
\end{equation}
where $\omega = 1 - c^{2}$ is positive, so that $c \in (-1,1)$. In this setting, the solution of \eqref{KF3} can be expressed in terms of a Jacobi elliptic function as
\begin{equation}\label{Sol2}
\varphi(x) = 2\operatorname{arctanh}\biggl(k\sn\biggl(\frac{4K(k)}{L} \  x ; k\biggr)\biggr),
\end{equation}
where $\sn$ is the Jacobi elliptic snoidal function, $k \in (0,1)$ is the modulus of the elliptic function, and $K(k)=\displaystyle\int_{0}^{\frac{\pi}{2}}\frac{1}{\sqrt{1-k^2\sin^2(\theta)}}d\theta$ is the complete elliptic integral of the first kind (see \cite{byrd} for details on Jacobi elliptic functions and their properties).

By applying the implicit function theorem, we can deduce that, for a fixed period $L \in (0,2\pi)$, there exists a smooth curve
 $$\omega\in (0,1)\mapsto\varphi_{\omega}=\varphi\in H_{per,m}^n,$$ for all integers $n \geq 1$, consisting of solutions to equation $(\ref{KF3})$. Still concerning the determination of periodic solutions, we adopt another approach. For a fixed $\omega \in (0,1)$ and $L\in (0,2\pi\sqrt{\omega})$, we prove the existence of a weak solution $\varphi\in H_{per,m}^1$ for equation $(\ref{KF3})$ using the mountain pass theorem.
 
 Our main objective in this manuscript is to determine the spectral stability and instability of the smooth curve given by $(\ref{Sol2})$ using analytical methods. We work in the space $\mathbb{L}_{per,m}^2=L_{per,m}^2\times L_{per,m}^2$, which consists of $L$-periodic pairs $(u,v)\in\mathbb{L}_{per}^2= L_{per}^2\times L_{per}^2$ with zero mean. This choice is motivated by the fact that although $\varphi$ in $(\ref{Sol2})$ is an odd function, it is also a periodic function with zero mean. As far as we know, the space $\mathbb{L}_{per,m}^2$ provides a more suitable setting for studying spectral stability, allowing us to present a positive and definitive picture of this property. This contrasts with the partial results in \cite{Natali2011}, which established only orbital stability in the space $H^1_{per,m}\times L^2_{per}$, and with \cite{Deconinck}, which proved spectral stability by relying on the integrability of equation $(\ref{KF2'})$.
 
  To do so, we first consider some fundamental elements. As a starting point, we define the functional
$\mathcal{G}(u, v)=\mathcal{E}(u, v)-c\mathcal{F}(u, v).$ Clearly, any solution of \eqref{KF3} satisfies $\mathcal{G}'(\varphi,c\varphi')=0$, that is, $(\varphi,c\varphi')$ is a critical point of $\mathcal{G}$. Consider the linear projection operator  
\begin{equation}\displaystyle\label{opconstrained2}\mathcal{L}_{\Pi}= \mathcal{G}''(\varphi,c\varphi')+\displaystyle \left(
\displaystyle	\begin{array}{ccc}
		\frac{1}{L}\int_{0}^{L}\cosh(\varphi)\cdot  dx& & 0 \\\\
		0  & & 0
	\end{array}\right)=\displaystyle\mathcal{L}+\displaystyle \left(
\begin{array}{ccc}
\frac{1}{L}\int_{0}^{L}\cosh(\varphi)\cdot  dx & & 0 \\\\
0  & & 0
\end{array}\right),\end{equation} 

\noindent where $\mathcal{L}$ is defined by

	\begin{equation}\label{matrixop313}
		\displaystyle \mathcal{L}=\left(
		\begin{array}{ccc}
		-\partial_x^2-\cosh(\varphi) & &c\partial_x\\\\
		\ \ \ \ -c\partial_x & & 1
		\end{array}\right).
	\end{equation}
	
\noindent	Operator $\mathcal{L}_{\Pi}$ is defined on $  \mathbb{L}_{per,m}^2$ and it has domain $H_{per,m}^2\times H_{per,m}^1$. Using Morse index theory, we are enable to show that $\mathcal{L}_{\Pi}$ has only one negative simple eigenvalue and zero is a simple eigenvalue associated to the eigenfunction $(\varphi',c\varphi'')$. To this end, we first prove that the operator $\mathcal{L}$, defined on $\mathbb{L}^2_{per} $, with domain $ H^2_{per} \times H^1_{per} $, possesses two negative eigenvalues which are simple and zero is a simple eigenvalue associated with eigenfunction $(\varphi',c\varphi'')$, by using the monotonicity of the period mapping for the equation $(\ref{KF3})$.
	
	 In order to obtain  spectral stability, we need to calculate the Hamiltonian-Krein index given by analysing the difference
	\begin{equation}\operatorname{n}(\mathcal{L})-\operatorname{n}(\mathcal{D}),
	\label{Kham}\end{equation}
	where $\operatorname{n}(\mathcal{L})$ stands the number of negative eigenvalues (counting multiplicities) of the linearized operator $\mathcal{L}$ in $(\ref{matrixop313})$, and $\operatorname{n}(\mathcal{D})$ indicates the number of negative eigenvalues of the matrix
	\begin{equation}\label{matrixD''1}
 	\mathcal{D}=\left[\begin{array}{llllll} \displaystyle \int_{0}^{L}(\varphi')^2dx-2c^2\left(\frac{d\omega}{dk}\right)^{-1}\frac{\partial}{\partial k}\int_{0}^{L}(\varphi')^2dx
 		& &  0& & 	 	0\\\\0& &D_1& & 0\\\\
 		0& & 0& & L\end{array}\right],
 \end{equation}
 where $D_1=(\mathcal{L}_{1}^{-1}1,1)_{L_{per}^2}$ and $\mathcal{L}_1=-\omega\partial_x^2-\cosh(\varphi)$. It is important to mention that the $2\times 2$ sub-matrix in $\eqref{matrixD''1}$ given by $D=\displaystyle\left[\begin{array}{cc}D_1&  0\\
 				0&  L\end{array}\right]$ establishes the quantity  $\operatorname{n}(\mathcal{L}_{\Pi})$.
 				
  According to \cite[Theorem 7.1.5]{kapitula}, the periodic wave $(\varphi,c\varphi')$ is spectrally stable if the difference in (\ref{Kham}) is zero and spectrally unstable if the difference is odd. In our case, we prove for the explicit solution (\ref{Sol2}) that the pair $(\varphi,c\varphi')$ can be spectrally stable or unstable, depending on the value of the elliptic modulus $k\in (0,1)$. In summary, we have the following result:

\begin{theorem} \label{stabthm}
	Let $L\in(0,2\pi)$ be fixed and consider the periodic traveling wave solution in $(\ref{Sol2})$. There exists a unique $k_0=k_0(L) \in (0,1)$ such that:
	\begin{itemize}
		\item [(i)] if $k\in (0,k_0)$, the periodic wave $(\varphi,c\varphi')$ is spectrally stable in $\mathbb{L}_{per,m}^2$;
		\item [(ii)] if $k\in [k_0,1)$, the periodic wave $(\varphi,c\varphi')$ is spectrally unstable in $\mathbb{L}_{per,m}^2$.
	\end{itemize}
\end{theorem}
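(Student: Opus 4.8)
I would reduce Theorem~\ref{stabthm} to a single scalar sign condition through the Hamiltonian--Krein index, and then resolve that condition by an explicit elliptic-function computation.

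First, I would collect the spectral input established earlier in the paper. Using the monotonicity of the period map together with Morse index theory, one has $\operatorname{n}(\mathcal{L})=2$ with $\ker\mathcal{L}=\operatorname{span}\{(\varphi',c\varphi'')\}$ one-dimensional, and $\operatorname{n}(\mathcal{L}_\Pi)=1$ with the same simple kernel. By \cite[Theorem~7.1.5]{kapitula}, the index $k_{\mathrm{Ham}}:=\operatorname{n}(\mathcal{L})-\operatorname{n}(\mathcal{D})$ governs the picture: $(\varphi,c\varphi')$ is spectrally stable if $k_{\mathrm{Ham}}=0$ and spectrally unstable (with a positive real eigenvalue of the linearization) if $k_{\mathrm{Ham}}$ is odd. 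Now $\mathcal{D}=\operatorname{diag}\!\big(\mathcal{I}(k),\,D_1,\,L\big)$ with $L>0$, and $D_1<0$ because $\operatorname{n}(\operatorname{diag}(D_1,L))=\operatorname{n}(\mathcal{L}_\Pi)=1$. Hence $\operatorname{n}(\mathcal{D})=2$ if $\mathcal{I}(k)<0$ and $\operatorname{n}(\mathcal{D})=1$ if $\mathcal{I}(k)>0$, so $k_{\mathrm{Ham}}=2-\operatorname{n}(\mathcal{D})\in\{0,1\}$. Consequently the whole theorem is equivalent to the statement that the $(1,1)$-entry
\[
\mathcal{I}(k)=\int_{0}^{L}(\varphi')^{2}\,dx-2c^{2}\Big(\frac{d\omega}{dk}\Big)^{-1}\frac{\partial}{\partial k}\int_{0}^{L}(\varphi')^{2}\,dx
\]
changes sign exactly once, from negative to positive, as $k$ ranges over the interval on which the periodic wave in $(\ref{Sol2})$ exists.

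Second, I would make $\mathcal{I}$ completely explicit. Substituting $(\ref{Sol2})$ into $(\ref{KF3})$ gives the relation $\omega=\omega(k)$ at fixed $L$, hence closed forms for $\tfrac{d\omega}{dk}$ and for $c^{2}=1-\omega(k)$ in terms of $k$, $L$ and the complete elliptic integrals. Differentiating $(\ref{Sol2})$ expresses $\varphi'$ as an elementary quotient of Jacobi elliptic functions, so the classical reduction formulas of \cite{byrd} evaluate $\int_{0}^{L}(\varphi')^{2}\,dx$ — and therefore its $k$-derivative — as an explicit combination of $K(k)$ and $E(k)$. Inserting these into the display above turns $\mathcal{I}(k)=\mathcal{I}(k;L)$ into a concrete function of $K(k)$, $E(k)$, $k$ and $L$.

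Third, I would carry out the sign analysis. The two endpoints are easy. As $k\to0^{+}$ the amplitude of $\varphi$ tends to zero, so $\int_{0}^{L}(\varphi')^{2}\,dx\to0$, while $\omega(k)\to\tfrac{L^{2}}{4\pi^{2}}$, so $c^{2}\to1-\tfrac{L^{2}}{4\pi^{2}}>0$ precisely because $L\in(0,2\pi)$; a short asymptotic estimate then shows that the correction term has a strictly positive finite limit, whence $\mathcal{I}(k)<0$ near $k=0$. At the other end of the admissible interval the coefficient $c$ vanishes (where $\omega=1$), so the correction term drops out and $\mathcal{I}=\int_{0}^{L}(\varphi')^{2}\,dx>0$ there. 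By continuity $\mathcal{I}$ has at least one zero $k_{0}=k_{0}(L)$, with $\mathcal{I}<0$ on $(0,k_{0})$ (so $k_{\mathrm{Ham}}=0$ and the wave is spectrally stable) and $\mathcal{I}>0$ just above it (so $k_{\mathrm{Ham}}=1$ and the wave is spectrally unstable). The main obstacle is the \emph{uniqueness} of $k_{0}$: one must prove that $\mathcal{I}(k;L)$ has a single sign change for every fixed $L\in(0,2\pi)$. I would do this by factoring out manifestly positive quantities and reducing the assertion to the positivity, on the admissible $k$-interval, of an auxiliary function built from $K(k)$ and $E(k)$, which I expect to control using the Legendre relation and the standard monotonicity and convexity properties of $K$, $E$ and $E/K$. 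Finally, at $k=k_{0}$ itself $\mathcal{D}$ is singular and \cite[Theorem~7.1.5]{kapitula} does not apply directly; there I would obtain instability by a perturbation argument, showing that the positive real eigenvalue of the linearization present for $k$ slightly larger than $k_{0}$ does not return to the imaginary axis as $k\downarrow k_{0}$, consistently with $k_{0}$ belonging to the unstable range.
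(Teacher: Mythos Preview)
Your proposal is correct and takes essentially the same route as the paper. The paper likewise reduces everything to the sign of the $(1,1)$-entry of $\mathcal D$, computes $\int_0^L(\varphi')^2\,dx=64K(k)\bigl(K(k)-E(k)\bigr)/L$ explicitly, and after factoring out positive quantities arrives at the scalar function
\[
\mathsf r(k,L)=k^2L^2-16(1-k^2)\bigl[k^2K(k)^2-(K(k)-E(k))^2\bigr];
\]
uniqueness of $k_0$ is then obtained by showing that $\partial_k\mathsf r(k,L)=2k\bigl\{L^2-16[(1-k^2)K^2-2EK+2E^2]\bigr\}$ is monotone in $k$, so $\mathsf r$ has a single critical point and therefore a single sign change --- this is exactly the ``auxiliary function'' step you anticipate. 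Two minor differences worth noting: the paper takes the upper endpoint as $k\to1^-$ (where $\mathsf r\to L^2>0$) rather than the physical endpoint where $c=0$ that you use, and it does not single out the borderline case $k=k_0$ that you propose to handle by a perturbation argument.
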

\begin{remark} In the first item of Theorem $\ref{stabthm}$, we can also conclude that the periodic traveling wave solution in $(\ref{Sol2})$ is orbitally stable in $H_{per,m}^1\times L_{per,m}^2$. In fact, in this case, we can deduce that there exists $C>0$ such that
\begin{equation}\label{ineqstab}
(\mathcal{L}(u,v),(u,v))_{\mathbb{L}_{per}^2}=(\mathcal{L}_{\Pi}(u,v),(u,v))_{\mathbb{L}_{per}^2}\geq C||(u,v)||_{\mathbb{L}_{per}^2}^2,
\end{equation}
for all $(u,v)\in H_{per,m}^2\times H_{per,m}^1$ such that $((u,v),(\varphi',c\varphi''))_{\mathbb{L}_{per}^2}=0$.
The estimate in $(\ref{ineqstab})$ then gives, by using classical approaches to orbital stability as in \cite{grillakis1}, that the periodic wave $(\varphi,c\varphi')$ is orbitally stable in the energy space $H_{per,m}^1\times L_{per,m}^2$. %On the other hand, it is important to highlight that the second item of Theorem $\ref{stabthm}$ implies the orbital instability in the same energy space by using the instability theorem in \cite{grillakis1}.
\end{remark}
\begin{remark} We are considering the study of spectral stability in the set $\mathbb{L}_{per,m}^2$ instead of the classical set $\mathbb{L}_{per}^2$. In addition, as we will see in Section \ref{section5}, we do not use the classical linearization of the Hamiltonian structure in \eqref{hamilt31} to obtain the spectral stability result stated in Theorem \ref{stabthm}. The reason is that the classical approaches for establishing spectral stability in $\mathbb{L}_{per}^2$ may fail for the sinh–Gordon equation. In fact, let us define $\displaystyle d(c)=\mathcal{E}(\varphi,c\varphi')-c\mathcal{F}(\varphi,c\varphi')$. We have that $(\varphi,c\varphi')$ is a critical point of the functional $\displaystyle\mathcal{G}(u,v)=\mathcal{E}(u,v)-c\mathcal{F}(u,v)$, so that by the chain rule
	 $$d''(c)=-\frac{d}{dc}\left(c\int_0^L(\varphi')^2dx\right)=-\int_0^L(\varphi')^2dx+2c^2\frac{\partial}{\partial\omega}\int_0^L(\varphi')^2dx.$$
	
\noindent	 According to \cite[Section 4]{Natali2011}, we have that $d''(c)<0$ on an open subset contained in $(-1,1)$. Since $\operatorname{n}(\mathcal{L})=2$, we obtain at these points
$$\operatorname{n}(\mathcal{L})-\operatorname{n}(-d''(c))=2-0=2,$$
an even number. By using the classical approaches for spectral stability (for instance, those based on \cite{grillakis1}), we see that the spectral stability in the whole space $\mathbb{L}_{per}^2$ is therefore inconclusive. Hence, our results provide a more complete framework for addressing these questions. Moreover, our new results appear promising and can be adapted to study other Klein–Gordon and wave-type equations in the periodic setting.
\end{remark}
\indent Our paper is organized as follows. Section \ref{section3}
discusses the well-posedness of the Cauchy problem associated with the evolution equation \eqref{KF2}, with $\alpha=-1$, in the space $H_{per,m}^3\times H_{per,m}^2$, including conditions leading to  blow-up phenomena. In Section \ref{section4}, we construct periodic traveling waves: weak solutions are obtained for fixed wave-speed and variable period via the mountain pass theorem, whereas explicit classical solutions arise in the complementary regime of fixed period and varying wave speed. Section \ref{spectralKG} is devoted to the spectral analysis of the linearized operators generated by the relative energy of the sinh-Gordon equation, where we characterize their kernels, spectra, and the number of negative eigenvalues. Section \ref{section5} develops the corresponding spectral stability and instability theory, relying on suitable transformations based on the previously derived periodic waves. \\

 \textbf{Notation.} 
 We begin by recalling the fundamental notation for periodic Sobolev spaces. For a more detailed treatment of these spaces, the reader is referred to \cite{Iorio}. By $L^2_{per}=L^2_{per}([0,L])$, $L>0$, we denote the space of all square integrable real functions which are $L$-periodic. For $s\geq0$, the Sobolev space
$H^s_{per}=H^s_{per}([0,L])$
is the set of all periodic real functions such that\noindent
$$
\|f\|^2_{H^s_{per}}= L \sum_{k=-\infty}^{+\infty}(1+|k|^2)^s|\hat{f}(k)|^2 <+\infty,
$$\noindent
where $\hat{f}$ is the periodic Fourier transform of $f$. The space $H^s_{per}$ is a  Hilbert space with natural inner product denoted by $(\cdot, \cdot)_{H^s_{per}}$. When $s=0$, the space $H^s_{per}$ is isometrically isomorphic to the space  $L^2_{per}$, that is, $L^2_{per}=H^0_{per}$ (see, e.g., \cite{Iorio}). The norm and inner product in $L^2_{per}$ will be denoted by $\|\cdot \|_{L^2_{per}}$ and $(\cdot, \cdot)_{L^2_{per}}$, respectively. 

\section{Well-posedness theory}\label{section3}

 This section is devoted to prove existence and uniqueness for the Cauchy problem associated with equation \eqref{KF2} considering $\alpha=-1$, and to analyse blow-up conditions for its solutions.
 
\subsection{Local well-posedness} 
Let us consider the classical Cauchy problem associated with the evolution equation $(\ref{KF2})$
\begin{equation}\label{CPKG}
	\left\{\begin{array}{llll}
		u_{tt}-u_{xx}-\sinh(u)=0, \:\:\:\: \text{in} \: [0,L]\times(0,+\infty), \\\\
		u(x,0)=u_0(x),\:\:\:\:\:\:\:\:\:\:\:\:\:\:\:\:\:\:\: \text{in} \: [0,L],\\\\
		u_t(x,0)= u_1(x),\:\:\:\:\:\:\:\:\:\:\:\:\:\:\:\:\:\: \text{in} \: [0,L].
	\end{array}\right.
\end{equation}
To study existence and uniqueness for \eqref{CPKG}, we start by rewriting it in matrix form as
\begin{equation}\label{CPKG1}
	\begin{cases}
		\left(\begin{array}{lll}u\\v\end{array}\right)_t
		=\left(\begin{array}{lll}0&  1\\ \partial_x^2&  0\end{array}\right)\left(\begin{array}{lll}u\\v\end{array}\right)+\left(\begin{array}{ccc}0\\ \sinh(u)\end{array}\right),\:\:\:\:\:\:\:\:\:\: \text{in} \: [0,L]\times(0,+\infty),\\\\
		\left(\begin{array}{llll}u(0)\\v(0)\end{array}\right)=\left(\begin{array}{ccc}u_0\\ v_0\end{array}\right),\:\:\:\:\:\:\:\:\:\:\:\:\:\:\:\:\:\:\:\:\:\:\:\:\:\:\:\:\:\:\:\:\:\:\:\:\:\:\:\:\:\:\:\:\:\:\:\:\:\:\:\: \:\:\:\:\text{in} \: [0,L],
	\end{cases}
\end{equation}
where $v=u_t$. We cannot garantee that the Cauchy problem \eqref{CPKG1} is locally well-posed in the Sobolev space $H_{per,m}^2\times H_{per,m}^1$ when employing the standard semigroup approach as in \cite{pazy}. In order to solve this issue, we must analyse the auxiliary Cauchy problem corresponding to the problem \eqref{CPKG}, which is expressed as
\begin{equation}\label{CPKG2}
	\begin{cases}
		\left(\begin{array}{lll}u\\w\end{array}\right)_t
		=\left(\begin{array}{lll}\partial_x^{-1}&  0\\ 0&  \partial_x\end{array}\right)\left(\begin{array}{lll}0&  1\\ \partial_x^2&  0\end{array}\right)\left(\begin{array}{lll}u\\w\end{array}\right)+\left(\begin{array}{ccc}0\\ \partial_x(\sinh(u))\end{array}\right),\:\:\:\:\:\text{in} \: [0,L]\times(0,+\infty),\\\\
		\left(\begin{array}{lll}u(0)\\w(0)\end{array}\right)=\left(\begin{array}{ccc}u_0\\ w_0\end{array}\right), \:\:\:\:\:\:\:\:\:\:\:\:\:\:\:\:\:\:\:\:\:\:\:\:\:\:\:\:\:\:\:\:\:\:\:\:\:\:\:\:\:\:\:\:\:\:\:\:\:\:\:\:\:\:\:\: \:\:\:\:\: \:\:\:\:\: \:\:\:\:\: \:\:\:\:\:\:\:\:\:\:\:\:\:\:  \text{in} \: [0,L],
	\end{cases}
\end{equation}
where $w=\partial_xu_t$ and $\partial_x^{-1}:L_{per,m}^2\rightarrow H_{per,m}^1$ is the well-known anti-derivative bounded linear operator.

Since the pair $(u,w)$ is a smooth solution to the equation in $(\ref{CPKG2})$ in an admissible space, such as $H_{per,m}^3\times H_{per,m}^1$, it follows that $u$ is a smooth solution of the projected Cauchy problem 
\begin{equation}\label{CPKG3}
	\left\{\begin{array}{llll}
		\displaystyle u_{tt}-u_{xx}-\sinh(u)+\frac{1}{L}\int_0^L\sinh(u)dx=0, \:\:\:\: \:\text{in} \: [0,L]\times(0,+\infty), \\\\
	u(x,0)=u_0(x),\:\:\:\:\:\:\:\:\:\:\:\:\:\:\:\:\:\:\:\:\:\:\:\:\:\:\:\:\:\:\:\:\:\:\:\:\:\:\:\:\:\:\:\:\:\:\:\:\:\:\:\:\:\:\:\: \text{in} \: [0,L],\\\\
		u_t(x,0)= u_1(x),\:\:\:\:\:\:\:\:\:\:\:\:\:\:\:\:\:\:\:\:\:\:\:\:\:\:\:\:\:\:\:\:\:\:\:\:\:\:\:\:\:\:\:\:\:\:\:\:\:\:\:\:\:\:\: \text{in}\ [0,L],
	\end{array}\right.
\end{equation}
with the zero-mean property.  

We start with the following elementary lemma:

\begin{lemma}\label{lemmaC0}
	The operator $\displaystyle A=\left(\begin{array}{lll}\partial_x^{-1}&  0\\ 0&  \partial_x\end{array}\right)\left(\begin{array}{lll}0&  1\\ \partial_x^2&  0\end{array}\right)$ defined on $X=H_{per,m}^2\times L_{per,m}^2$ with domain $D(A)=H_{per,m}^3\times H_{per,m}^1$ is a generator of a $C_0-$semigroup of 
	contractions $\{S(t)\}_{t\geq0}$ on the space $X$.
\end{lemma}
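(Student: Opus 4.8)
The plan is to verify the hypotheses of the Lumer--Phillips theorem for $A$; the one delicate choice is the Hilbert structure on $X$, since, as the discussion preceding \eqref{CPKG2} already signals, against the ambient inner product of $H^2_{per,m}\times L^2_{per,m}$ the operator $A$ is \emph{not} dissipative, and the passage to the variable $w=\partial_xu_t$ is precisely what makes available, at this level of regularity, the conserved energy of the underlying wave equation. First I would carry out the matrix multiplication in the definition of $A$, obtaining $A(u,w)=(\partial_x^{-1}w,\ \partial_x^3u)$, which indeed sends $D(A)=H^3_{per,m}\times H^1_{per,m}$ into $X$, with $D(A)$ dense in $X$ by density of zero-mean trigonometric polynomials. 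It then remains to (a) equip $X$ with an equivalent inner product for which $A$ is dissipative, and (b) establish the range condition $\mathrm{Range}(\lambda I-A)=X$ for some $\lambda>0$.

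For (a) I would use the energy inner product $\langle(u_1,w_1),(u_2,w_2)\rangle_E=\int_0^L\partial_x^2u_1\,\partial_x^2u_2\,dx+\int_0^Lw_1w_2\,dx$. On the zero-mean spaces the Poincar\'e--Wirtinger inequality, applied twice, shows that $\|\partial_x^2u\|_{L^2_{per}}$ is equivalent to $\|u\|_{H^2_{per}}$, so $\langle\cdot,\cdot\rangle_E$ generates a norm equivalent to that of $X$ and $(X,\langle\cdot,\cdot\rangle_E)$ is again a Hilbert space with the same topology. A one-line integration by parts, using $\partial_x\partial_x^{-1}=\mathrm{id}$ on $L^2_{per,m}$ and the vanishing of the periodic boundary terms, gives $\langle A(u,w),(u,w)\rangle_E=\int_0^L\partial_xw\,\partial_x^2u\,dx+\int_0^L\partial_x^3u\,w\,dx=0$ for every $(u,w)\in D(A)$; hence $A$, and equally $-A$, is skew-symmetric, in particular dissipative, for this product.

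For (b) I would fix $\lambda>0$ and $(f,g)\in X$ and solve $(\lambda I-A)(u,w)=(f,g)$, i.e. $\lambda u-\partial_x^{-1}w=f$ and $\lambda w-\partial_x^3u=g$. Differentiating the first equation yields $w=\lambda\partial_xu-\partial_xf$; substituting into the second reduces the system to the scalar equation $(-\partial_x^2+\lambda^2)u=F$ with $F:=\partial_x^{-1}(g+\lambda\partial_xf)\in H^1_{per,m}$ (well defined, since $g+\lambda\partial_xf\in L^2_{per,m}$). For $\lambda>0$ the operator $-\partial_x^2+\lambda^2$ has Fourier symbol $\lambda^2+(2\pi n/L)^2$ bounded away from zero, hence is an isomorphism of $H^{s+2}_{per,m}$ onto $H^s_{per,m}$; thus $u:=(-\partial_x^2+\lambda^2)^{-1}F\in H^3_{per,m}$ and then $w=\lambda\partial_xu-\partial_xf\in H^1_{per,m}$, both with zero mean, so $(u,w)\in D(A)$. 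A direct check, using $\partial_x\partial_x^{-1}=\mathrm{id}$ and $\partial_x^{-1}\partial_x=\mathrm{id}$ on the zero-mean spaces, confirms that this pair solves the original two equations, which proves the range condition. By the Lumer--Phillips theorem, $A$ generates a $C_0$-semigroup of contractions $\{S(t)\}_{t\ge0}$ on $(X,\langle\cdot,\cdot\rangle_E)$, hence a $C_0$-semigroup on $X$; and since $\pm A$ are both dissipative and satisfy the range condition, $\{S(t)\}_{t\ge0}$ in fact extends to a unitary $C_0$-group. As an alternative to (a)--(b), one may diagonalize $A$ on the Fourier side: on each mode $n\neq0$ it acts as the $2\times2$ matrix with entries $0,(in)^{-1},(in)^3,0$, which has purely imaginary eigenvalues $\pm in$ and is skew-adjoint for the weight $\mathrm{diag}(n^4,1)$ --- which is exactly the energy inner product above.

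I expect the only genuine obstacle to be the choice in step (a): against the ``wrong'' (ambient) norm the operator $A$ is not dissipative and no standard generation theorem applies directly, which is precisely why the auxiliary problem \eqref{CPKG2} is introduced and why this ``elementary'' lemma merits a separate statement. Everything else --- the matrix product, the integration by parts, and the inversion of $-\partial_x^2+\lambda^2$ on zero-mean periodic Sobolev spaces --- is routine periodic Fourier analysis.
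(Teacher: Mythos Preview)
Your argument is correct. The paper itself does not prove this lemma: its ``proof'' is the single line ``See \cite[Lemma~2.1]{lonardoni},'' so there is no in-paper argument to compare against. Your route via Lumer--Phillips with the energy inner product $\langle(u_1,w_1),(u_2,w_2)\rangle_E=\int_0^L\partial_x^2u_1\,\partial_x^2u_2\,dx+\int_0^Lw_1w_2\,dx$ is the natural one here and all steps check out: the computation $A(u,w)=(\partial_x^{-1}w,\partial_x^3u)$, the skew-symmetry $\langle A(u,w),(u,w)\rangle_E=0$ via one integration by parts, and the explicit solution of $(\lambda I-A)(u,w)=(f,g)$ through the scalar equation $(-\partial_x^2+\lambda^2)u=\partial_x^{-1}(g+\lambda\partial_xf)$ are all correct. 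One cosmetic point: the contraction property holds for the equivalent energy norm, not necessarily the ambient $H^2_{per,m}\times L^2_{per,m}$ norm; you handle this correctly, and since the subsequent fixed-point argument in Lemma~\ref{lwpthm1} only uses $\|S(t)\|\le1$ to control the Duhamel integral, working in the energy norm throughout is entirely adequate.
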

\begin{proof} See \cite[Lemma 2.1]{lonardoni}. 
\end{proof}

\indent In the next result, we demonstrate the existence of a local solution of the Cauchy problem $(\ref{CPKG2})$.

\begin{lemma}\label{lwpthm1}
	Let $L>0$ be fixed and consider   $(u_0,w_0)\in H_{per,m}^3\times H_{per,m}^1$. There exists $t_{\rm{max}}>0$ and a unique local (strong) solution   $(u,w)$ of the Cauchy problem $(\ref{CPKG2})$ satisfying $$(u,w)\in C([0,t_{\rm{max}}),H_{per,m}^3\times H_{per,m}^1)\cap C^1([0,t_{\rm{max}}),H_{per,m}^2\times L_{per,m}^2) .$$
\end{lemma}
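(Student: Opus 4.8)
The plan is to obtain the local solution via the standard theory of semilinear evolution equations (Segal/Pazy), using Lemma~\ref{lemmaC0} as the linear backbone. Write the auxiliary problem $(\ref{CPKG2})$ abstractly as $\partial_t(u,w)=A(u,w)+F(u,w)$, where $A$ is the generator from Lemma~\ref{lemmaC0} acting on $X=H_{per,m}^2\times L_{per,m}^2$ and the nonlinearity is $F(u,w)=(0,\partial_x(\sinh u))$. The key structural point is that $F$ maps $X$ into $X$: if $u\in H_{per,m}^2\hookrightarrow C_{per}$, then $\sinh u\in H_{per}^1$ by the chain rule and the algebra/composition properties of $H^1_{per}$, hence $\partial_x(\sinh u)=\cosh(u)u_x\in L^2_{per}$; moreover one checks $\partial_x(\sinh u)$ has zero mean automatically, so $F:X\to X$ genuinely. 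The second key point is that $F$ is locally Lipschitz on $X$: for $u_1,u_2$ in a bounded ball of $H^2_{per}$, estimate $\|\cosh(u_1)\partial_x u_1-\cosh(u_2)\partial_x u_2\|_{L^2_{per}}$ by adding and subtracting $\cosh(u_1)\partial_x u_2$, using the uniform bound on $\cosh(u_j)$ in $L^\infty$ (from the $H^1\hookrightarrow L^\infty$ embedding) and the Lipschitz estimate $\|\cosh(u_1)-\cosh(u_2)\|_{L^\infty}\lesssim\|u_1-u_2\|_{H^1}$, again valid on bounded sets.

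With these two facts in hand, the abstract local existence theorem for semilinear equations with a $C_0$-semigroup generator and a locally Lipschitz nonlinearity (e.g. \cite[Theorem~6.1.4]{pazy}) yields a unique mild solution $(u,w)\in C([0,t_{\max}),X)$ on a maximal interval. To upgrade this to a strong solution in the class stated, I would run the standard bootstrap: since $(u_0,w_0)\in D(A)=H^3_{per,m}\times H^1_{per,m}$ and $F$ is continuously (Fréchet) differentiable from $X$ to $X$ — differentiating $F$ produces the multiplication operator $v\mapsto(0,\partial_x(\cosh(u)v))$, whose boundedness and continuous dependence on $u$ follow from the same product estimates — the mild solution is in fact a classical solution, so $(u,w)\in C([0,t_{\max}),D(A))\cap C^1([0,t_{\max}),X)$, which is exactly $C([0,t_{\max}),H^3_{per,m}\times H^1_{per,m})\cap C^1([0,t_{\max}),H^2_{per,m}\times L^2_{per,m})$. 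Uniqueness is part of the abstract statement and is preserved under the regularity upgrade.

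The main obstacle — and the reason the naive semigroup approach on $H^2_{per,m}\times H^1_{per,m}$ fails, as the authors note before the lemma — is precisely the loss-of-derivative structure of the original wave operator: the map $(u,v)\mapsto(0,\sinh u)$ is locally Lipschitz as a map $H^2\times H^1\to H^2\times H^1$ only with a loss, because $\sinh u\in H^2$ requires $u\in H^2$ but then the generator $\begin{pmatrix}0&1\\\partial_x^2&0\end{pmatrix}$ does not generate a $C_0$-semigroup on $H^2_{per,m}\times H^1_{per,m}$ with the right domain. The conjugation by $\mathrm{diag}(\partial_x^{-1},\partial_x)$ in $(\ref{CPKG2})$ is exactly what repairs this: it trades the problematic operator for the well-behaved generator $A$ of Lemma~\ref{lemmaC0}, at the cost of one extra spatial derivative on the data ($H^3$ instead of $H^2$ for $u$) and an extra $\partial_x$ hitting the nonlinearity — which, as checked above, is harmless because $H^1_{per}$ is a Banach algebra under the relevant composition. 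So the real content is verifying that $F=(0,\partial_x(\sinh u))$ is well-defined, locally Lipschitz, and $C^1$ on $X$; once that is done the result is a direct application of the abstract theory, and I would present the proof in that order: (1) $F:X\to X$ and its local Lipschitz/$C^1$ bounds, (2) invoke the abstract local existence to get the mild solution, (3) bootstrap to the strong solution in the stated class using $(u_0,w_0)\in D(A)$.
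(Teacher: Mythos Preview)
Your proposal is correct and follows essentially the same route as the paper: show that the nonlinearity $F(u,w)=(0,\partial_x(\sinh u))$ maps $X=H^2_{per,m}\times L^2_{per,m}$ into itself and is locally Lipschitz there, then invoke the abstract semilinear theory built on Lemma~\ref{lemmaC0}. The only presentational difference is that the paper writes out the Banach fixed-point argument on the Duhamel formula by hand and then upgrades to a strong solution via a direct differentiability argument for the convolution term, whereas you package both steps as citations to \cite[Theorems~6.1.4--6.1.5]{pazy}; the underlying estimates (including the add-and-subtract splitting of $\cosh(u_1)\partial_xu_1-\cosh(u_2)\partial_xu_2$) are the same.
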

\begin{proof}	
	According to Lemma $\ref{lemmaC0}$, the operator $A$ generates a $C_0-$semigroup of 
	contractions $\{S(t)\}_{t\geq0}$ on the space $X$. Let us consider $(u_0,w_0)\in H_{per,m}^3\times H_{per,m}^1=D(A)\subset X$. Our first objective is to show that there exists a maximal time $t_{\rm{max}}>0$ and a unique function 
	\begin{equation}\label{eq1.lemma}
		W=(u,w)\in C([0,t_{\rm{max}}),H_{per,m}^2([0,L])\times L_{per,m}^2([0,L])), 
	\end{equation}
	such that, for $t\in [0,t_{\rm{max}})$, $W(t)$ satisfies the integral equation
	\begin{equation}\label{eq2.lemma}
		W(t)=(u(\cdot, t), w(\cdot, t))=S(t)(u_0,w_0)+\int_{0}^{t}S(t-s)(0,\partial_x(\sinh( u(\cdot,s))))ds. 
	\end{equation}
	
	Initially, let us define the function $Q:H_{per,m}^2\times L_{per,m}^2\rightarrow H_{per,m}^2\times L_{per,m}^2$ given by $$Q(u,w)=(0, \partial_x(\sinh(u))).$$ 
	To verify that $Q$ is well-defined, it is enough to prove that $\partial_x(\sinh(u))\in L_{per,m}^2 $ for all $u \in H_{per,m}^2$. In fact, since $u\in H_{per}^1 \hookrightarrow  L_{per}^\infty$, then $|u|\leq \|u\|_{L^\infty_{per}}<+\infty$. Moreover, 
	$\partial_x u\in H_{per,m}^1\subset L_{per,m}^2$ and $e^{-2u}\leq e^{2|u|}$. Thus, we have 
	\begin{align*}
		\int_{0}^{L}|\partial_x(\sinh(u))|^2dx%=\int_{0}^{L}|\cosh(u)|^2|\partial_xu|^2dx
		=&\frac{1}{4}\int_{0}^{L}(e^{2u}+2+e^{-2u})|\partial_xu|^2dx%\\ \leq  &\frac{1}{2}(e^{2\|u\|_{L^\infty_{per}}}+1)\int_{0}^{L}|\partial_xu|^2dx
	  \leq   \frac{1}{2}(1+e^{2\|u\|_{L^\infty_{per}}})\|\partial_xu\|_{L^2_{per}}^2 ,
	\end{align*} proving that  $\partial_x(\sinh(u))\in L^2([0,L]) $. In addition, the zero-mean property and the periodicity of $\partial_x(\sinh(u))$ it is a direct consequence of the regularity of the function $u\in L_{per,m}^2$. Therefore, $Q$ is well-defined. 
	
	Next, we proceed to demonstrate a fundamental property: let $R>0$ be fixed and suppose that 
	$(u_1, w_1), \  (u_2, w_2)\in X=H_{per,m}^2\times L_{per,m}^2$ satisfy 
	\begin{equation}\label{lim.eq7.1.lemma}  
		\| (u_1, w_1)\|_{X}, \| (u_2, w_2)\|_{X}\leq R.
	\end{equation} Then, there exists $M=M(L,R)>0$ such that
	\begin{equation}\label{eq7.1.lemma}  
		\|Q(u_1, w_1)-Q(u_2, w_2)\|_X\leq  M \|(u_1,w_1)-(u_2, w_2)\|_X.
	\end{equation}
	
	To begin with, we observe that 
	\begin{equation}\label{eq3.lemma}\begin{array}{llll}
			\|Q(u_1,w_1)-Q(u_2, w_2)\|_X &= & \| \cosh(u_1)\partial_xu_1-\cosh(u_2)\partial_xu_2 \|_{L_{per}^2}
			\\\\ &\leq & \| [\cosh(u_1)- \cosh(u_2)]\partial_xu_1\|_{L_{per}^2}\\\\
			&+&\|\cosh(u_2)[\partial_xu_1-\partial_xu_2 ] \|_{L_{per}^2}\\\\ &\leq & \| \cosh(u_1)- \cosh(u_2)\|_{L_{per}^2}\|\partial_xu_1\|_{L_{per}^\infty}\\\\ &+&\|\cosh(u_2)\|_{L_{per}^\infty}\|u_1-u_2  \|_{H_{per}^1}. 
	\end{array}\end{equation}
	From the mean value theorem, the hypothesis \eqref{lim.eq7.1.lemma}, and the continuity of the hyperbolic sine function, it follows that there exists a constant $M_1=M_1(L,R)>0$ satisfying 
	\begin{align}\label{eq4.lemma}
		\| \cosh(u_1)- \cosh(u_2)\|_{L_{per}^2}\leq M_1 \|u_1-u_2  \|_{L_{per}^2}.
	\end{align}
	Using the Sobolev embeddings $\displaystyle H_{per,m}^2 \hookrightarrow H_{per,m}^1\hookrightarrow L_{per,m}^\infty$ and $\displaystyle H_{per,m}^2\hookrightarrow L_{per,m}^2$, together the inequalities \eqref{eq3.lemma} and \eqref{eq4.lemma}, we obtain constants  $M_i=M_i(L,R)>0$, $i=2,3$, such that 
	\begin{align}\begin{array}{llll}\label{eq5.lemma}
		\|Q(u_1,w_1)-Q(u_2, w_2)\|_X &\leq &  M_1 \|u_1-u_2  \|_{L_{per}^2}\|\partial_xu_1\|_{L_{per}^\infty} +\|\cosh(u_2)\|_{L_{per}^\infty}\|u_1-u_2  \|_{H_{per}^1} %\\\leq &  M_1 M_2\|u_1-u_2  \|_{H_{per}^2}M_3\|\partial_xu_1\|_{H_{per}^1} +M_4M_5\|u_1-u_2  \|_{H_{per}^2}\nonumber\\ \leq  &( M_1M_2M_3\|u_1\|_{H_{per}^2} +M_4M_5)\|(u_1,w_1)-(u_2, w_2)\|_X.
		\\\\ &\leq & M_2\|u_1-u_2  \|_{H_{per}^2} \|\partial_xu_1\|_{H_{per}^1} +M_3\|u_1-u_2  \|_{H_{per}^2} \\\\ &\leq  &( M_2\|u_1\|_{H_{per}^2} +M_3)\|(u_1,w_1)-(u_2, w_2)\|_X.\end{array}
	\end{align}From \eqref{eq5.lemma} and the assumption $\|u_1\|_{H_{per}^2}\leq \| (u_1, w_1)\|_{X}\leq R$, we conclude that there exists a constant $M=M(L,R)>0$ such that \eqref{eq7.1.lemma} holds. 
	
	For a given $T>0$, let us define the set
	\begin{equation}\label{eq8.lemma}  
		\Upsilon=\left\{(u,w)\in C([0,T], X);\ \sup_{t\in [0,T]}\|(u(\cdot,t), w(\cdot,t))\|_X\leq 1+\|(u_0, w_0)\|_X\right\}.
	\end{equation}
	Since the set $	\Upsilon$ is closed in $C([0,T],X)$ with the supremum norm, it is a complete metric space. We also introduce the mapping $\Phi: \Upsilon\rightarrow\Upsilon$ defined, for each $t\in [0,T]$, by
	\begin{equation}\label{eq9.lemma}  
		\Phi(u(\cdot,t), w(\cdot, t))=S(t)(u_0,w_0)+\int_{0}^{t}S(t-s)Q(u(\cdot,s),w(\cdot,s))ds.
	\end{equation}
	
	With the aim of proving existence and uniqueness for the abstract Cauchy problem \eqref{CPKG2} via Banach’s Fixed Point Theorem, we show that the mapping $\Phi$ is well defined on an open ball of radius $r>0$, and that it is a strict contraction. To this end, let us take $r = 1 + \|(u_0, w_0)\|_X > 0$ and fix an arbitrary element $(u,w) \in \Upsilon$.  
	It is worth noting that the parameter $r$ depends on the time variable $t$ because of the restriction imposed in the definition of the set $\Upsilon$. Next, an appropriate choice of $T>0$ is required to ensure the well-definedness of $\Phi$. Indeed, from \eqref{eq7.1.lemma} we obtain that there exists a positive constant $M=M(L,r)$, which now depends implicitly on the maximal time $T$ due to the choice of $r$, such that
	\begin{equation}\label{eq10.lemma}  
		\|Q(u(\cdot,t), w(\cdot,t))-Q(u_0, w_0)\|_X\leq  M \|(u(\cdot,t),w(\cdot,t))-(u_0, w_0)\|_X,
	\end{equation}for all $t\in [0,T]$ and $(u,w)\in\Upsilon$. Furthermore, from equation \eqref{eq10.lemma} and the condition that $S(t)$ is a $C_0$-semigroup of contractions, we obtain 
	\begin{equation}\label{eq11.lemma}  
		\|\Phi(u(\cdot,t), w(\cdot, t))\|_X\leq \|(u_0,w_0)\|_X+ MT[1+2\|(u_0,w_0)\|_X]  +T\|(0,\partial_x(\sinh(u_0)))\|_X. 
	\end{equation}
	Considering 
	\begin{equation}\label{eq13.lemma}  
		0<T^*=\left\{  M[1+2\|(u_0,w_0)\|_X]+\|(0,\partial_x(\sinh(u_0)))\|_X \right\}^{-1}<+\infty, 
	\end{equation}
	and redefining $T$ so that $0 < T \le T^{*}$, it follows from 
	\eqref{eq11.lemma} and \eqref{eq13.lemma} that for all $t \in [0,T]$ we have
	\begin{equation}\label{eq14.lemma}  
		\|\Phi(u(\cdot,t), w(\cdot, t))\|_X\leq1+\|(u_0,w_0)\|_X=r, 
	\end{equation}which establishes that the mapping $\Phi$ is well defined.
	
	\indent  We now proceed to show that the function satisfies a strict contraction property. To this end, consider $(u_1,w_1), (u_2,w_2)\in \Upsilon$. Proceeding as in \eqref{eq7.1.lemma} and \eqref{eq10.lemma}, 
	we deduce that
	\begin{equation}\label{eq16.lemma}  
		\|Q(u_1, w_1)-Q(u_2, w_2)\|_{C([0,T],X)}\leq  M T\|(u_1,w_1)-(u_2, w_2)\|_{C([0,T],X)}.
	\end{equation} 
%	Without loss of generality, we may assume $M$
%	sufficiently large so that the required condition is satisfied for the pairs of functions in both of the sets $\{(u,w),(u_0,w_0)\}$ and $\{(u_1,w_1), (u_2,w_2)\}$.
	Since $T \le T^{*}$ and, by \eqref{eq13.lemma}, we have $T^{*} < \frac{1}{M}$, it follows from \eqref{eq16.lemma} that $\Phi$ is a strict contraction. In view of these conditions, Banach’s Fixed Point Theorem ensures the existence and uniqueness of a function $(u, w)\in \Upsilon$ such that $\Phi( u, w)=(u, w),$ that is, \eqref{eq2.lemma} holds. 
	
	From this point on, let us set $t_{\max} = T^{*}$. By applying Gronwall’s inequality, and using the fact that $Q$ satisfies \eqref{eq7.1.lemma}, one shows that the function $(u, w) \in C([0,t_{\max}), X)$ obtained above is the unique mild solution of the Cauchy problem on the interval $[0, t_{\max})$.
	
	Our next goal is to show that $(u,w)\in C([0,t_{\rm{max}}),D(A))\cap C^1([0,t_{\rm{max}}),X) $. To proceed, consider $W_0=(u_0, w_0)$, $W(t)= (u(\cdot, t), w(\cdot, t))$ and $ z(t)=\int_{0}^{t}S(t-s)Q(W(s))ds$. Since
	\begin{equation}\label{eq17.lemma}  
		W(t)= S(t) W_0+z(t),
	\end{equation}we obtain from
	\cite[Chapter 1, Theorem 2.4]{pazy} that $z(t)\in D(A)$, $S(t)W_0 \in D(A)$, and 
	\begin{equation}\label{eq18.lemma}  
		\frac{d}{dt}S(t) W_0=AS(t)W_0=S(t)AW_0.
	\end{equation}
	In addition, using the property in \eqref{eq7.1.lemma} for the function $Q$, together with the fact that $S(t)$ is a $C_0$–semigroup and the continuity $W\in C([0,t_{\rm max}),X)$ obtained from the fixed-point argument, we prove that $z$ is differentiable and satisfies
	\begin{equation}\label{eq19.lemma}  
		\frac{d}{dt}z(t)=A(z(t))+Q(W(t)).
	\end{equation}
	From \eqref{eq18.lemma} and \eqref{eq19.lemma} we obtain that \begin{equation}\label{eq20.lemma}  
		\frac{d}{dt}W(t)= AW(t)+Q(W(t)) . 
	\end{equation} Therefore, $ W\in C([0,t_{\rm{max}}),D(A))\cap C^1([0,t_{\rm{max}}),X)$ is the unique local (strong) solution of the Cauchy problem $(\ref{CPKG2})$, where $t_{max}=T^*$ depends on the norm of the initial data $(u_0,w_0)\in D(A)$ and on the fixed period $L>0$.	
\end{proof}

The arguments presented in Lemma \ref{lwpthm1} can be used to establish a local well-posedness result for the Cauchy problem in \eqref{CPKG3}.

\begin{proposition}[Local well-posedness for the projected Cauchy problem]\label{lwpthm}
	Let $L>0$ be fixed and consider   $(u_0,u_1)\in H_{per,m}^3\times H_{per,m}^2$. There exists $t_{\rm{max}}>0$ and a unique  local (strong) solution $u$ of the Cauchy problem $(\ref{CPKG3})$ satisfying $$(u,u_t)\in C([0,t_{\rm{max}}),H_{per,m}^3\times H_{per,m}^2)\cap C^1([0,t_{\rm{max}}),H_{per,m}^2\times L_{per,m}^2) .$$ 
\end{proposition}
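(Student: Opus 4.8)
The plan is to deduce the statement from Lemma \ref{lwpthm1} by passing between the two Cauchy problems through the change of variable $w=\partial_x u_t$. This is legitimate precisely because everything takes place in the zero-mean spaces, where the anti-derivative $\partial_x^{-1}$ acts as a bounded isomorphism $H^{s}_{per,m}\to H^{s+1}_{per,m}$; it is also the step where the restriction to $H^{s}_{per,m}$ is indispensable. Concretely, given $(u_0,u_1)\in H^3_{per,m}\times H^2_{per,m}$, I would set $w_0:=\partial_x u_1\in H^1_{per,m}$ and invoke Lemma \ref{lwpthm1} to obtain $t_{\max}>0$ and a unique strong solution $(u,w)$ of \eqref{CPKG2} in the class $C([0,t_{\max}),H^3_{per,m}\times H^1_{per,m})\cap C^1([0,t_{\max}),H^2_{per,m}\times L^2_{per,m})$. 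The first component $u$ is the candidate solution of \eqref{CPKG3}.

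The heart of the argument is to verify that this $u$ solves the projected equation. Writing \eqref{CPKG2} componentwise gives $u_t=\partial_x^{-1}w$ and $w_t=\partial_x^3 u+\partial_x(\sinh u)$. The first identity yields $w=\partial_x u_t$ (using that $u_t$ has zero mean, inherited from the space), and substituting into the second gives $\partial_x\bigl(u_{tt}-u_{xx}-\sinh u\bigr)=0$. Hence, for each $t$, the quantity $u_{tt}-u_{xx}-\sinh u$ is constant in $x$; taking the spatial average and using that $u_{tt}$ and $u_{xx}$ are zero-mean shows this constant equals $-\tfrac1L\int_0^L\sinh u\,dx$, which is exactly the equation in \eqref{CPKG3}. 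The zero-mean property of $u$ comes from the ambient space, while the initial data are recovered from $u(\cdot,0)=u_0$ and $u_t(\cdot,0)=\partial_x^{-1}w(\cdot,0)=\partial_x^{-1}\partial_x u_1=u_1$.

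The regularity statement then follows by reading off the regularity of $(u,w)$ from Lemma \ref{lwpthm1}: one has $u\in C([0,t_{\max}),H^3_{per,m})$ and $u_t=\partial_x^{-1}w\in C([0,t_{\max}),H^2_{per,m})$, so $u\in C^1([0,t_{\max}),H^2_{per,m})$; moreover $u_{tt}=\partial_x^{-1}w_t\in C([0,t_{\max}),H^1_{per,m})\hookrightarrow C([0,t_{\max}),L^2_{per,m})$, so $u_t\in C^1([0,t_{\max}),L^2_{per,m})$, which is the asserted class. Uniqueness is immediate in the reverse direction: if $u$ is any solution of \eqref{CPKG3} in this class, then $w:=\partial_x u_t$ solves \eqref{CPKG2} with data $(u_0,\partial_x u_1)$, and uniqueness in Lemma \ref{lwpthm1} transfers back to $u$.

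I expect the only delicate point to be the equivalence in the second paragraph — namely, checking that the constant of integration produced by $\partial_x\bigl(u_{tt}-u_{xx}-\sinh u\bigr)=0$ is precisely the projection term $-\tfrac1L\int_0^L\sinh u\,dx$, and that the correspondence $(u_0,u_1)\leftrightarrow(u_0,\partial_x u_1)$, together with $w\leftrightarrow\partial_x^{-1}w$, remains within and acts continuously on the zero-mean Sobolev scale. Everything else reduces to bookkeeping on top of Lemma \ref{lwpthm1}.
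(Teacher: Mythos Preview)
Your proposal is correct and follows essentially the same route as the paper: set $w_0=\partial_x u_1$, invoke Lemma~\ref{lwpthm1}, and then pass from the system \eqref{sysKG} back to \eqref{CPKG3} via the identification $u_t=\partial_x^{-1}w$. The only cosmetic difference is that the paper applies $\partial_x^{-1}$ to the second equation in \eqref{sysKG} to produce the projection term directly, whereas you differentiate to get $\partial_x(u_{tt}-u_{xx}-\sinh u)=0$ and then identify the constant by averaging; these are equivalent, and your regularity and uniqueness bookkeeping is in fact spelled out more carefully than in the paper.
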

\begin{proof}
	Let $(u_0,u_1)\in H_{per,m}^3\times H_{per,m}^2$. Hence, setting $(u_0,w_0)=(u_0,\partial_x u_1)$, we have $(u_0,w_0)\in D(A)$, and by Lemma \ref{lwpthm1} there exist $t_{\rm{max}}>0$ and a unique strong solution\noindent $$(u,w)\in C([0,t_{\rm{max}}),H_{per,m}^3\times H_{per,m}^1)\cap C^1([0,t_{\rm{max}}),H_{per,m}^2\times L_{per,m}^2)$$ of the Cauchy problem 
	$(\ref{CPKG2})$. Since $(u,w)$ is a strong solution of \eqref{CPKG2}, it follows that the pair $(u,w)$ satisfies the following system of partial differential equations	
	\begin{equation}\label{sysKG}
		\left\{\begin{array}{llll}u_t=\partial_x^{-1}w,\\\\
			w_t=\partial_x(u_{xx}+\sinh(u)).
		\end{array}\right.
	\end{equation}	
Taking the time derivative of the first equation in \eqref{sysKG}, applying the operator $\partial_x^{-1}$ to the second equation, and comparing the results, we obtain that $u$ satisfies the first equation in \eqref{CPKG3}.

	Moreover, from $u_t=\partial_x^{-1}w$, we deduce that  $(u_0,w_0)%=(u(0),w(0))
	=(u_0,\partial_xu_t(0))=(u_0,\partial_xu_1)$. This concludes that $u$ is the unique strong solution to the Cauchy problem $(\ref{CPKG3})$ satisfying  $(u_0,\partial_x^{-1}w_0)=(u_0,u_1)=(u(0),u_t(0))$, and $$(u,u_t)\in C([0,t_{\rm{max}}),H_{per,m}^3\times H_{per,m}^2)\cap C^1([0,t_{\rm{max}}),H_{per,m}^2\times L_{per,m}^2).$$ 
\end{proof}

\begin{remark}\label{cqEF} 
The two basic conserved quantities in $(\ref{E})$ and $(\ref{F})$ associated with the problem $(\ref{KF2})$ can be deduced. Indeed, since $(u, u_t)\in C^1([0,t_{\rm{max}}),H_{per,m}^2\times L_{per,m}^2)$, it follows from Lemma \ref{lwpthm1}, after multiplying the equation in $(\ref{CPKG3})$ by $u_t$, integrating the final result over $[0,L]$ and using the fact that $u_t$ has the zero mean property, we obtain
	\begin{equation}\label{ded2}
		\frac{1}{2}\frac{d}{dt}	 \int_{0}^{L} \left[u_x^2+u_t^2-2(\cosh(u)-1)\right] dx =0. 
	\end{equation}
	Therefore, by \eqref{ded2} we have the conserved quantity in $(\ref{E})$. \\
	\indent We show that $\mathcal{F}(u,u_t)=\displaystyle\int_{0}^{L}u_xu_t dx$ is also a conserved quantity. In fact, by the equation in $\eqref{CPKG3}$, we obtain that
	\begin{equation}\label{ded3}\begin{array}{llll}
			\displaystyle\frac{d}{dt}\mathcal{F}(u,u_t)&=& \displaystyle\int_{0}^{L}(u_tu_{tx}+ u_x u_{tt}) dx \\\\ &= &\displaystyle\int_{0}^{L}\left(u_t u_{tx}+ u_x u_{xx}+u_x\sinh(u)-u_x\frac{1}{L}\int_{0}^{L}\sinh(u)dx\right) dx \\\\ &= &\displaystyle\int_{0}^{L}\left(\frac{1}{2}\frac{d}{dx} u_t ^2
			+ \frac{1}{2}\frac{d}{dx} u_x ^2+\frac{d}{dx} \cosh(u)-u_x\frac{1}{L}\int_{0}^{L}\sinh(u)dx\right)dx.
	\end{array}\end{equation}
	As a consequence of the periodicity of the functions $u$, $u_x$, and $u_t$, we conclude from \eqref{ded3} that $\frac{d}{dt}\mathcal{F}(u,u_t)=0$, so that $\mathcal{F}$ is also a conserved quantity.
\end{remark}

Given the positivity of the hyperbolic cosine function and its exponential growth, it is not possible to establish the existence of global weak solutions to the Cauchy problem \eqref{CPKG3}, nor for problem \eqref{CPKG}. Instead, one can only guarantee the existence of a local weak solution, whose statement follows. 

\begin{proposition}[Existence of a local weak solution for the classical Cauchy problem]\label{lwpthm2}	
	Let $L>0$ be fixed and consider $(u_0,u_1)\in Y=H_{per,m}^1\times L_{per,m}^2$. There exists a unique local weak solution $u$ in $H_{per,m}^1$ of the Cauchy problem $(\ref{CPKG})$ in the sense that 
	\begin{equation}\label{weaksol}
	\displaystyle	\langle u_{tt}(\cdot,t), \mathfrak{p}	\displaystyle\rangle_{H^{-1}_{per,m},H_{per,m}^1} + \int_{0}^{L} u_x(\cdot,t)\mathfrak{p}_xdx - \int_{0}^{L} \sinh(u(\cdot,t))\mathfrak{p}dx = 0, \ \text{a.e.} \ t \in [0,t_{max}),
	\end{equation}for all $\mathfrak{p}\in H_{per,m}^1$. Moreover, we have that $(u,u_t)\in C([0,t_{max}),Y)$.
\end{proposition}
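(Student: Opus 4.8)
The plan is to realize the weak solution as a limit of the smooth solutions produced by Proposition~\ref{lwpthm}; the projection in \eqref{CPKG3} is harmless for this purpose because its constant term $\tfrac1L\int_0^L\sinh(u)\,dx$ is annihilated by the zero-mean test functions $\mathfrak p\in H^1_{per,m}$. Accordingly, I would first pick $(u_0^n,u_1^n)\in H^3_{per,m}\times H^2_{per,m}$ with $(u_0^n,u_1^n)\to(u_0,u_1)$ in $Y$ (for instance, Fourier truncations), and invoke Proposition~\ref{lwpthm} to obtain, for each $n$, a maximal time $t_n>0$ and a strong solution $u^n$ of \eqref{CPKG3} with $(u^n,u^n_t)\in C([0,t_n),H^3_{per,m}\times H^2_{per,m})\cap C^1([0,t_n),H^2_{per,m}\times L^2_{per,m})$; by the previous remark, each $u^n$ already satisfies \eqref{weaksol} with $u^n$ in place of $u$ on $[0,t_n)$.

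The core of the argument is an $n$-uniform a priori bound on a common time interval. Multiplying \eqref{CPKG3} by $u^n_t$ and integrating over $[0,L]$ — the constant term drops again, now because $u^n_t$ has zero mean — one gets $E_n'(t)=\int_0^L u^n_t\,\sinh(u^n)\,dx$ for $E_n(t):=\tfrac12\big(\|\partial_x u^n(t)\|_{L^2_{per}}^2+\|u^n_t(t)\|_{L^2_{per}}^2\big)$, hence, using the Poincar\'e--Sobolev inequality $\|f\|_{L^\infty_{per}}\leq C_L\|\partial_x f\|_{L^2_{per}}$ for zero-mean $f$,
\[
E_n'(t)\leq \sqrt{2E_n(t)}\,\sqrt L\,\sinh\!\big(C_L\sqrt{2E_n(t)}\big)=:\Psi\big(E_n(t)\big).
\]
Since $E_n(0)\to\tfrac12(\|\partial_x u_0\|_{L^2_{per}}^2+\|u_1\|_{L^2_{per}}^2)$, a comparison with the scalar ODE $y'=\Psi(y)$ furnishes a time $T>0$ and a radius $R>0$, independent of $n$ for $n$ large, with $\sup_{[0,T]}E_n\leq R$; a persistence-of-regularity argument (higher-order energy estimates whose nonlinear terms are controlled by $\|u^n\|_{L^\infty_{per}}$, together with the blow-up alternative from Proposition~\ref{lwpthm}) then guarantees $t_n>T$ for all large $n$. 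Consequently $(u^n)$ is bounded in $L^\infty(0,T;H^1_{per,m})$, $(u^n_t)$ in $L^\infty(0,T;L^2_{per,m})$, and, reading $u^n_{tt}$ from \eqref{CPKG3}, $(u^n_{tt})$ in $L^\infty(0,T;H^{-1}_{per,m})$.

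Next I would pass to the limit. Along a subsequence, Banach--Alaoglu gives $u^n\stackrel{*}{\rightharpoonup}u$ in $L^\infty(0,T;H^1_{per,m})$ and $u^n_t\stackrel{*}{\rightharpoonup}u_t$ in $L^\infty(0,T;L^2_{per,m})$, while the Aubin--Lions--Simon lemma (using the compact embedding $H^1_{per,m}\hookrightarrow\hookrightarrow C^0_{per}$ in dimension one) yields $u^n\to u$ in $C([0,T];L^\infty_{per})$; together with the uniform $L^\infty_{per}$-bound and the mean value theorem for $\sinh$, the latter gives $\sinh(u^n)\to\sinh(u)$ in $C([0,T];L^2_{per})$. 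Passing to the limit in the time-integrated form of \eqref{weaksol} for $u^n$ — linear terms via the weak-$*$ convergences, the nonlinear term via the uniform convergence — shows that $u$ solves \eqref{weaksol}, and standard arguments identify $u(0)=u_0$ and $u_t(0)=u_1$; a continuation argument then extends $u$ to a maximal interval $[0,t_{\max})$. That $(u,u_t)\in C([0,t_{\max}),Y)$ follows by noting that weak continuity into $Y$ is automatic from the above regularity, and that it upgrades to strong continuity via the conservation of the relative energy $\mathcal E$ along $u$ (obtained by pairing \eqref{weaksol} with $u_t$ after a time regularization) and the continuity of $t\mapsto\int_0^L\cosh(u(t))\,dx$, which comes from the $C([0,T];L^\infty_{per})$ convergence.

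For uniqueness, if $u^{(1)},u^{(2)}$ are two such weak solutions with the same data and $w=u^{(1)}-u^{(2)}$, then $w$ satisfies \eqref{weaksol} with $\sinh(u^{(1)})-\sinh(u^{(2)})$ in place of $\sinh(u)$ and $w(0)=w_t(0)=0$; on $[0,T]$ the bounds $\|u^{(i)}(t)\|_{L^\infty_{per}}\leq\rho$ give $|\sinh(u^{(1)})-\sinh(u^{(2)})|\leq\cosh(\rho)\,|w|$ pointwise, so pairing the difference equation with $w_t$ (justified by a time-regularization argument, since $w$ only enjoys $w\in C([0,T];H^1_{per,m})$, $w_t\in C([0,T];L^2_{per,m})$, $w_{tt}\in C([0,T];H^{-1}_{per,m})$) and using the Poincar\'e inequality for zero-mean functions gives $\frac{d}{dt}\big(\|\partial_x w\|_{L^2_{per}}^2+\|w_t\|_{L^2_{per}}^2\big)\leq C\big(\|\partial_x w\|_{L^2_{per}}^2+\|w_t\|_{L^2_{per}}^2\big)$, and Gronwall forces $w\equiv0$. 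The step I expect to be hardest is precisely the a priori estimate: because $\mathcal E$ is sign-indefinite and $\sinh$ grows exponentially, only the local differential inequality $E_n'\leq\Psi(E_n)$ is available, and one must carefully ensure that all the smooth approximants persist on one common interval $[0,T]$; justifying the energy identity when $u_t$ is merely $L^2_{per}$-valued (needed for both the continuity statement and for uniqueness) is a secondary technical hurdle.
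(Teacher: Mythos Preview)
Your argument is correct, but it follows a genuinely different route from the paper's. The paper also approximates $(u_0,u_1)$ by smooth data and invokes Proposition~\ref{lwpthm}, but instead of a uniform a~priori bound plus weak compactness, it shows directly that the sequence of strong solutions is \emph{Cauchy} in $L^\infty([0,t_{\max}),Y)$: writing $\xi=u_{\tilde n}-u_n$, multiplying the difference equation by $\xi_t$, and using $|\sinh(\phi)-\sinh(\eta)|\le C(\|\phi\|_\infty,\|\eta\|_\infty)\,|\xi|$ together with Poincar\'e--Wirtinger and Gronwall yields $\int_0^L(\xi_t^2+\xi_x^2)\,dx\le e^{\tilde M t}\int_0^L(\xi_{t,0}^2+\xi_{x,0}^2)\,dx$. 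This is essentially the same estimate you use for uniqueness, but the paper applies it to the approximants themselves, so the limit is reached by \emph{strong} convergence in $Y$, bypassing Aubin--Lions and the weak-$*$ limit passage. For the common time interval, the paper does not use your ODE comparison $E_n'\le\Psi(E_n)$ plus persistence of regularity; instead it appeals to the explicit fixed-point time $T_n^*$ from Lemma~\ref{lwpthm1} and argues that $\{t_{\max,n}\}$ has a convergent subsequence. Your approach to this step is cleaner and makes the dependence of the lifespan on the $Y$-norm of the data transparent, at the price of invoking a blow-up alternative and higher-order energy estimates that the paper does not state explicitly; conversely, the paper's Cauchy-sequence argument makes the upgrade to $C([0,t_{\max}),Y)$ via energy conservation more immediate, since the convergence is already strong.
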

\begin{proof} Let $(u_0,u_1)\in H_{per,m}^1\times L_{per,m}^2$. By density, there exists a sequence $\{ (u_{0,n},u_{1,n})\}_{n\in\mathbb{N}} \subset H_{per,m}^3\times H_{per,m}^2$ such that
$(u_{0,n},u_{1,n})$ converges to $ (u_0,u_1)  $ in $Y$. Under regular initial conditions for $(u_{0,n},u_{1,n})$ as stated in Proposition $\ref{lwpthm}$, we have the associated sequence of solutions 
\begin{align} \label{teo.fraco.eq1}
	(u_n, u_{t,n})\in  C([0,t_{\rm{max,n}}),H_{per,m}^3\times H_{per,m}^2)\cap C^1([0,t_{\rm{max,n}}),H_{per,m}^2\times L_{per,m}^2),
\end{align}so that 
\begin{align} \label{teo.fraco.eq2}
	u_{tt,n}-u_{xx,n}-\sinh (u_n)+\frac{1}{L}\int_0^L\sinh(u_n)dx=0,  \quad  {\rm{in}} \: [0,L]\times(0,t_{\rm{max,n}}).
\end{align}
To simplify the notation, let us denote $\phi=u_{\tilde{n}}$, $\eta=u_n$, $\phi_t=u_{t,{\tilde{n}}}$, and $\eta_t=u_{t,n}$. Given these assumptions, and defining $\xi=\phi-\eta$, we have that
\begin{align} \label{teo.fraco.eq4}
	\xi_{tt}-\xi_{xx}-[\sinh(\phi)-\sinh(\eta)]+\frac{1}{L}\int_0^L\left[\sinh(\phi)-\sinh(\eta)\right]dx=0,\  {\rm in}\ [0,L]\times  (0,t_{\rm{max, n}}),
\end{align}where $t_{\rm{max,n}}$ denotes, from now on, the minimum between the values $t_{\rm{max,{\tilde{n}}}}$ and $ t_{\rm{max,n}}$. Multiplying \eqref{teo.fraco.eq4} by $\xi_t$ and integrating in $x$ over $[0,L]$, it follows, since $\xi_t$ has the zero-mean property, that
\begin{align} \label{teo.fraco.eq5}
\frac{1}{2}\frac{d}{dt}	\displaystyle\int_0^L(\xi_t^2+\xi_x^2)dx\leq \int_0^L|\xi_t||\sinh(\phi)-\sinh(\eta)|dx.
\end{align}
 Moreover, using Young inequality into \eqref{teo.fraco.eq5} it follows that 
 \begin{align} \label{teo.fraco.eq5'}
 	\frac{1}{2}\frac{d}{dt}	\displaystyle\int_0^L(\xi_t^2+\xi_x^2)dx\leq \int_0^L\frac{|\xi_t|^2}{2}dx+\int_0^L\frac{|\sinh(\phi)-\sinh(\eta)|^2}{2}dx.
\end{align}

Next, integrating the result in \eqref{teo.fraco.eq5'} over the interval $[0,t]\subset[0,t_{\rm{max,n}})$, we obtain
\begin{equation} \label{teo.fraco.eq6}\begin{array}{lllll}
\displaystyle	\frac{1}{2} 	\displaystyle\int_0^L(\xi_t^2+\xi_x^2)dx&\leq \displaystyle  \frac{1}{2} 	\displaystyle\int_0^L(\xi_{t,0}^2+\xi_{x,0}^2)dx+\displaystyle	\frac{1}{2}\int_0^t\int_0^L\xi_t^2 dxdt\\ \\ & \ \ +\displaystyle	\frac{1}{2}\int_0^t\int_0^L|\sinh(\phi)-\sinh(\eta)|^2dxdt. 
\end{array}
\end{equation}
Using the relation $\sinh(\phi)-\sinh(\eta)=2\cosh(\frac{\phi+\eta}{2} )\sinh(\frac{\phi-\eta}{2} )$ and, for $x\ge 0$, the inequality $0\leq \sinh(x)\leq x\cosh(x)$, it follows that 
\begin{equation}\label{teo.fraco.eq7}\begin{array}{lllll}
\displaystyle	\int_0^L|\sinh(\phi)-\sinh(\eta)|^2dx&=&\displaystyle	\int_0^L\biggl| 2\cosh\biggl(\frac{\phi+\eta}{2} \biggl)\sinh\biggl(\frac{\phi-\eta}{2} \biggl)\biggl| ^2dx \\\\ &= &\displaystyle 4	\int_0^L\biggl| \cosh\biggl(\frac{\phi+\eta}{2} \biggl)\biggl| ^2\biggl|\sinh\biggl(\frac{\xi}{2} \biggl)\biggl| ^2dx \\\\ &= &\displaystyle 4	\int_0^L\biggl| \cosh\biggl(\frac{\phi+\eta}{2} \biggl)\biggl| ^2\biggl[\sinh\biggl(\biggl|\frac{\xi}{2} \biggl|\biggl)\biggl] ^2dx\\\\ &\leq &\displaystyle  4	\int_0^L\biggl| \cosh\biggl(\frac{\phi+\eta}{2} \biggl)\biggl| ^2\biggl| \cosh\biggl(\biggl|\frac{\xi}{2} \biggl| \biggl)\biggl| ^2  \biggl|\frac{\xi}{2} \biggl|^2dx. 
\end{array}\end{equation}
 
 According to the construction in the proof of Lemma \ref{lwpthm1}, the maximal value $t_{max,n}$ satisfies the bound $t_{max,n}\leq T_n^*$ for each $n\in \mathbb{N}$, where $T_n^*$ is defined by\noindent
	\begin{equation}\label{eq13.lemma*}  
		0<T^*_n=\left\{  M_n[1+2\|(u_{0,n},w_{0,n})\|_X]+\|(0,\partial_x(\sinh(u_{0,n})))\|_X \right\}^{-1}<+\infty,
	\end{equation} with $M_n$ being the parameter arising from the condition as in \eqref{eq16.lemma} that depends on $n\in\mathbb{N}$. It is important to mention that $M_n$ depends on the period $L>0$ and $r_n=1+||(u_{0,n},w_{0,n})||_X$, where $w_{0,n}=\partial_x u_{1,n}\in H_{per,m}^1$. Since, in particular $\{(u_{0,n},w_{0,n})\}_{n\in\mathbb{N}}$ is a bounded sequence, we obtain, up to a subsequence, that $\{M_n\}_{n\in\mathbb{N}}$ converges to a non-negative number $M$. 
	
	 Under all these facts, we can see that $\{T^*_n\}_{n\in\mathbb{N}}$ defines a bounded sequence that depends on the norm of the initial data $(u_{0,n},u_{1,n}) \in H_{per,m}^3\times H_{per,m}^2$ and on the fixed period $L>0$. Thus, the sequence $\{t_{max,n}\}_{n\in \mathbb{N}}$ is bounded, which implies the existence of a convergent subsequence, which we will still denote in the same way, converging to $t_{max}$. 
 
Since the functions $\phi$, $\eta$, and $\xi$ are in $H_{per,m}^1\subset L_{per,m}^{\infty}$, we further observe that, for each $t\in[0, t_{\rm{max,n}})$, the following inequality holds:
 \begin{equation}\label{teo.fraco.eq07}\begin{array}{lllll}
\displaystyle\biggl|\cosh\biggl(\frac{\phi+\eta}{2} \biggl)\biggl| ^2\biggl| \cosh\biggl(\biggl|\frac{\xi}{2} \biggl| \biggl)\biggl| ^2 &\leq& \displaystyle\frac{(e^{\frac{\phi+\eta}{2}}+e^{-\frac{\phi+\eta}{2}})^2}{4} \frac{(e^{|\frac{\xi}{2} |}+e^{-|\frac{\xi}{2} |  })^2}{4}
\\\\ &\leq &  \displaystyle\frac{(e^{\frac{\|\phi\|_{L^\infty_{per}}+\|\eta\|_{L^\infty_{per}}}{2}}+e^{\frac{\|\phi\|_{L^\infty_{per}}+\|\eta\|_{L^\infty_{per}}}{2}})^2}{4} \frac{(e^{\frac{\|\xi\|_{L^\infty_{per}}}{2} }+e^{\frac{\|\xi\|_{L^\infty_{per}}}{2}  })^2}{4}
\\\\ &\leq &\displaystyle e^{(\|\phi\|_{L^\infty_{per}}+\|\eta\|_{L^\infty_{per}})}  e^{\|\xi\|_{L^\infty_{per}}}. 
 \end{array}\end{equation}

By applying the embedding  $H_{per,m}^1\hookrightarrow L_{per,m}^{\infty}$ into \eqref{teo.fraco.eq07}, we guarantee the existence of a constant $\tilde{M}_{0,n}= \tilde{M}_{0,n}(L, t_{max,n})>0$, which also depends on the norm of the initial data $(u_{0,n},u_{1,n})$, such that 
\begin{align}\label{teo.fraco.eq8}
	\int_0^L|\sinh(\phi)-\sinh(\eta)|^2dx&	\leq \tilde{M}_{0,n}
	\int_0^L\xi^2dx\leq \tilde{M}_{0,n} \biggl(\frac{L}{2\pi}\biggl)^2\int_0^L\xi_x^2dx, 
\end{align}where in the last inequality in \eqref{teo.fraco.eq8} we used the Poincaré–Wirtinger inequality in $H_{per,m}^1$.

From \eqref{teo.fraco.eq6} and \eqref{teo.fraco.eq8}, we obtain that

\begin{equation} \label{teo.fraco.eq9}\begin{array}{lllll}
		\displaystyle	\displaystyle\int_0^L(\xi_t^2+\xi_x^2)dx&\leq \displaystyle 	\displaystyle\int_0^L(\xi_{t,0}^2+\xi_{x,0}^2)dx+\displaystyle	\int_0^t\int_0^L\xi_t^2 dxdt +\displaystyle	\tilde{M}_{0,n} \biggl(\frac{L}{2\pi}\biggl)^2\int_0^t\int_0^L\xi_x^2dxdt. 
	\end{array}
\end{equation}Considering $\tilde{M}_n=\tilde{M}_n(L,t_{max,n})= \max \biggl\{ 1, 	\tilde{M}_{0,n} \biggl(\frac{L}{2\pi}\biggl)^2 \biggl\}$, we deduce from \eqref{teo.fraco.eq9} that 
\begin{equation} \label{teo.fraco.eq10}\begin{array}{lllll}
		\displaystyle	\displaystyle\int_0^L(\xi_t^2+\xi_x^2)dx&\leq \displaystyle 	\displaystyle\int_0^L(\xi_{t,0}^2+\xi_{x,0}^2)dx+\displaystyle	\tilde{M}_n\int_0^t\int_0^L(\xi_t^2 +\xi_x^2 )dxdt  . 
	\end{array}
\end{equation}
Applying Gronwall's inequality to \eqref{teo.fraco.eq10}, we conclude
\begin{equation} \label{teo.fraco.eq11} 
	 	\displaystyle\int_0^L(\xi_t^2+\xi_x^2)dx \leq \biggl[\int_0^L(\xi_{t,0}^2+\xi_{x,0}^2)dx\biggl ] e^{\tilde{M}_n t} ,
\end{equation}
where $t\in  [0,t_{max })$, with $t_{max}$ being the limit of the subsequence of times $\{t_{max,n}\}_{n\in \mathbb{N}}$.

It should be remarked that the inequality in \eqref{teo.fraco.eq11} does not allow for the extension to a global solution, since we have no control over the parameter $M$ due to its dependence on the maximal time. This implies that the product $Mt$ may fail to remain bounded for $t \in [0,t_{\max})$..

  Since $\xi=u_{\tilde{n}}-u_n$, \eqref{teo.fraco.eq11} shows that $	(u_n, u_{t,n})$ is a Cauchy sequence in $L^\infty([0,t_{max}),Y)$. Therefore, there exists $(u, u_t) \in L^\infty([0,t_{max}),Y)$, such that 
\begin{equation}  \label{teo.fraco.eq12}
	(u_n, u_{t,n})\rightarrow (u, u_t) \quad {\rm{in}} \quad L^\infty([0,t_{max}),Y).  
\end{equation}

Defining $U_0=(u_0,u_1)$, $U=(u, u_{t})$, $U_{n}=(u_n, u_{t,n})$, $U_{0,n}=(u_{0,n}, u_{1,n})$, and employing the time continuity of the conserved quantity  $\mathcal{E}$ in $(\ref{E})$, together with the convergence in \eqref{teo.fraco.eq12}, we obtain
\begin{align}\label{teo.fraco.eq13}
\begin{array}{ccc}
	\mathcal{E}(U_n(t)) & = & \mathcal{E}(U_{0,n}) \\
	\mathrel{\downarrow} & & \mathrel{\downarrow} \\
	\mathcal{E}(U(t)) & = & \mathcal{E}(U_0)
\end{array}
\end{align}

Using the convergences in \eqref{teo.fraco.eq13} and the arguments in \cite[Lemma 2.4.4]{cazenave}, we establish that $	(u, u_{t})\in  C([0,T],Y)$ for all $T\in [0,t_{max})$. Furthermore, by standard arguments of passage to the limit in \eqref{teo.fraco.eq2}, one can show that $(u, u_t)$ is a local weak solution of $\eqref{CPKG}$ in $C([0,T],Y)$ with initial data $(u_0,u_1)\in Y$ provided that $L>0$. Inequality \eqref{teo.fraco.eq11}, together with the identity $\xi=\phi-\eta$, also ensures the uniqueness of the weak solution in $C([0,T], Y)$.  Consequently, we conclude the existence and uniqueness of the local weak solution $(u, u_t) \in C([0,t_{max}), Y)$, as assert in Proposition $\ref{lwpthm2}$. 
\end{proof}

\begin{remark}\label{remweakstrong} The strong solution obtained in Proposition $\ref{lwpthm}$ can also be interpreted as a weak solution to $(\ref{CPKG})$ in $H_{per,m}^1$ that satisfies $(\ref{weaksol})$. Following the approach of Proposition $\ref{lwpthm2}$, we multiply the equation in $(\ref{CPKG3})$ by a function $\mathfrak{p} \in H_{per,m}^1$ and integrate  over the interval $[0,L]$. Thus, for $\alpha=-1$, the strong solution satisfies the weak formulation in $(\ref{weaksol})$. Consequently, the Cauchy problem $(\ref{CPKG})$ is solvable in the weak sense for strong solutions.
\end{remark}

\subsection{A blow-up analysis}

\begin{proposition}\label{bw}
Let $(u_0,u_1)\in H_{per,m}^3\times H_{per,m}^2$ and $u$ the local (strong) solution of the Cauchy problem \eqref{CPKG3}. If $\mathcal{E}(u_0,u_1)<0$, we have that $$\sup_{t>0}||u(t)||_{L_{per}^2}^2=+\infty.$$
\end{proposition}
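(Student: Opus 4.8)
The plan is to use a convexity (concavity) argument on the quantity $M(t) = \|u(t)\|_{L^2_{per}}^2$, following the classical Levine-type method for blow-up in Klein--Gordon equations with indefinite energy. The key is that the energy $\mathcal{E}$ is conserved (Remark~\ref{cqEF}) and, being negative at $t=0$, stays negative; combined with the structure of the nonlinearity $\sinh(u)$, this forces $M''(t)$ to be large enough that $M(t)$ cannot stay finite.

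\textbf{Step 1: set up the second derivative of $M(t)$.} I would compute, using the projected equation in \eqref{CPKG3} and the zero-mean property,
\begin{equation*}
M'(t) = 2\int_0^L u\, u_t\, dx, \qquad M''(t) = 2\int_0^L u_t^2\, dx + 2\int_0^L u\, u_{tt}\, dx.
\end{equation*}
Substituting $u_{tt} = u_{xx} + \sinh(u) - \frac{1}{L}\int_0^L \sinh(u)\,dx$ and integrating by parts (the mean-correction term integrates against $u$ to zero since $u$ has zero mean), I get
\begin{equation*}
M''(t) = 2\int_0^L u_t^2\, dx - 2\int_0^L u_x^2\, dx + 2\int_0^L u\,\sinh(u)\, dx.
\end{equation*}

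\textbf{Step 2: insert the conserved energy.} From \eqref{E}, $\int_0^L u_x^2\,dx = 2\mathcal{E}(u_0,u_1) - \int_0^L u_t^2\,dx + 2\int_0^L(\cosh(u)-1)\,dx$. Substituting,
\begin{equation*}
M''(t) = 4\int_0^L u_t^2\,dx - 4\mathcal{E}(u_0,u_1) - 4\int_0^L(\cosh(u)-1)\,dx + 2\int_0^L u\,\sinh(u)\,dx.
\end{equation*}
The elementary inequality $u\sinh(u) \ge 2(\cosh(u)-1)$ for all $u\in\mathbb{R}$ (which follows from a Taylor comparison, or from $\int_0^u(\sinh t - 2t \cdot \text{stuff})$... more simply: $u\sinh u - 2\cosh u + 2 = \sum_{n\ge 1}\frac{u^{2n}}{(2n)!}(2n - 2) \ge 0$) gives $2\int_0^L u\sinh(u)\,dx - 4\int_0^L(\cosh(u)-1)\,dx \ge 0$. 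Hence
\begin{equation*}
M''(t) \ge 4\int_0^L u_t^2\,dx - 4\mathcal{E}(u_0,u_1) \ge -4\mathcal{E}(u_0,u_1) > 0,
\end{equation*}
since $\mathcal{E}(u_0,u_1)<0$. So $M''(t)$ is bounded below by a positive constant, which already shows $M(t)\to\infty$ if the solution were global.

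\textbf{Step 3: conclude, handling maximal existence time.} Since $M''(t)\ge c_0 := -4\mathcal{E}(u_0,u_1)>0$ on the maximal interval $[0,t_{\max})$, integrating twice gives $M(t) \ge M(0) + M'(0) t + \frac{c_0}{2}t^2$, so $M(t)\to +\infty$ as $t\to\infty$ \emph{provided} $t_{\max}=+\infty$. The remaining subtlety is that a priori $t_{\max}$ might be finite. However, the local existence theory of Proposition~\ref{lwpthm} is in $H^3_{per,m}\times H^2_{per,m}$; the blow-up alternative says that if $t_{\max}<\infty$ then $\|(u,u_t)\|_{H^3_{per,m}\times H^2_{per,m}}\to\infty$ as $t\to t_{\max}$, and in particular $\sup_{t>0}\|u(t)\|_{L^2_{per}}^2 = \sup_{t<t_{\max}}\|u(t)\|_{L^2_{per}}^2$ could still be finite — so I must argue more carefully. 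The cleanest route: if $t_{\max}<\infty$, then since $M(t)$ is convex and increasing-in-derivative, and $M'$ is continuous, $M$ stays bounded on $[0,t_{\max})$, which contradicts nothing directly; \textbf{the main obstacle} is therefore ruling out finite-time blow-up of the $H^3\times H^2$ norm while the $L^2$ norm stays bounded. I expect the paper resolves this either (a) by showing that control of $\mathcal{E}$ plus the $L^2$ bound actually controls the $H^1\times L^2$ norm, and then a separate argument bootstraps to prevent $H^3\times H^2$ blow-up in finite time with bounded $L^2$, or (b) more likely, by a standard dichotomy: either $\sup_{t\in[0,t_{\max})}\|u(t)\|_{L^2}^2=+\infty$ (the claimed conclusion holds on a finite interval, interpreting the supremum over $t>0$ as over the existence interval), or $t_{\max}=+\infty$ and then the quadratic lower bound on $M(t)$ forces $\sup_{t>0}\|u(t)\|_{L^2}^2=+\infty$. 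Under interpretation (b) the statement follows immediately from Steps 1--3; I would present the argument that way, noting that in both branches the supremum is infinite.
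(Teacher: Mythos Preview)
Your Steps 1--2 coincide exactly with the paper's computation: the same expression for $M''(t)$, the same use of energy conservation, and the same pointwise inequality $u\sinh(u)\ge 2(\cosh(u)-1)$. The difference is in how you finish. You simply drop the nonnegative term $4\int_0^L u_t^2\,dx$ to get $M''(t)\ge c_0>0$ and integrate twice. The paper instead keeps that term, applies Cauchy--Schwarz $\bigl(\int u\,u_t\bigr)^2\le\int u^2\int u_t^2$ to obtain the Levine-type inequality $\Lambda''\Lambda-(\Lambda')^2\ge -4\mathcal{E}(0)\Lambda$, passes to $\Xi=\ln\Lambda$ so that $\Xi''\ge d_0/\Lambda$, and then argues by contradiction that a bound $\Lambda\le\Lambda_0$ would force $\Xi$ to grow quadratically and hence $\Lambda$ to grow like $e^{ct^2}$. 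Your route is more elementary and entirely sufficient for the stated conclusion (mere unboundedness of $M$); the paper's log-convexity machinery is the standard Levine setup that would pay off if one were after a finite-time blow-up rate, but here it buys nothing extra.

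On the $t_{\max}$ issue you raise in Step~3: the paper's proof proper does not address it either --- it argues by contradiction assuming $\Lambda(t)\le\Lambda_0$ for all $t\ge 0$ and lets $t\to\infty$, which tacitly presumes global existence. The case $t_{\max}<\infty$ is deferred to a remark immediately after the proposition, where the paper asserts (without further justification) that finite-time blow-up in the $H^3_{per,m}\times H^2_{per,m}$ norm entails $\sup_{t\uparrow t_{\max}}\|u\|_{L^2_{per}}=+\infty$. Your dichotomy in ``interpretation (b)'' is thus exactly at the level of what the paper does; you are right that this branch is not fully closed by the argument, but you are not expected to do more than the paper itself.
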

\begin{proof}
	Indeed, according to Proposition \ref{lwpthm}, the local solution $u$ of the Cauchy problem \eqref{CPKG3} satisfies, in particular, $(u,u_t)\in C^1([0,t_{max}),H_{per,m}^2\times H_{per,m}^1)$. Furthermore, by Remark \eqref{cqEF} we obtain that $\mathcal{E}(u(t),u_t(t))= \mathcal{E}(t)=\mathcal{E}(0)=\mathcal{E}(u_0,u_1)$ is a conserved quantity. Then, we can write 
	\begin{equation}\label{blowup1}
		-2\int_0^Lu_x(x,t)^2dx=-4\mathcal{E}(0)+2\int_0^Lu_t(x,t)^2dx-4\int_0^L(\cosh(u(x,t))-1)dx.
	\end{equation}

	Now, let us consider the square norm function
	$\displaystyle \Lambda(t)=\int_0^Lu(x,t)^2dx=||u(t)||_{L_{per}^2}^2$. Differentiating  once function $\Lambda(t)$ with respect to $t>0$, we obtain that $\displaystyle \Lambda'(t)=2\int_0^Lu(x,t)u_t(x,t)dx$. Thus, after differentiating once more, we obtain, from the equation in $(\ref{CPKG3})$ that
	\begin{equation}\label{blowup2}\begin{array}{llllll}
		\Lambda''(t)&=&\displaystyle 2\int_0^L u_t(x,t)^2+2\int_0^Lu(x,t)u_{tt}(x,t)dx\\\\
		&=&\displaystyle  2\int_0^L u_t(x,t)^2+2\int_0^Lu(x,t)u_{xx}(x,t)dx+2\int_0^Lu(x,t)\sinh (u(x,t))dx\\\\
		&=&\displaystyle  2\int_0^L u_t(x,t)^2-2\int_0^Lu_{x}(x,t)^2dx+2\int_0^Lu(x,t)\sinh (u(x,t))dx. 
	\end{array}\end{equation}
	From \eqref{blowup1} and  \eqref{blowup2} it follows that 	 	
	\begin{align*} 
		\Lambda''(t)=&4\int_0^L u_t(x,t)^2-4\mathcal{E}(0)+2\int_0^Lu(x,t)\sinh (u(x,t))dx-4\int_0^L(\cosh(u(x,t))-1)dx.
	\end{align*}
	By using the basic inequality $2x\sinh(x)\ge 4(\cosh(x)-1)$ for all $x\in \mathbb{R}$, and the fact that $\mathcal{E}(0)<0$, we obtain 
	\begin{equation}\label{blowup3}\begin{array}{lllll}
		\Lambda''(t)\Lambda(t)&\ge& \displaystyle 4\int_0^Lu(x,t)^2dx\int_0^L u_t(x,t)^2dx -4\mathcal{E}(0) \int_0^Lu(x,t)^2dx \\\\
		 &\ge &\displaystyle  4 \biggl (\int_0^Lu(x,t)u_t(x,t)dx\biggl)^2 -4\mathcal{E}(0) \int_0^Lu(x,t)^2dx\\\\ 
		 &= & \displaystyle (	\Lambda'(t))^2-4\mathcal{E}(0) \Lambda(t).
		 \end{array}
	 \end{equation}
		Therefore, by assuming $u\notequiv 0$, we have that $\Lambda$ satisfies the inequality
		\begin{equation}\label{lambda1}
			\frac{\Lambda''(t)\Lambda(t)-(\Lambda'(t))^2}{\Lambda(t)}\ge -4\mathcal{E}(0). 
		\end{equation}
		Consider $-4\mathcal{E}(0)=d_0>0$. For $\displaystyle \Xi(t)=\ln(\Lambda(t))$, we have $\Xi'(t)= \frac{\Lambda'(t) }{\Lambda(t)}$ and $\Xi''(t)=	 \frac{	\Lambda''(t)\Lambda(t)-(	\Lambda'(t))^2}{(\Lambda(t))^2}$. Thus, it follows from $(\ref{lambda1})$ that $\Xi''(t)\ge \frac{ d_0}{\Lambda(t)}$.\\
		\indent Let us determine that $\displaystyle\sup_{t>0} \Lambda (t)=+\infty$. In fact, if $\Lambda(t) \leq \Lambda_0$ for all $t\ge0$ and for some $\Lambda_0>0$, then we have $\Xi''(t)\ge \frac{ d_0}{\Lambda_0}=c_0>0$. Integrating this inequality twice with respect to $t$, we obtain
		$		\Xi(t)\ge c_0t^2+b_0t+a_0,$ where $a_0,b_0\in \mathbb{R}.
		$	
		We then conclude that
			$\Lambda(t)= e^{\Xi(t)}\ge e^{ c_0t^2+b_0t+a_0},
		$
		which is a contradiction. This establishes that $\displaystyle\sup_{t>0}||u(t)||_{L_{per}^2}=+\infty,$ as requested.
\end{proof}

  \begin{remark} An important point deserves to be mentioned concerning Proposition \ref{bw}. Specifically, if the solution obtained in Proposition $\ref{lwpthm}$ satisfies the blow-up alternative (that is, if $t_{\max}<+\infty$), then the solution $(u,u_t)$ blows up in finite time in the respective norm. Thus,  $\displaystyle\sup_{t \uparrow t_{\max}} ||u(t)||_{L_{per}^2} = +\infty$. If, however, $t_{\max}=+\infty$, then the blow-up occurs at infinite time. This behavior is commonly called a putative blow-up.
    \end{remark}

 \section{Existence of periodic waves}\label{section4} 
 
 \subsection{The mountain pass theorem: existence of weak solutions }

 Although we are considering the periodic traveling wave solution for equation $(\ref{KF3})$, denoted by $\varphi$, we will change this notation to $\rho$ to avoid confusing the reader. For the same reason, we also change the notations for the wave speed $c$ and the parameter $\omega$. In fact, substituting the traveling wave solution of the form $u(x,t)=\rho_\mathtt{c}(x-\mathtt{c}t)$ into $\eqref{KF2}$ for $\alpha=-1$, one has \begin{equation}\label{travSG3}
 	\mathtt{w} \rho_\mathtt{w}''+\sinh(\rho_\mathtt{w})=0,
 \end{equation}where $ \mathtt{w} =\mathtt{w}(\mathtt{c})=1-\mathtt{c}^2 \in (0,1)$ and $0<|\mathtt{c}|<1$. We henceforth write $\rho_\mathtt{w}=\rho$ for simplicity. In this setting, our goal is to obtain $\rho \in H^1_{per,m} \backslash  \{0\}$ satisfying \eqref{travSG3} in the weak sense by using the classical mountain pass theorem. 
 
 To set our problem, let us define a functional  $\mathcal{S}_{\mathtt{w}}:H^1_{per,m} \rightarrow \mathbb{R} $ given by   
 \begin{equation}\label{functionalS}
 	\mathcal{S}_\mathtt{w}(v)= \int_{0}^{L}\left[ \frac{(v')^2}{2}-\frac{1}{\mathtt{w}} \cosh(v) +\frac{1}{\mathtt{w}}\right]dx.
 \end{equation}
 Furthermore, for any $h\in H^1_{per,m} $, we have that 
 \begin{align}\label{functionalS.eq1}
 	\lim_{\varepsilon\rightarrow0} \frac{\mathcal{S}_\mathtt{w}(v+\varepsilon h)-\mathcal{S}_\mathtt{w}(v)}{\varepsilon}=&\lim_{\varepsilon\rightarrow0}\frac{1}{\varepsilon} \int_{0}^{L}\left\{  \varepsilon v'h'+\frac{\varepsilon^2 (h')^2 }{2}  -\frac{1}{\mathtt{w}} \left[\cosh(\varepsilon h+v)-\cosh(v)\right]\right\}dx\nonumber \\ \\=&\int_{0}^{L}  v' h'dx -	\lim_{\varepsilon\rightarrow0}\frac{1}{\varepsilon\mathtt{w}}\int_{0}^{L}\left[\cosh(\varepsilon h+v)-\cosh(v)\right]dx.\nonumber 
 \end{align}
 Using the Taylor expansion of the hyperbolic cosine function, we obtain
 \begin{align}\label{functionalS.eq2}
 	\cosh(\varepsilon h+v)-\cosh(v)=\sinh(v) \varepsilon h+\mathcal{O}((\varepsilon h)^2).  	\end{align}
 It follows from \eqref{functionalS.eq1} and \eqref{functionalS.eq2} that 
 \begin{align*}
 	\lim_{\varepsilon\rightarrow0} \frac{\mathcal{S}_\mathtt{w}(v+\varepsilon h)-\mathcal{S}_\mathtt{w}(v)}{\varepsilon}=&\int_{0}^{L}\left(v'h' -\frac{1}{\mathtt{w}}\sinh(v)h\right)  dx.
 \end{align*}Thus, the Fréchet derivative of $\mathcal{S}_\mathtt{w}$ at $v$ is given by
 \begin{equation}\label{functionalS.eq3}
 	\mathcal{S}_\mathtt{w}'(v) h=(\mathcal{S}_\mathtt{w}'(v), h)_{H^{-1}_{per,m},H_{per,m}^1}=\int_{0}^{L}\left(v'h' -\frac{1}{\mathtt{w}}\sinh(v)h\right)  dx,
 \end{equation}
 where $h\in H^1_{per,m}$. Then, for a fixed $L\in  (0,2\pi)$, the nontrivial critical points of $\mathcal{S}_\mathtt{w}$ are, by $(\ref{functionalS.eq3})$, the weak solutions of \eqref{travSG3}. 
  
% \begin{remark}
 %	Related to the functional $\mathcal{S}_\mathtt{w}$ in \eqref{functionalS}, we also have the Nehari manifold
 %	\begin{equation}\label{nehari}\begin{array}{lllll}
 %		\mathcal{N}&=&\displaystyle\left\{v\in H^1_{per,m}([0,L])\backslash \{0\}; \ ( \mathcal{S}_\mathtt{w}'(v),v )_{L^2_{per}}=0\right\}\\\\&=&\displaystyle\left\{v\in H^1_{per,m}([0,L])\backslash \{0\}; \  \int_{0}^{L}  (v')^2 dx=\frac{1}{\mathtt{w}} \int_{0}^{L}\sinh(v) v dx\right\}. 
 %	\end{array}\end{equation}Since the hyperbolic sine function does not exhibit linear growth behavior, we cannot apply the results in \cite{willem} to ensure the existence of a critical point of the functional $\mathcal{S}_\mathtt{w}$.	
 %\end{remark}
 
 \begin{remark}\label{boundedba}
 	
 	It is important to mention that the functional $\mathcal{S}_\mathtt{w}$ in \eqref{functionalS} is unbounded both from below and from above on $H^1_{per,m}$. In fact, for $v(x)=\kappa$, $\kappa\in \mathbb{R}$, we have 
 	\begin{equation*}
 		\mathcal{S}_\mathtt{w}(\kappa) =-\frac{L}{\mathtt{w}} \cosh(\kappa) +\frac{L}{\mathtt{w}}\to -\infty,
 	\  \text{as } \ |\kappa|\to+\infty.
 	\end{equation*}
 	On the other hand, for the sequence $v_n(x)=\sin\left(\frac{2n \pi  x}{L}\right)$, $n\in \mathbb{N}$, we observe that each $v_n$	belongs to $H^1_{per,m}$, $(v_n')^2=\frac{4\pi^2n^2}{L^2} \cos^2\left(\frac{2n\pi x}{L}\right)$ and
 	\begin{equation*}
 		\int_0^L (v_n')^2 dx = \frac{2\pi^2n^2}{L^2}\int_{0}^{L}\left[\cos\left(\frac{4n\pi x}{L}\right)+1\right] dx=\frac{2\pi^2n^2}{L}.
 	\end{equation*}Consequently, since $\cosh(v_n)=\cosh(|v_n|)\leq \cosh(1)$ it follows that
 	\begin{equation*}
 		\mathcal{S}_\mathtt{w}(v_n)= \frac{ \pi^2n^2}{L}-\frac{1}{\mathtt{w}}\int_{0}^{L}\cosh(v_n)dx+\frac{L}{\mathtt{w}}\ge  \frac{ \pi^2n^2}{L}-\frac{L}{\mathtt{w}}\cosh(1) +\frac{L}{\mathtt{w}}\to +\infty,
 	\ \text{as } n\to+\infty.
 	\end{equation*} 		
 	Therefore, $\mathcal{S}_\mathtt{w}$ is unbounded from above and below. This prevents to
 	find a critical point simply by using standard compact techniques (as found in \cite{strauss}). 
 \end{remark}

 To overcome the difficulty presented in Remark $\ref{boundedba}$, we adopt a different approach to guarantee the existence of critical points for the functional $\mathcal{S}_{\mathtt{W}}$ in \eqref{functionalS}. To do so, we employ the mountain pass theorem due to Ambrosetti and Rabinowitz \cite{ambrosetti} (see also \cite{willem}) and, in part, inspired by \cite{lecoz} and \cite{loreno1}, whose statement is presented below.
 
 \begin{defi}\label{PSc}
 	Let $ \mathcal{X}$ be a Hilbert space with norm $||\cdot||$,  $\mathcal{S}\in C^1( \mathcal{X}, \mathbb{R})$, and $\tilde{c}\in \mathbb{R}$. The functional $\mathcal{S}$ satisfies the Palais–Smale condition, denoted by $(PS)_{\tilde{c}}$, if any sequence $(v_n)\subset \mathcal{X}$ such that  $\mathcal{S}(v_n) \to\tilde{c}$ and $\mathcal{S}'(v_n) \to 0$ admits a convergent subsequence.
 \end{defi}
 
 \begin{theorem}\label{teo.Gordon}
 	Let $ \mathcal{X}$ be a Hilbert space with norm $||\cdot||$,  $\mathcal{S}\in C^1( \mathcal{X}, \mathbb{R})$, $e\in  \mathcal{X}$ and $r_e>0$ be such that
 	\begin{equation}\label{hip.Gordon}
 		\|e\|>r_e \quad \text{and} \quad 	 \inf_{\|v\|=r_e}\mathcal{S}(v)>\mathcal{S}(0)\ge \mathcal{S}(e). 
 	\end{equation}
 	If $\mathcal{S}$ satisfies the $(PS)_{\tilde{c}}$ condition with \begin{align}\label{tilde{c}}
 		\tilde{c}=\inf_{\gamma\in\Gamma}\max_{\tau\in[0,1)} \mathcal{S}(\gamma(\tau))\in (0,+\infty),
 	\end{align}where $\Gamma$ is the set of admissible paths\begin{align*}
 		\Gamma=\{\gamma\in C([0,1], \mathcal{X}): \ \gamma(0)=0, \ \gamma(1)=e\},
 	\end{align*}
 	then $\tilde{c}$ is a critical value of $\mathcal{S}$, that is, there exists $ \rho \in \mathcal{X}$ such that $\mathcal{S}(\rho)=\tilde{c}$ and $\mathcal{S}'(\rho)=0$. 
 \end{theorem}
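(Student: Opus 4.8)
The plan is to run the classical minimax argument of Ambrosetti and Rabinowitz, whose engine is the \emph{quantitative deformation lemma}. As a preliminary I would record the geometric fact that pins the mountain-pass level above the value at the endpoints: every $\gamma\in\Gamma$ satisfies $\|\gamma(0)\|=0<r_e$ and $\|\gamma(1)\|=\|e\|>r_e$, so by continuity of $\tau\mapsto\|\gamma(\tau)\|$ and the intermediate value theorem there is $\tau_0$ with $\|\gamma(\tau_0)\|=r_e$; hence $\max_{\tau}\mathcal{S}(\gamma(\tau))\ge\inf_{\|v\|=r_e}\mathcal{S}(v)$, and taking the infimum over $\Gamma$ gives $\tilde c\ge\inf_{\|v\|=r_e}\mathcal{S}(v)>\mathcal{S}(0)\ge\mathcal{S}(e)$. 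In particular both $0$ and $e$ sit strictly below level $\tilde c$, which is finite by hypothesis.

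Next I would argue by contradiction, assuming $\tilde c$ is \emph{not} a critical value, i.e. $\mathcal{S}'(v)\neq 0$ whenever $\mathcal{S}(v)=\tilde c$. The crucial use of compactness is to upgrade this to a uniform bound: there exist $\varepsilon_0>0$ and $\delta>0$ with $\|\mathcal{S}'(v)\|\ge\delta$ for all $v$ such that $|\mathcal{S}(v)-\tilde c|\le 2\varepsilon_0$. Otherwise one finds a sequence $v_n$ with $\mathcal{S}(v_n)\to\tilde c$ and $\mathcal{S}'(v_n)\to 0$; the $(PS)_{\tilde c}$ hypothesis produces a subsequence converging to some $\rho$ with $\mathcal{S}(\rho)=\tilde c$ and, by continuity of $\mathcal{S}'$, $\mathcal{S}'(\rho)=0$ — a critical point at level $\tilde c$, contradiction. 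After shrinking $\varepsilon_0$ I may also assume $\mathcal{S}(0),\mathcal{S}(e)<\tilde c-2\varepsilon_0$, using the preliminary step.

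With the uniform lower bound $\delta$ available on the slab $A=\{\,|\mathcal{S}-\tilde c|\le 2\varepsilon_0\,\}$, I would invoke the deformation lemma. Since $\mathcal{X}$ is a Hilbert space and $\mathcal{S}\in C^1$, one constructs on the set of regular points a locally Lipschitz pseudo-gradient field $V$ with $\|V(v)\|\le 2\|\mathcal{S}'(v)\|$ and $\langle\mathcal{S}'(v),V(v)\rangle\ge\|\mathcal{S}'(v)\|^{2}$ (via a locally finite partition of unity), multiplies it by a Lipschitz cutoff equal to $1$ on $\{\,|\mathcal{S}-\tilde c|\le\varepsilon\,\}$ and $0$ off $A$ for some small $0<\varepsilon<\varepsilon_0$, and integrates the bounded ODE $\dot\sigma=-\chi(\sigma)\,V(\sigma)/\|V(\sigma)\|^{2}$ to obtain a flow $\eta:[0,1]\times\mathcal{X}\to\mathcal{X}$ of homeomorphisms. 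Along $\eta$ the value $\mathcal{S}$ is non-increasing, $\eta(1,\cdot)$ is the identity outside $A$, and a flow of unit duration pushes the sublevel set $\{\mathcal{S}\le\tilde c+\varepsilon\}$ into $\{\mathcal{S}\le\tilde c-\varepsilon\}$ once $\varepsilon$ is chosen small compared with $\delta^{2}$, because on the slab $\mathcal{S}$ decreases at rate $\langle\mathcal{S}',\dot\sigma\rangle\le-1$.

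To finish, choose $\gamma\in\Gamma$ with $\max_\tau\mathcal{S}(\gamma(\tau))\le\tilde c+\varepsilon$ (possible since $\tilde c$ is the infimum and is finite) and set $\tilde\gamma(\tau)=\eta(1,\gamma(\tau))$. As $\mathcal{S}(0),\mathcal{S}(e)<\tilde c-2\varepsilon_0$, the endpoints are fixed by $\eta(1,\cdot)$, so $\tilde\gamma\in\Gamma$; yet $\max_\tau\mathcal{S}(\tilde\gamma(\tau))\le\tilde c-\varepsilon<\tilde c$, contradicting the minimax definition of $\tilde c$. Therefore $\tilde c$ is a critical value: there is $\rho\in\mathcal{X}$ with $\mathcal{S}(\rho)=\tilde c$ and $\mathcal{S}'(\rho)=0$, and $\rho\neq 0$ since $\mathcal{S}(0)<\tilde c=\mathcal{S}(\rho)$. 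I expect the main obstacle to be the deformation step itself — building the pseudo-gradient field on the (non-closed) manifold of regular points and tracking the ODE estimates so that the slab of width $2\varepsilon$ is actually traversed; it is exactly here that $(PS)_{\tilde c}$ is indispensable, since the uniform bound $\delta$ is what prevents the downward flow from stalling. I would also mention the streamlined alternative in which the deformation lemma is replaced by Ekeland's variational principle applied to $\Psi(\gamma)=\max_\tau\mathcal{S}(\gamma(\tau))$ on the complete metric space $\Gamma$, which yields directly a Palais–Smale sequence at level $\tilde c$ whose limit, furnished by $(PS)_{\tilde c}$, is the desired critical point.
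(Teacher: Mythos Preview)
Your sketch is correct and is precisely the classical Ambrosetti--Rabinowitz argument (deformation lemma driven by a pseudo-gradient flow, with $(PS)_{\tilde c}$ supplying the uniform gradient lower bound on the slab), together with the standard Ekeland alternative. The paper itself does not prove this theorem at all: its proof consists solely of the citation ``See \cite[Theorem 2.1]{ambrosetti} and \cite[Theorem 2.10]{willem}.'' So your proposal goes well beyond what the paper provides, and what you have written is exactly the content of those references.
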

 \begin{proof}
 	See \cite[Theorem 2.1]{ambrosetti} and \cite[Theorem 2.10]{willem}.
 \end{proof}
 
Based on the arguments presented in \cite{lecoz} and \cite{willem}, we first establish that the functional $\mathcal{S}_\mathtt{w}$ has the mountain pass geometry, that is, the condition in \eqref{hip.Gordon} is satisfied, $\Gamma \neq \emptyset$ and $0<\tilde{c}<+\infty$. After that, we ensure that the Palais–Smale condition as in Definition $\ref{PSc}$ is satisfied. To finish, we can apply the mountain pass theorem to conclude the desired result. 
 
 \begin{lemma}\label{lem.montain}
Let $\mathtt{w}\in(0,1)$ be fixed and  consider  $L\in(0,2\pi \sqrt{\mathtt{w}})$. The functional $\mathcal{S}_\mathtt{w}$ defined in \eqref{functionalS} has the mountain pass geometry. 
 \end{lemma}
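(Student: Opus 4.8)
\medskip

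The plan is to verify the two structural requirements of Theorem \ref{teo.Gordon} separately: first the local geometry near the origin, i.e. the existence of $r_e>0$ with $\inf_{\|v\|=r_e}\mathcal{S}_\mathtt{w}(v)>0=\mathcal{S}_\mathtt{w}(0)$, and then the existence of a far-away point $e$ with $\|e\|>r_e$ and $\mathcal{S}_\mathtt{w}(e)\le 0$, which simultaneously guarantees $\Gamma\neq\emptyset$ and $0<\tilde c<+\infty$. For the local part I would work on the sphere $\|v\|_{H^1_{per,m}}=r_e$ and Taylor-expand the nonlinearity: since $\cosh(v)-1=\tfrac{v^2}{2}+\mathcal{O}(v^4)$, one has
$$
\mathcal{S}_\mathtt{w}(v)=\frac12\|v'\|_{L^2_{per}}^2-\frac{1}{2\mathtt{w}}\|v\|_{L^2_{per}}^2-\frac{1}{\mathtt{w}}\int_0^L\mathcal{O}(v^4)\,dx.
$$
On $H^1_{per,m}$ the Poincaré–Wirtinger inequality gives $\|v\|_{L^2_{per}}^2\le\bigl(\tfrac{L}{2\pi}\bigr)^2\|v'\|_{L^2_{per}}^2$, so the quadratic part is bounded below by $\tfrac12\bigl(1-\tfrac{1}{\mathtt{w}}\bigl(\tfrac{L}{2\pi}\bigr)^2\bigr)\|v'\|_{L^2_{per}}^2$, and the hypothesis $L<2\pi\sqrt{\mathtt{w}}$ makes the constant $1-\tfrac{L^2}{4\pi^2\mathtt{w}}$ strictly positive. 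The higher-order remainder is controlled via $H^1_{per,m}\hookrightarrow L^\infty_{per}$ (so $\cosh$ of $v$ is uniformly bounded on small balls and the quartic tail is $O(\|v\|_{H^1_{per,m}}^4)$); choosing $r_e$ small enough the positive quadratic term dominates, yielding $\mathcal{S}_\mathtt{w}(v)\ge\delta>0$ on that sphere.

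\medskip

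For the second requirement I would exhibit $e$ explicitly as a large constant rescaling along a fixed direction. Take any $\psi\in H^1_{per,m}\setminus\{0\}$ and set $e=t\psi$ for $t>0$ large; then
$$
\mathcal{S}_\mathtt{w}(t\psi)=\frac{t^2}{2}\|\psi'\|_{L^2_{per}}^2-\frac{1}{\mathtt{w}}\int_0^L\bigl(\cosh(t\psi)-1\bigr)\,dx,
$$
and since $\cosh$ grows exponentially while the gradient term grows only quadratically, the negative term eventually dominates and $\mathcal{S}_\mathtt{w}(t\psi)\to-\infty$ as $t\to+\infty$ (this is exactly the unboundedness-from-below already observed in Remark \ref{boundedba}). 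Hence for $t$ large we get $\|t\psi\|_{H^1_{per,m}}>r_e$ and $\mathcal{S}_\mathtt{w}(t\psi)<0=\mathcal{S}_\mathtt{w}(0)$; fix such an $e$. The straight-line path $\gamma(\tau)=\tau e$ then lies in $\Gamma$, so $\Gamma\neq\emptyset$; moreover every admissible path must cross the sphere $\|v\|=r_e$, so $\max_{\tau}\mathcal{S}_\mathtt{w}(\gamma(\tau))\ge\delta>0$, giving $\tilde c\ge\delta>0$, while $\tilde c<+\infty$ because $\gamma(\tau)=\tau e$ is a competitor of finite maximal value on a compact interval. This establishes \eqref{hip.Gordon}, \eqref{tilde{c}}, and the mountain pass geometry.

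\medskip

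The main obstacle, and the only place where the exponential nonlinearity of $\sinh$/$\cosh$ genuinely intervenes, is the quartic remainder estimate in the local step: one must be careful that the $O(v^4)$ tail of $\cosh$ is uniformly integrable on balls of $H^1_{per,m}$ and that the implied constant does not spoil the sign coming from the Poincaré gap $1-\tfrac{L^2}{4\pi^2\mathtt{w}}$. This is handled cleanly by writing $\cosh(v)-1-\tfrac{v^2}{2}=\int_0^v(v-s)\bigl(\cosh(s)-1\bigr)\,ds$ and bounding $\cosh(s)-1\le\tfrac{s^2}{2}\cosh(\|v\|_{L^\infty_{per}})$ for $|s|\le\|v\|_{L^\infty_{per}}$, which produces a bound of the form $\bigl|\int_0^L(\cosh(v)-1-\tfrac{v^2}{2})\,dx\bigr|\le C(\|v\|_{H^1_{per,m}})\,\|v\|_{L^2_{per}}^2\,\|v\|_{L^\infty_{per}}^2$ with $C$ continuous and nondecreasing; since this is $o(\|v'\|_{L^2_{per}}^2)$ as $\|v\|_{H^1_{per,m}}\to0$, the coercivity on a small sphere follows. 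Everything else (continuity/$C^1$ regularity of $\mathcal{S}_\mathtt{w}$, which was already computed in \eqref{functionalS.eq3}, and the crossing argument for $\Gamma$) is routine.
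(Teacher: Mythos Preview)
Your proof is correct and follows essentially the same route as the paper: Taylor-expand $\cosh(v)-1$ to isolate the quadratic part, use Poincar\'e--Wirtinger together with the hypothesis $L<2\pi\sqrt{\mathtt{w}}$ to make that quadratic form coercive on $H^1_{per,m}$, control the quartic remainder by a Sobolev embedding so that it is $o(\|v'\|_{L^2}^2)$ on small spheres, and then obtain the far point $e$ by scaling a fixed direction and invoking the exponential growth of $\cosh$. The only cosmetic difference is that the paper bounds the Lagrange remainder via $H^1_{per,m}\hookrightarrow L^4_{per}$, whereas you use the integral remainder and $H^1_{per,m}\hookrightarrow L^\infty_{per}$; both give a bound of order $\|v\|_{H^1_{per}}^4$ and lead to the same choice of $r_e$.
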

 \begin{proof}
 	Indeed, using the Taylor formula with the Lagrange remainder to the hyperbolic cosine function around zero, we obtain a constant $\mathtt{M}(L)>0$ depending on $L$ satisfying %$|\partial_x^4\cosh(x) w^4 |\leq \mathtt{M}(L)|w|^4 $	 for all $x\in(0,L)$	
 	\begin{equation}\label{lem.montain.eq2}
 		\cosh(v)-\cosh(0)=\sinh(0) v+\frac{1}{2!} \cosh(0)v^2+\frac{1}{3!}\sinh(0)v^3+r_3(v),
 	\end{equation}where 
 	\begin{equation}\label{lem.montain.eq2'}
 		|r_3(v)|\leq \frac{\mathtt{M}(L)}{4!}|v|^4. 
 	\end{equation}
 	By the continuous embedding of $H^1_{per,m}\hookrightarrow L^4_{per.m}$, we find a constant $\mathtt{M}_0(L)>0$ such that 
 	\begin{equation}\label{lem.montain.eq3}
 		\|v\|_{L^4_{per}}\leq\mathtt{M}_0(L)  	\|v\|_{H^1_{per}}. 
 	\end{equation}  	 	
 	In addition, it follows from the Poincaré–Wirtinger inequality in $H_{per,m}^1$ that
 	\begin{align}  \label{PWineq}
 		\|v\|^2_{L^2_{per}}\leq 	 \biggl(\frac{L}{2\pi}\biggl)^2\|v\|^2_{H^1_{per}}.
 	\end{align} 
 	Substituting \eqref{lem.montain.eq2}-\eqref{PWineq} into \eqref{functionalS}, we obtain, for every $v\in H^1_{per,m}$,
 	\begin{equation}\label{lem.montain.eq4}\begin{array}{llllllll}
 		\mathcal{S}_\mathtt{w}(v)&=&\displaystyle \frac{1}{2} \|v\|^2_{H^1_{per}} -\frac{1}{\mathtt{w}}\int_{0}^{L} [\cosh(v) -1] dx  \\\\ &= & \displaystyle \frac{1}{2} \|v\|^2_{H^1_{per}} -\frac{1}{\mathtt{w}}\int_{0}^{L} \left|\frac{v^2}{2}+r_3(v)\right| dx \\\\ 
 		&\ge &\displaystyle  \frac{1}{2} \|v\|^2_{H^1_{per}} -\frac{1}{2\mathtt{w}}\|v\|^2_{L^2_{per}}  - \frac{\mathtt{M}(L)}{24\mathtt{w}}\|v\|^4_{L^4_{per}} \\\\&\ge &\displaystyle  \frac{1}{2} \|v\|^2_{H^1_{per}} -\frac{1}{2\mathtt{w}}	 \biggl(\frac{L}{2\pi}\biggl)^2\|v\|^2_{H^1_{per}}  - \frac{\mathtt{M}(L)\mathtt{M}_0^4(L)}{24\mathtt{w}}\|v\|^4_{H^1_{per}} \\\\&\ge & \displaystyle \frac{1}{2}\left[1 - \frac{1}{\mathtt{w}}\biggl(\frac{L}{2\pi}\biggl)^2	 \right]\|v\|^2_{H^1_{per}}  - \frac{\mathtt{M}(L)\mathtt{M}_0^4(L)}{24\mathtt{w}}\|v\|^4_{H^1_{per}}.
 	\end{array}\end{equation}
 	Thus, since $L\in(0,2\pi \sqrt{\mathtt{w}})$, we can assert the existence of $\varepsilon>0$ small enough such that
 	\begin{equation}
 		0<	\varepsilon<\sqrt{\frac{12\mathtt{w}}{\mathtt{M}(L)\mathtt{M}_0^4(L)}\left[1 - \frac{1}{\mathtt{w}}\biggl(\frac{L}{2\pi}\biggl)^2	 \right]}.
 	\end{equation}
 	\indent Therefore, for any $ v\in H^1_{per,m}$ satisfying $\|v\| _{H^1_{per}}=\varepsilon$, we have from \eqref{lem.montain.eq4} that $	\mathcal{S}_\mathtt{w}(v) > 0$. This means that 
 	\begin{equation}\label{lem.montain.eq5}
 		\displaystyle	\inf_{\|v\|_{H^1_{per}}=\varepsilon} \mathcal{S}_\mathtt{w}(v)>0=\mathcal{S}_\mathtt{w}(0).
 	\end{equation}
 	In addition, for any $v\in H^1_{per,m}$ and $\tau>0$, 
 	\begin{equation}\label{lem.montain.eq1}
 		\mathcal{S}_\mathtt{w} (\tau v)=\frac{\tau^2}{2} \|v\|^2_{H^1_{per}}+\frac{L}{\mathtt{w}}-\frac{1}{\mathtt{w}}\int_{0}^{L}\cosh(\tau v)dx.
 	\end{equation}
 	Since the hyperbolic cosine function is positive and exhibits exponential-type growth, it follows from \eqref{lem.montain.eq1} that $	\mathcal{S}_\mathtt{w} (\tau v)<0$ if $\tau$ is large enough. Let $\tau_0>0$ be such that  $	\mathcal{S}_\mathtt{w} (\tau_0 v)<0$ and $\|\tau_0v\|_{H^1_{per}}>\varepsilon$. By defining $e=(\tau_0 v) \in H^1_{per,m}$ and $\displaystyle r_e=\varepsilon$, we prove that	   	 	\begin{equation}\label{lem.montain.eq1'} \inf_{\|v\|=r_e}\mathcal{S}_\mathtt{w}(v)>\mathcal{S}_\mathtt{w}(0)\ge \mathcal{S}_\mathtt{w}(e). 
 	\end{equation}
 	\indent Now, let us consider the mapping
 	$\gamma:[0,1]\to H^1_{per,m}$ defined as $\gamma(\tau)=\tau_0 \tau v$. It follows that $\gamma\in C([0,1], H^1_{per,m})$, $\gamma(0)=0$ and $\gamma(1)=e$, proving that $\Gamma$ is nonempty. 	   
 	This implies that for any $\gamma\in \Gamma$, we have $\|\gamma(1)\|_{H^1_{per}} >r_e>0=\|\gamma(0)\|_{H^1_{per}}$. By continuity of $\gamma$, there exists $\tau _\gamma\in (0,1)$ such that $\|\gamma(\tau_ \gamma)\|_{H^1_{per}} =r_e$. Therefore, from \eqref{lem.montain.eq5} we obtain
 	\begin{equation}
 		\max_{\tau\in [0,1]}\mathcal{S}_\mathtt{w}(\gamma(\tau))\ge\mathcal{S}_\mathtt{w}(\gamma(\tau_\gamma))>0, \quad \gamma\in \Gamma,
 	\end{equation}consequently, the mountain pass level satisfies
 	\begin{align}\label{level}
 		\tilde{c}=\inf_{\gamma\in\Gamma}\max_{\tau\in[0,1]} \mathcal{S}_\mathtt{w}(\gamma(\tau))>0.
 	\end{align}
 	Moreover,
 	\begin{equation}\label{lem.montain.eq6} \inf_{\|v\|=r_e}\mathcal{S}_\mathtt{w}(v)\leq\mathcal{S}_\mathtt{w}(\gamma(\tau_\gamma))\leq \max_{\tau\in[0,1]} \mathcal{S}_\mathtt{w}(\gamma(\tau)), \quad\gamma\in \Gamma,\nonumber
 	\end{equation}and by the definition of the infimum, we obtain
 	\begin{equation}\label{lem.montain.eq7} 0<\inf_{\|v\|=r_e}\mathcal{S}_\mathtt{w}(v) \leq \inf_{\gamma\in\Gamma}\max_{\tau\in[0,1]} \mathcal{S}_\mathtt{w}(\gamma(\tau)) =\tilde{c}.
 	\end{equation}In particular, considering the path $\gamma_0(\tau)=\tau e \in \Gamma$ 		  
 	in \eqref{lem.montain.eq7} and using \eqref{lem.montain.eq1}, it follows that
 	\begin{equation}\label{lem.montain.eq8} 0<\inf_{\|v\|=r_e}\mathcal{S}_\mathtt{w}(v) \leq\tilde{c}\leq     \max_{\tau\in[0,1]}\mathcal{S}_\mathtt{w}(\gamma_0(\tau)) =\max_{\tau\in[0,1]}\mathcal{S}_\mathtt{w}(\tau e)\ \leq \frac{1}{2}\|e\|^2_{H^1_{per}}+\frac{L}{\mathtt{w}}<+\infty,
 	\end{equation}
 	 that is, $\mathcal{S}_\mathtt{w}$ has a mountain pass geometry. \end{proof}
 
 \begin{prop}\label{prop.Gordon} Let $\mathtt{w}\in(0,1)$ be fixed and  consider  $L\in(0,2\pi \sqrt{\mathtt{w}})$. Then, the mountain pass level $\tilde{c}$ in \eqref{level} is a critical value of the functional $\mathcal{S}_\mathtt{w}$ defined in \eqref{functionalS}, that is, there exists $\rho\in H^1_{per,m}$ such that $\mathcal{S}_\mathtt{w}(\rho)=\tilde{c}$ and $\mathcal{S}_\mathtt{w}'(\rho)=0$ in $H^{-1}_{per}$. 	 
 \end{prop}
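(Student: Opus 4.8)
The plan is to deduce the result directly from the mountain pass theorem (Theorem~\ref{teo.Gordon}), applied with $\mathcal{X}=H^1_{per,m}$ and $\mathcal{S}=\mathcal{S}_\mathtt{w}$. Lemma~\ref{lem.montain} already supplies, for $L\in(0,2\pi\sqrt{\mathtt{w}})$, all the geometric ingredients: a pair $(e,r_e)$ satisfying \eqref{hip.Gordon}, the nonemptiness of $\Gamma$, and the level $\tilde{c}\in(0,+\infty)$ as in \eqref{level}. Hence the only point left to check is the Palais--Smale condition $(PS)_{\tilde{c}}$ of Definition~\ref{PSc}; once it is verified, Theorem~\ref{teo.Gordon} produces $\rho\in H^1_{per,m}$ with $\mathcal{S}_\mathtt{w}(\rho)=\tilde{c}$ and $\mathcal{S}_\mathtt{w}'(\rho)=0$ in $H^{-1}_{per,m}$, and since $\tilde{c}>0=\mathcal{S}_\mathtt{w}(0)$ the critical point $\rho$ is automatically nontrivial.

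So let $(v_n)\subset H^1_{per,m}$ satisfy $\mathcal{S}_\mathtt{w}(v_n)\to\tilde{c}$ and $\mathcal{S}_\mathtt{w}'(v_n)\to 0$ in $H^{-1}_{per,m}$. The first and main step is to show that $(v_n)$ is bounded in $H^1_{per,m}$. The idea is to exploit the genuine superlinearity of $\sinh$: since $x\sinh x\sim|x|e^{|x|}/2$ while $\cosh x-1\sim e^{|x|}/2$ as $|x|\to\infty$, the quotient $x\sinh x/(\cosh x-1)$ diverges, so fixing any $\mu>2$ the function $g(x):=\tfrac1\mu x\sinh x-(\cosh x-1)$ is continuous, vanishes at $0$, and tends to $+\infty$ as $|x|\to\infty$, hence is bounded below on $\mathbb{R}$ by some $-C_0\le 0$. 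Using \eqref{functionalS} and \eqref{functionalS.eq3}, a direct computation gives
\[
\mathcal{S}_\mathtt{w}(v_n)-\tfrac1\mu\,\mathcal{S}_\mathtt{w}'(v_n)v_n
=\Bigl(\tfrac12-\tfrac1\mu\Bigr)\|v_n'\|_{L^2_{per}}^2+\frac{1}{\mathtt{w}}\int_0^L g(v_n)\,dx
\ge\Bigl(\tfrac12-\tfrac1\mu\Bigr)\|v_n'\|_{L^2_{per}}^2-\frac{C_0L}{\mathtt{w}}.
\]
On the other hand, for $n$ large the left-hand side is at most $\tilde{c}+1+\tfrac1\mu\|\mathcal{S}_\mathtt{w}'(v_n)\|_{H^{-1}_{per,m}}\|v_n\|_{H^1_{per}}$, with $\|\mathcal{S}_\mathtt{w}'(v_n)\|_{H^{-1}_{per,m}}\to 0$. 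Combining the two bounds with the Poincar\'e--Wirtinger inequality \eqref{PWineq}, which on $H^1_{per,m}$ gives $\|v_n\|_{H^1_{per}}^2\le\bigl(1+(L/2\pi)^2\bigr)\|v_n'\|_{L^2_{per}}^2$, produces an inequality of the form $c_1\|v_n\|_{H^1_{per}}^2\le C+\varepsilon_n\|v_n\|_{H^1_{per}}$ with $c_1>0$ and $\varepsilon_n\to 0$, whence $(v_n)$ is bounded.

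With boundedness in hand, I would pass to a subsequence (not relabeled) with $v_n\rightharpoonup v$ weakly in $H^1_{per,m}$; by the compact Sobolev embedding a subsequence converges in $C_{per}([0,L])$, hence $v_n\to v$ in $L^\infty_{per}$, so $M_\infty:=\sup_n\|v_n\|_{L^\infty_{per}}<+\infty$. To upgrade to strong convergence in $H^1_{per,m}$, observe from \eqref{functionalS.eq3} that
\[
\mathcal{S}_\mathtt{w}'(v_n)(v_n-v)=\int_0^L v_n'(v_n'-v')\,dx-\frac1{\mathtt{w}}\int_0^L\sinh(v_n)(v_n-v)\,dx.
\]
The left-hand side tends to $0$ because $\mathcal{S}_\mathtt{w}'(v_n)\to 0$ in $H^{-1}_{per,m}$ and $(v_n-v)$ is bounded in $H^1_{per,m}$; the last integral tends to $0$ since $|\sinh(v_n)|\le\sinh(M_\infty)$ uniformly and $v_n-v\to 0$ in $L^1_{per}$; and $\int_0^L v'(v_n'-v')\,dx\to 0$ by the weak convergence $v_n'\rightharpoonup v'$ in $L^2_{per}$. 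Subtracting, $\|v_n'-v'\|_{L^2_{per}}^2\to 0$, and Poincar\'e--Wirtinger then gives $v_n\to v$ strongly in $H^1_{per,m}$. This verifies $(PS)_{\tilde{c}}$ and, as noted above, completes the proof.

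The step I expect to be the main obstacle is the boundedness of the Palais--Smale sequence, precisely because the nonlinearity grows exponentially. The observation that makes it work is that, even though the naive Ambrosetti--Rabinowitz inequality $x\sinh x\ge 2(\cosh x-1)$ holds only with the critical exponent $2$, the ratio $x\sinh x/(\cosh x-1)$ in fact blows up, which is exactly what is needed to absorb the potential term into the kinetic term in the estimate above. The subsequent compactness arguments are then routine, the crucial (and, in the one-dimensional periodic setting, automatic) point being the compact embedding $H^1_{per}\hookrightarrow\hookrightarrow C_{per}([0,L])$, which makes $\sinh(v_n)$ equibounded and the nonlinear term continuous along the sequence.
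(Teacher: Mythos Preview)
Your proof is correct and follows the same overall route as the paper (apply Theorem~\ref{teo.Gordon} after invoking the mountain-pass geometry from Lemma~\ref{lem.montain}), but you supply substantially more detail than the paper does. The paper's own proof is three lines: it cites Lemma~\ref{lem.montain} for the geometry, invokes \cite[Theorem~2.9]{willem} to produce a Palais--Smale sequence at the level $\tilde c$, and then appeals to Theorem~\ref{teo.Gordon}; the actual verification of the $(PS)_{\tilde c}$ condition is left implicit. You, by contrast, carry that verification out in full: boundedness of the PS sequence via the superlinearity observation $x\sinh x/(\cosh x-1)\to\infty$ (which lets you run an Ambrosetti--Rabinowitz-type estimate with some $\mu>2$), and then strong convergence via the compact embedding $H^1_{per}\hookrightarrow C_{per}([0,L])$ and the usual splitting of $\mathcal S_\mathtt{w}'(v_n)(v_n-v)$. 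Both steps are correct; the boundedness argument in particular is the right way to handle the exponential nonlinearity and makes explicit what the paper takes for granted. One cosmetic remark: in the paper's convention (see \eqref{lem.montain.eq4}) the working norm on $H^1_{per,m}$ is already $\|v\|_{H^1_{per}}=\|v'\|_{L^2_{per}}$, so your Poincar\'e--Wirtinger step is simply the equivalence of this seminorm with the full $H^1$ norm on the zero-mean subspace; nothing changes in the argument.
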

 \begin{proof}
 	In fact, by Lemma $\ref{lem.montain}$, we see that $\mathcal{S}_\mathtt{w}$ has a mountain pass geometry. Using \cite[Theorem 2.9]{willem}, we immediately obtain the existence of a Palais-Smale sequence at the mountain pass level. Therefore, we can apply Theorem \ref{teo.Gordon} in order to conclude the existence of a nontrivial critical value of $\mathcal{S}_\mathtt{w}$, as desired.
 \end{proof}
 
 \begin{remark} It is important to mention that $\rho$ is smooth and solves equation $(\ref{travSG3})$ in the classical sense. Indeed, since $\rho \in H_{per,m}^1$, we obtain that $\sinh(\rho)\in H_{per}^1$, ensuring that equation $(\ref{travSG3})$ holds in the classical sense rather than merely in the weak one.
\end{remark}
\begin{remark}The mountain pass theorem used to prove the existence of solutions 
$\rho$ to equation 
$(\ref{travSG3})$ is, as is well known, a nonstandard technique for obtaining critical points that are indeed solutions of elliptic equations. This method appears to be particularly useful for guaranteeing the existence of such critical points, in contrast with other variational techniques, where one typically seeks suitable minimizers of (a part of) the energy subject to constraints. As far as we can see, the strong nonlinearity $\sinh(\rho)$ in $(\ref{travSG3})$ prevents the use of standard variational techniques to obtain constrained minimizers that solve nonlinear equations. The method developed in this work is potentially applicable to other related equations.
\end{remark}
 %	From Proposition \ref{prop.mountain} we obtain a function $\rho\in H^1_{per,m}$ such that $\mathcal{S}_\mathtt{w}'(\rho)=0$ in $H^{-1}_{per}([0,L])$, that is, for all $\mathsf{f}\in H^1_0([0,L])$
 %	\begin{equation}
 %		0 	 = (\mathcal{S}_\mathtt{w}'(\rho), \mathsf{f})_{H^{-1}_{per,m},H_{per,m}^1}=\int_{0}^{L}(\rho'\mathsf{f}' -\frac{1}{\mathtt{w}}\sinh(\rho)\mathsf{f})  dx.
 %	\end{equation} 
 %	Therefore, we prove that $\rho$ is a (\textcolor{red}{global}) weak solution of the equation \eqref{travSG3} if $L\in (0,2\pi\sqrt{\mathtt{w}})$.
 
 \subsection{Existence of explicit smooth solutions} \label{sec.exp.sol}
Our intention is to proceed in a different way, by obtaining periodic solutions of equation~\eqref{travSG3}. In the previous subsection, we fixed $\mathtt{w} \in (0,1)$ and considered $L$ depending on $\mathtt{w}$ over the interval $(0,2\pi \sqrt{\mathtt{w}})$, so that $\rho$ is a weak solution of~\eqref{travSG3}, characterized as a critical point of the functional $\mathcal{S}_{\mathtt{w}}$ in~\eqref{functionalS}. In this subsection, our approach is different: for a fixed period $L \in (0,2\pi)$, we consider periodic waves $\varphi$ that depend smoothly on $\omega \in (0,1)$ and solve \eqref{travSG3}. 

In fact, substituting the traveling wave solution of the form $u(x,t)=\varphi_c(x-ct)$ into $(\ref{KF2})$ for $\alpha=-1$, one has
 \begin{equation}
 	c^2\varphi_c''-\varphi_c''-\sinh(\varphi_c)=0.
 	\label{travSG}\end{equation} By considering $\omega=\omega(c)=1-c^2\in (0,1)$, we obtain from $(\ref{travSG})$ that  
 	\begin{equation}\label{travSG1}
 	\omega \varphi_\omega''+\sinh(\varphi_\omega)=0.
 \end{equation}
 \begin{remark}\label{comega} It is important to highlighted that the case $\omega=1$, that is, $c=0$, can be considered in our analysis in the limit sense as $\displaystyle\lim_{c\rightarrow 0}\omega(c)=1$. Since our construction of smooth periodic waves depending on $\omega$ are based on the implicit function theorem, we need to restrict ourselves the analysis in open sets.
 \end{remark}
  We henceforth write $\varphi_\omega=\varphi$ for simplicity. First, multiplying the equation \eqref{travSG1} by $\varphi'$ and integrating the result, we deduce the differential equation to the following quadrature form:
 \begin{equation}\label{travSG2}
 	( \varphi')^2= \frac{2}{\omega}[a-\cosh(\varphi)], 
 \end{equation}
 where $a\in \mathbb{R}$ is a constant of integration. 
 
 Now, let us consider the ansatz given in terms of the hyperbolic tangent function and the Jacobi elliptic function as
 \begin{equation}\label{ansatz}
 	\varphi(x)  =2\operatorname{arctanh}(k \sn (bx ;k ) ),
 \end{equation}
 where $k\in (0,1)$ is the modulus of a snoidal-type elliptic function and $b\in\mathbb{R}$ is a constant to be determined. 
 
 Employing the derivatives $\displaystyle\frac{d}{du}\operatorname{arctanh}(u)=\frac{1}{1-u^2}$ and $\displaystyle\frac{d}{du}\sn(u;k)=\cn(u;k)\dn(u;k)$, in combination with the identities $ \cn^2=(1-\sn^2)$ and $\dn^2=(1-k^2\sn^2)$, upon differentiating \eqref{ansatz}, we find that
 \begin{equation}\label{varphilinha}
 	\varphi'(x) =\frac{2bk}{1-k^2 \sn^2 (bx ;k )} \cn (bx ;k ) \dn (bx ;k )= 2kb\biggl[ \frac{ \cn (bx ;k )}{\dn (bx ;k )}\biggl], 
 \end{equation}which implies that 
 \begin{align}
 	\label{ansatz2}
 	[	\varphi'(x) ]^2= 4k^2b^2 \biggl[\frac{1- \sn^2 (bx ;k )}{1-k^2\sn^2 (bx ;k )}\biggl].  
 \end{align}
\indent  Next, using the expression $\displaystyle\cosh(2\operatorname{arctanh}(u))=\frac{1+u^2}{1-u^2}$, along with equations \eqref{travSG2} and \eqref{ansatz2}, we obtain
 \begin{align*}
 	4k^2b^2 \biggl[\frac{1- \sn^2 (bx ;k )}{1-k^2\sn^2 (bx ;k )}\biggl]=  \frac{2}{\omega}\biggl[a-\frac{1+k^2\sn^2(bx;k)}{1-k^2\sn^2(bx;k)}\biggl],
 \end{align*}that is, 
 \begin{align}
 	\label{ansatz4}
 	2k^2b^2 \omega[ 1- \sn^2 (bx ;k )] =   [ 1+k^2\sn^2(bx;k)]+a[1-k^2\sn^2(bx;k)]. \end{align}
 From \eqref{ansatz4} it follows that
 \begin{align}
 	\label{ansatz5}
 	[	2k^2b^2 \omega+1-a]+[a+1-	2 b^2 \omega ]k^2\sn^2 (bx ;k ) =    0. \end{align}
 According to \eqref{ansatz5}, it is reasonable to assume that $a$ and $b$ satisfy
 \begin{equation*}
 	a=2k^2b^2\omega+1 \quad \text{and} \quad b^2=\frac{1}{\omega(1-k^2)}. 
 \end{equation*}
 Considering the positive root for $b$, we obtain an explicit solution $\varphi$ for \eqref{travSG2}, and consequently to \eqref{travSG1}, given by  
 \begin{equation}	 \label{travSG13}
 	\varphi(x)  =2\operatorname{arctanh}\biggl(k \sn\biggl(\frac{x}{\sqrt{\omega(1-k^2)}} ;k\biggl)\biggl).
 \end{equation}  
 
 Furthermore, since the Jacobi elliptic function of snoidal kind has real period equal to $4K(k)$, we must have
 \begin{equation}\label{speed1}
 	b=\frac{1}{\sqrt{\omega(1-k^2)}}=	\frac{1}{\sqrt{(1-c^2)(1-k^2)}}=\frac{4K(k)}{L}.
 \end{equation}
 
 \indent At this point, it is necessary to ensure that, for fixed $L\in (0,2\pi)$, the condition $0<|c|<1$ holds, or equivalently, $0<\omega< 1 $. In fact, as the complete elliptic integral of the first kind, given by $K(k)=\displaystyle\int_{0}^{\frac{\pi}{2}}\frac{1}{\sqrt{1-k^2\sin^2(\theta)}}d\theta $, it is an increasing function satisfying $K(k)\rightarrow \frac{\pi}{2}$ when $k\rightarrow 0^+$ and $K(k)\rightarrow +\infty$ when $k\rightarrow 1^-$, % and $0<(1-k^2)K(k)\leq K(k)$, 
 it follows from the relation in \eqref{speed1}  that
 \begin{equation}\label{speed2}
 	0<\omega=1-c^2=\frac{L^2}{16K^2(k)(1-k^2)} \leq \lim_{k\rightarrow 0^+} \frac{L^2}{16K^2(k)(1-k^2)} =\frac{L^2}{4\pi^2}< 1, 
 \end{equation} 
 as desired. In addition, from $(\ref{speed1})$ we deduce that
 \begin{equation}\label{dk/dw}
 	 \frac{dk}{d\omega}=\left(\frac{d\omega}{dk}\right)^{-1}=\displaystyle\frac{8k(1-k^2)^2K(k)^3}{L^2(K(k)-E(k))}>0,
 \end{equation}
and by the inverse function theorem, we obtain that the mapping $\omega \in (0,1) \mapsto \varphi \in H_{per,m}^{\infty}$ is smooth for a fixed $L \in (0,2\pi)$.
 
 The following result can be enunciated.
 
 \begin{proposition}\label{prop.WS}
 	Let $L\in (0,2\pi)$ be fixed. There exists a smooth curve of periodic traveling wave solutions for the equation $(\ref{travSG1})$, given by
 	\begin{equation}
 		\displaystyle
 		\omega\in\left(0,1\right)
 		\mapsto \varphi\in H_{per,m}^\infty([0,L]),\label{curveeqSG1}\end{equation}
 	where
 	\begin{equation} 
 		\varphi(x)=2\operatorname{arctanh}\biggl(k \sn\biggl(\frac{4K(k)}{L} \ x ;k\biggl)\biggl).\label{existSG1}
 	\end{equation}  
 \end{proposition}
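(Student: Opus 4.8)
The statement collects the computations already performed in this subsection, so the plan is to organize them into a proof and supply the single missing ingredient, namely the smooth dependence on $\omega$. First I would verify that, for each fixed $k\in(0,1)$, the function in $(\ref{travSG13})$ satisfies the quadrature identity $(\ref{travSG2})$ — and hence, after one differentiation, equation $(\ref{travSG1})$ — precisely when the free constants are chosen as $b^{2}=\frac{1}{\omega(1-k^{2})}$ and $a=2k^{2}b^{2}\omega+1$. This is exactly the content of $(\ref{varphilinha})$–$(\ref{ansatz5})$: substituting the ansatz $(\ref{ansatz})$ into $(\ref{travSG2})$, clearing the common denominator $1-k^{2}\sn^{2}(bx;k)$, and equating the constant term and the coefficient of $\sn^{2}$ forces these two choices and no others, so that $\varphi$ in $(\ref{travSG13})$ solves $(\ref{travSG1})$ for every admissible pair $(\omega,k)$.

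Next I would impose $L$-periodicity. Since $\sn(\cdot;k)$ is odd with fundamental real period $4K(k)$, the function in $(\ref{ansatz})$ is a smooth odd $L$-periodic function exactly when $bL$ is an integer multiple of $4K(k)$; taking the minimal period gives $b=\frac{4K(k)}{L}$, and comparison with $b^{2}=\frac{1}{\omega(1-k^{2})}$ yields the relation $(\ref{speed1})$, i.e. $\omega=\omega(k)=\frac{L^{2}}{16K(k)^{2}(1-k^{2})}$. Oddness of $\sn$ and of $\operatorname{arctanh}$ forces $\int_{0}^{L}\varphi\,dx=0$, and because $|k\,\sn(\cdot;k)|\le k<1$ the composition is smooth and bounded, hence $\varphi\in H_{per,m}^{\infty}([0,L])$. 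I would then confirm $\omega(k)\in(0,1)$: positivity is immediate from $(\ref{speed1})$, while the bound $\omega(k)<1$ follows from the monotonicity of $K$ and the estimate $(\ref{speed2})$, which is the step that uses the hypothesis $L<2\pi$.

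For the smoothness of the curve I would argue that $k\mapsto\omega(k)$ is real-analytic on $(0,1)$ since $K$ is, and that by $(\ref{dk/dw})$ its derivative is strictly positive, the crucial input being $K(k)-E(k)>0$ for $k\in(0,1)$; hence $\omega\mapsto k(\omega)$ is a smooth diffeomorphism onto its image. Precomposing the map $(k,x)\mapsto 2\operatorname{arctanh}\bigl(k\,\sn(\tfrac{4K(k)}{L}x;k)\bigr)$ — smooth jointly in $(k,x)$ because $K$, $\sn$ and $\operatorname{arctanh}$ are smooth on the relevant ranges — with $\omega\mapsto k(\omega)$, and checking continuity of the resulting map into $H_{per,m}^{\infty}([0,L])$, produces the smooth curve $(\ref{curveeqSG1})$–$(\ref{existSG1})$. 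The only point that needs genuine care, rather than mechanical manipulation of Jacobi elliptic identities, is the sign in $(\ref{dk/dw})$: one must establish $K(k)>E(k)$ on $(0,1)$ (equivalently, strict monotonicity of $\omega(k)$), so that the inverse function theorem yields a global parametrization by $\omega$ and not merely a local one.
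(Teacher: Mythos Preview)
Your proposal is correct and follows essentially the same approach as the paper: the proposition is stated immediately after the computations $(\ref{varphilinha})$--$(\ref{dk/dw})$ and is given no separate proof beyond a closing $\blacksquare$, so the paper treats it as a summary of precisely the steps you outline (ansatz verification via the quadrature form, imposition of $L$-periodicity to fix $b=4K(k)/L$, the range check $(\ref{speed2})$, and the inverse function theorem applied through $(\ref{dk/dw})$). Your added remarks on zero mean, the bound $|k\,\sn|\le k<1$, and the inequality $K(k)>E(k)$ are useful clarifications but do not constitute a different route.
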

 \begin{flushright}
 	${\blacksquare}$
 \end{flushright}

 \begin{remark}\label{smoothcomega}
Let $L\in (0,2\pi)$ be fixed. Proposition $\ref{prop.WS}$ states that $\varphi$ depends smoothly on the parameter $\omega\in(0,1)$. A natural question that arises is the smooth dependence of $\varphi$ with respect to $c\in(-1,1)$, since $\omega=1-c^2$. In fact, for $c\in (-1,0)\cup(0,1)$ it is easy to see that $\varphi$ depends smoothly on $c$ because of the chain rule $\displaystyle\frac{\partial\varphi}{\partial c}=-2c\frac{\partial \varphi}{\partial\omega}$. At $c=0$, we obtain that $\displaystyle\lim_{c\rightarrow 0}\frac{\partial\varphi}{\partial c}=\lim_{c\rightarrow 0}\left(-2c\frac{\partial \varphi}{\partial\omega}\right)=0$, so that $\varphi$ is smooth at $c=0$. Therefore, $\varphi$ can be considered as a smooth function in terms of $c\in(-1,1)$ for all $L\in (0,2\pi)$ fixed.
 
 \end{remark}
% \begin{remark}
 %	The relation in \eqref{speed2} implies that, for a fixed $L \in (0,2\pi)$, the parameter $\omega$ satisfies $\omega \in (0,1)$. Roughly speaking, one has $L \in [2\pi\sqrt{\omega},\, 2\pi)$. This means that $L$ lies in the complement of the interval of existence of weak solutions established in Proposition \ref{prop.mountain}. To construct a concrete example, consider $\omega = \frac{1}{4}$. From Proposition \ref{prop.mountain}, one obtains that $L \in (0,\pi)$. On the other hand, from the equality \eqref{speed2}, it follows that
 %$ 	L = 2K(k)\sqrt{1-k^{2}},$ and thus $L \in [\pi, 2\pi)$, as required.
 	 %As far as we can see, this fact suggests that the critical point $\rho$ of the functional $\mathcal{S}_{\mathtt{w}}$ in $(\ref{functionalS})$ cannot be considered as a smooth solution, and $\rho$ is, in fact, a "hidden solution" for the equation $(\ref{travSG1})$.
 %\end{remark}
 
 \indent We now establish an important result showing that the periodic solution $\varphi$ in \eqref{existSG1} may exhibit instabilities.
 
  \begin{proposition}\label{propEphi} Let $\varphi$ be the periodic solution in $(\ref{existSG1})$. We have that $\mathcal{E}(\varphi,c\varphi')$ has indefinite sign. 
 \end{proposition}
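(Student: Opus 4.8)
The plan is to reduce $\mathcal{E}(\varphi,c\varphi')$ to an explicit function of the elliptic modulus $k$ (with the period $L$ fixed) and then to determine its sign across the admissible range of $k$. First, substituting $u_t=c\varphi'$ into \eqref{E} gives
\[
\mathcal{E}(\varphi,c\varphi')=\frac12\int_0^L\bigl[(1+c^2)(\varphi')^2-2(\cosh\varphi-1)\bigr]\,dx .
\]
Next I would invoke the quadrature identity \eqref{travSG2}, $(\varphi')^2=\frac{2}{\omega}(a-\cosh\varphi)$, with the value $a=\frac{1+k^2}{1-k^2}$ of the integration constant computed in Subsection~\ref{sec.exp.sol}, to eliminate $\cosh\varphi$ from the integrand. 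Combining this with the relation $c^2=1-\omega$ (so that $1+c^2+\omega=2$) collapses the integrand, leaving
\[
\mathcal{E}(\varphi,c\varphi')=\int_0^L(\varphi')^2\,dx-(a-1)L=\int_0^L(\varphi')^2\,dx-\frac{2k^2L}{1-k^2}.
\]

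The second step is to evaluate $\int_0^L(\varphi')^2\,dx$ in closed form from the explicit profile. By \eqref{varphilinha}, $(\varphi')^2=4k^2b^2\,\cn^2(bx;k)/\dn^2(bx;k)$ with $b=\frac{4K(k)}{L}$; after the substitution $y=bx$ and the algebraic reduction $\cn^2/\dn^2=\frac{1}{k^2}-\frac{1-k^2}{k^2}\,\dn^{-2}$, the computation reduces to the standard integral $\int_0^{K(k)}\dn^{-2}(y;k)\,dy=\frac{E(k)}{1-k^2}$ (see \cite{byrd}). This yields $\int_0^L(\varphi')^2\,dx=\frac{64\,K(k)\bigl(K(k)-E(k)\bigr)}{L}$, whence
\[
\mathcal{E}(\varphi,c\varphi')=\frac{64\,K(k)\bigl(K(k)-E(k)\bigr)}{L}-\frac{2k^2L}{1-k^2}.
\]
If convenient, $L$ may be eliminated using \eqref{speed1} to rewrite this purely in terms of $k$ and $\omega$.

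The last and decisive step is the sign analysis of the one–parameter function $k\mapsto\mathcal{E}(\varphi,c\varphi')$ over the interval of moduli cut out, for the fixed $L\in(0,2\pi)$, by the constraint $0<\omega<1$. I would compare the two competing terms in the extreme regimes: as $k\to0^{+}$, the expansions $K(k)\to\frac{\pi}{2}$ and $K(k)-E(k)\sim\frac{\pi}{4}k^2$ give $\mathcal{E}(\varphi,c\varphi')\sim\frac{2k^2}{L}(4\pi^2-L^2)$, whose sign is pinned down by $L<2\pi$; at the opposite end one uses $K(k)\sim\ln\!\big(4/\sqrt{1-k^2}\big)$, $E(k)\to1$ and the faster polynomial blow-up of $(1-k^2)^{-1}$ to weigh the two terms against each other, after which continuity and the intermediate value theorem deliver the change of sign. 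I expect this to be the real obstacle: over most of the admissible range the terms $\frac{64}{L}K(K-E)$ and $\frac{2k^2L}{1-k^2}$ are of comparable magnitude, so a sharp monotonicity/comparison estimate for the quotient $K(k)\bigl(K(k)-E(k)\bigr)(1-k^2)/k^2$ is needed rather than crude bounds, and one must track precisely which subinterval of $(0,1)$ the modulus $k$ sweeps out for each $L$.
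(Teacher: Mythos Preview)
Your derivation of the closed form is correct and coincides with the paper's expression $\tfrac{64}{L}\bigl[K(k)^2-K(k)E(k)-\tfrac{L^2k^2}{32(1-k^2)}\bigr]$, but by a shorter route: the paper substitutes the quadrature and then evaluates $\int_0^L\cosh\varphi\,dx$ through $\cosh(2\operatorname{arctanh}u)=\tfrac{1+u^2}{1-u^2}$ and the integral $\int_0^{4K}\dn^{-2}(y;k)\,dy=\tfrac{4E(k)}{1-k^2}$, whereas you use the quadrature to eliminate $\cosh\varphi$ entirely in favour of $\int_0^L(\varphi')^2\,dx$ --- exactly the integral the paper itself computes later in~\eqref{intvarphilinha}.

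The sign analysis is where the two arguments diverge. The paper does not fix $L$; it treats $\mathcal{E}$ as a function of the pair $(k,L)$, notes positivity for small $L$, and for the opposite sign appeals to a plot over $(k,L)\in(0,1)\times(0,2\pi)$ (Figure~\ref{fig1}) rather than to an analytic estimate. Your endpoint asymptotics are sound provided $k$ is allowed to sweep all of $(0,1)$: the $k\to0^+$ expansion gives the positive sign you state, and as $k\to1^-$ the term $\tfrac{2k^2L}{1-k^2}$ diverges like $(1-k^2)^{-1}$ while $\tfrac{64}{L}K(K-E)$ grows only like $\bigl(\log\tfrac{1}{1-k^2}\bigr)^2$, so $\mathcal{E}\to-\infty$ and the intermediate value theorem gives the sign change. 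Your closing caveat about the subinterval of moduli cut out by $0<\omega<1$ is actually more scrupulous than the paper's own treatment, which here and in Theorem~\ref{stabthm} lets $k$ range over the full interval $(0,1)$; under that convention your argument goes through without the obstacle you anticipate.
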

 \begin{proof}	In fact, from \eqref{travSG2} we have the quadrature form
 	\begin{equation}\label{quadra1}
 		( \varphi')^2= \frac{2}{\omega}[a-\cosh(\varphi)], 
 	\end{equation}
 	where $a=\displaystyle\frac{k^2+1}{1-k^2}$ and $k\in (0,1)$. Thus, by \eqref{quadra1}, it follows that 	
 	 	\begin{equation}\begin{array}{lllll} 
 			\mathcal{E}(\varphi, c\varphi')
 			&=&\displaystyle\frac{1}{2}\int_{0}^{L} \left[(\varphi ')^2+(c\varphi')^2-2(\cosh(\varphi)-1)\right] dx\\\\
 			&= & \displaystyle\frac{1}{2}\int_{0}^{L} \left[(1+c^2)\frac{2}{\omega}[a-\cosh(\varphi)]-2(\cosh(\varphi)-1)\right] dx\\\\
 			&=& \displaystyle \int_{0}^{L} \left\{\left[1+(1+c^2)\frac{1}{\omega}a\right]-\left[ 1+(1+c^2)\frac{1}{\omega}\right]\cosh(\varphi) \right\}dx\\\\
 			& = & \displaystyle \int_{0}^{L} \left\{\left[1+ \frac{(1+c^2)(1+k^2)}{(1-c^2)(1-k^2)}\right]-\left[1+\frac{1+c^2}{1-c^2}\right]\cosh(\varphi) \right\} dx.
 	\end{array}\end{equation}
 	Using the solution in \eqref{ansatz} and the identity $\displaystyle\cosh(2\operatorname{arctanh}(u))=\frac{1+u^2}{1-u^2}$, we obtain that 
 	\begin{align} \label{blowup5}
 		\mathcal{E}(\varphi, c\varphi')
 		=&  \left( 1+ \frac{(1+c^2)(1+k^2)}{(1-c^2)(1-k^2)}\right) L -\left(1+\frac{1+c^2}{1-c^2}\right)\int_{0}^{L} \frac{1+k ^2\sn (bx ;k )^2}{1-k^2 \sn (bx ;k )^2}  dx ,
 	\end{align}
 	where $b^2=\displaystyle\frac{1}{\omega(1-k^2)}$.   We can rewrite the integral in \eqref{blowup5} as
 	\begin{align} \label{blowup6}
 		\int_{0}^{L} \frac{1+k ^2\sn (bx ;k )^2}{1-k^2 \sn (bx ;k )^2}  dx =	\int_{0}^{L} \frac{2-\dn (bx ;k )^2}{ \dn (bx ;k )^2}  dx =-L+\frac{2}{b}	\int_{0}^{bL} \frac{1}{ \dn (u ;k )^2}  du .
 	\end{align} From \cite[Formula 315.02]{byrd} it follows that	
 	\begin{equation*}
 		\int \frac{1}{\operatorname{dn}^2( u)} \, du 
 		=  \frac{1}{1-k^2} \left[ E(u) - k^2 \,  \, \frac{\sn( u)\cn(u) u }{\dn(u)}  \right],
 	\end{equation*}
 	where $$E(u)=E(\psi, k)=\displaystyle\int_{0}^{u } \dn^2 (x) \, dx=\int_{0}^{\psi} \sqrt{1 - k^2 \sin^2 (\vartheta)} \, d\vartheta , \quad \sn (u)=\sin(\psi),$$ denotes the normal elliptic integral of the second kind (see \cite[Formula 110.03]{byrd}). Thus, 
 	\begin{align}\nonumber
 		\int_{0}^{bL} \frac{1}{ \dn (u ;k )^2}  du = \frac{1}{1-k^2} \left[ E(bL) - k^2 \,  \, \frac{\sn( bL)\cn(bL)bL}{\dn(bL)}  \right]. 
 	\end{align}Since $bL=4K(k)$ and $\sn(u+4K(k))=\sn(u)$ we have that $\sn( bL)=\sn(0)=0 $. Therefore, 	\begin{align}\label{blowup7'}
 		\int_{0}^{bL} \frac{1}{ \dn (u ;k )^2}  du = \frac{1}{1-k^2}   E(4K(k))  . 
 	\end{align}
 	From \eqref{blowup5}, \eqref{blowup6} and \eqref{blowup7'} we obtain 
 	\begin{equation}\label{energyphi}\begin{array}{lllll}
 		\mathcal{E}(\varphi, c\varphi')
 		&=& \displaystyle \left[ 1+ \frac{(1+c^2)(k^2+1)}{(1-c^2)(1-k^2)} \right] L -\left(1+\frac{1+c^2}{1-c^2}\right) \left[ -L+\frac{2}{b}	\int_{0}^{bL} \frac{1}{ \dn (u ;k )^2}  du \right ]\\\\ 
 		&= &\displaystyle \left[  \frac{2-k^2(1-c^2)}{(1-c^2)(1-k^2)} \right] 2L-4b E(4K(k)) \\\\ 
 		&= &\displaystyle \left[  \frac{2-k^2(1-c^2)}{(1-c^2)(1-k^2)} \right] 2L-\frac{16K(k)}{L} E(4K(k)) \\\\
 		&= &\displaystyle \left[  \frac{2 }{(1-c^2)(1-k^2)}-\frac{ k^2 }{ (1-k^2)} \right] 2L-\frac{16K(k)}{L} 4E(k)  \\\\ 
 		&= &\displaystyle   \frac{64K^2(k) }{L}-\frac{2L k^2 }{ (1-k^2)}   -\frac{64K(k) E(k) }{L} \\\\
 		&=& \displaystyle\frac{64}{L}\left[ K^2(k)-K(k) E(k)  -\frac{L^2 k^2 }{ 32(1-k^2)}   \right].
 \end{array}	\end{equation}
 	\indent In order to describe the behavior of $\mathcal{E}(\varphi,c\varphi')$ give by $(\ref{energyphi})$, we need to study the sign of the quantity $ \displaystyle K^2(k)-K(k) E(k)  -\frac{L^2 k^2 }{ 32(1-k^2)}$. In fact, if $L\in (0,l]$, where $l>0$ is small, we obtain that $\mathcal{E}(\varphi,c\varphi')$ is positive. Now, if $L$ is taken in the whole interval $(0,2\pi)$, $\mathcal{E}(\varphi,c\varphi')$ assumes positive and negative values (see Figure \ref{fig1}).

 	\begin{figure}[h!] 
 		\centering
 		\includegraphics[scale=0.4,angle=0]{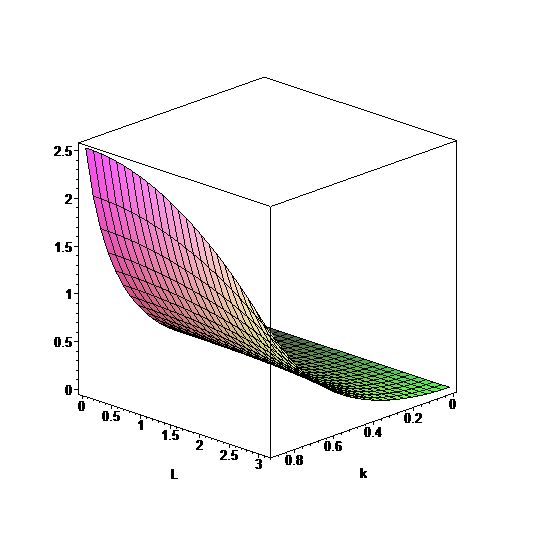}
 		\includegraphics[scale=0.4,angle=0
 		]{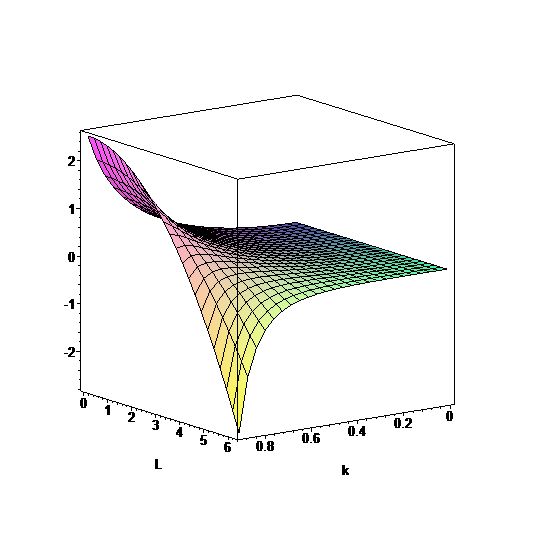} 
 		\caption{Left: Graph of $\mathcal{E}(\varphi, c\varphi')$ for $L\in (0,\pi]$ and $k\in(0,1)$. Right: Graph of $\mathcal{E}(\varphi,c\varphi')$ for $L\in (0,2\pi)$ and $k\in (0,1)$.}\label{fig1}
 	\end{figure}
 \end{proof}
 
\begin{remark}\label{remEphiphi} Some important remarks concerning Proposition \eqref{propEphi} should be highlighted. Indeed, for a fixed $L \in (0,2\pi)$, the sign of the energy of the periodic wave $(\varphi, c\varphi')$ depends on the modulus $k \in (0,1)$. According to Figure \ref{fig1}, we observe that, as $L$ approaches $2\pi$, the function $\mathcal{E}(\varphi, c\varphi')$ becomes negative for all values of the modulus $k \in (0,1)$. If $L$ is small, this quantity is positive. As far as we can see, the indefinite sign of $\mathcal{E}(\varphi, c\varphi')$ implies, according to Proposition \ref{bw}, the occurrence of a blow-up phenomenon for perturbations of the traveling wave generated by $\varphi$. This, in turn, suggests an instability result for the pair $(\varphi, c\varphi')$ for some values of $k\in (0,1)$.
\end{remark} 
 	
\section{Spectral analysis}\label{spectralKG}  	
 It is well known that equation \eqref{travSG1} is conservative, and its solutions lie on the level curves of the energy
 	\begin{equation}\label{hamilt.energ}
 		\mathcal{H}(\varphi, \varphi')=\frac{(\varphi')^2}{2} +\frac{1}{\omega} \cosh(\varphi)-\frac{1}{\omega}. \end{equation}
 	We see that $\varphi$ is a periodic solution of equation \eqref{travSG1} if, and only if, $(\varphi, \varphi')$ is a periodic orbit of the planar differential system 
 	\begin{align}\label{sist.hamil}
 		\begin{cases}
 			\varphi'=	\chi, \\ 
 			\chi'=- \frac{1}{\omega} \sinh(\varphi),
 		\end{cases} 	
 	\end{align}where $(0,0)$ is the unique critical point. It is worth noting that the condition $\omega>0$ was imposed to ensure that the critical point is a center.
 	
 	The periodic orbits corresponding to system \eqref{sist.hamil} are contained within appropriate level sets of the energy $	\mathcal{H}$ in \eqref{hamilt.energ}. In other words, the pair $(\varphi, \varphi')$
 	satisfies the equation $	\mathcal{H}(\varphi, \varphi')=B$ for all $B\in (0,+\infty)$. In addition, the periodic orbits of the planar system \eqref{sist.hamil} corresponds to an odd periodic solution $\varphi$ of the equation \eqref{travSG1}. Furthermore, the periodic solutions have period $\ell$, which depends on $B$ and $\omega$. However, since our intention is first to study the behavior of $\ell$ only in terms of $B$, we 
 	(formally) define it as follows:
 	\begin{equation}\label{period}
 		\ell(B)=\frac{\sqrt{2} }{2} \displaystyle \int_{\varphi ^-}^{\varphi^+}\frac{d\varphi}{\sqrt{B+\frac{1-\cosh(\varphi )}{\omega}}},
 	\end{equation} 
 	where $\varphi^{-}$ and $\varphi^{+}$ are the roots of $ B+\frac{1-\cosh(\varphi )}{\omega}$ and they are  given by  $$\varphi ^-=\displaystyle\min_{x\in[0,\ell]}\varphi (x) \quad \text{and} \quad\varphi ^+=\displaystyle\max_{x\in[0,\ell]}\varphi (x).$$  
 	
 	\indent On the other hand, consider $\Gamma_B$ as the orbit in the phase portrait corresponding to the periodic solution $\varphi$ in \eqref{existSG1}.  
 	The mapping in $(\ref{period})$ can be seen as a smooth function in terms of $B$ and $\omega$ and it can rewritten as
 	\begin{align}\label{period.function}
 	 \ell(B):(0,+\infty)&\rightarrow (0,2\pi) \nonumber\\ B \ \ &\mapsto \displaystyle\int_{\Gamma_B} \frac{d\varphi }{\varphi '}.
 	\end{align} We call $\ell=\ell(B)$ as the \textit{period function} associated with the periodic solution $\varphi$. Concerning the monotonicity of the period function $\ell$ in terms of $B\in (0,+\infty)$, we have the following result:
 	
 		\begin{lema}
 		\label{lem.period.function} The period function in \eqref{period.function} satisfies $ 	\ell'(B)<0$ for all $B\in(0,+\infty)$. 
 	\end{lema}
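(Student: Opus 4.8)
The plan is to bypass a direct differentiation of the quadrature \eqref{period} and instead exploit the explicit elliptic representation \eqref{existSG1}, keeping $\omega\in(0,1)$ fixed throughout (so that $\ell$ is genuinely a function of the energy level $B$ alone). Since $(0,0)$ is a center for \eqref{sist.hamil}, every bounded orbit is periodic, and the orbit $\Gamma_B$ at energy level $B$ is precisely the one traced by the explicit solution for a unique modulus $k=k(B)\in(0,1)$, by the computation that produced \eqref{existSG1}. To make this correspondence quantitative I would evaluate $\mathcal{H}$ in \eqref{hamilt.energ} along the explicit solution using the quadrature \eqref{travSG2} with $a=\frac{1+k^2}{1-k^2}$: this gives $B=\mathcal{H}(\varphi,\varphi')=\frac{1}{\omega}(a-1)=\frac{2k^2}{\omega(1-k^2)}$. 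A one-line computation then shows $\frac{dB}{dk}=\frac{4k}{\omega(1-k^2)^2}>0$, so $k\mapsto B$ is a smooth strictly increasing bijection of $(0,1)$ onto $(0,+\infty)$; consequently $\ell$ may be viewed as a smooth function of $k$, and it suffices to prove $\frac{d\ell}{dk}<0$.

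Next I would express the period as a function of $k$. From \eqref{speed1} we have $b=\frac{1}{\sqrt{\omega(1-k^2)}}$, and since $\sn(\cdot;k)$ has real period $4K(k)$ the spatial period of $\varphi$ equals $\ell=\frac{4K(k)}{b}=4\sqrt{\omega}\,K(k)\sqrt{1-k^2}$. Differentiating and inserting the classical identity $\frac{dK}{dk}=\frac{E(k)-(1-k^2)K(k)}{k(1-k^2)}$, the terms proportional to $k^2K(k)$ cancel and one obtains
\[
\frac{d\ell}{dk}=4\sqrt{\omega}\,\frac{E(k)-K(k)}{k\sqrt{1-k^2}}.
\]
Because $\frac{1}{\sqrt{1-k^2\sin^2\theta}}>\sqrt{1-k^2\sin^2\theta}$ on a set of positive measure, we have $E(k)<K(k)$ for every $k\in(0,1)$, so $\frac{d\ell}{dk}<0$. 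Combining this with $\frac{dB}{dk}>0$ via the chain rule yields $\ell'(B)=\big(\frac{d\ell}{dk}\big)\big(\frac{dB}{dk}\big)^{-1}=\frac{\omega^{3/2}(1-k^2)^{3/2}\bigl(E(k)-K(k)\bigr)}{k^2}<0$ for all $B\in(0,+\infty)$, which is the assertion of the lemma.

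The only step that genuinely needs care — and the one I expect to dwell on — is the identification of the abstract level set $\Gamma_B$ with the explicit solution: one must check that \eqref{existSG1} exhausts all periodic orbits of \eqref{sist.hamil} and that $k=k(B)$ is well defined and smooth. Both follow once the relation $B=\frac{2k^2}{\omega(1-k^2)}$ is established, together with the fact that the center structure forces a unique periodic orbit on each energy level. Everything beyond this reduces to the elementary inequality $E(k)<K(k)$ and the standard derivative formula for $K(k)$. An alternative that avoids the explicit formula would be to differentiate \eqref{period} directly, after the substitution normalizing the turning points $\varphi^\pm$, and to conclude by the sign of a monotonicity integrand built from the even potential $V(\varphi)=\frac{1}{\omega}(\cosh(\varphi)-1)$ (a classical device for Newtonian period functions); this is considerably more laborious and is unnecessary here since \eqref{existSG1} and \eqref{speed1} are already available.
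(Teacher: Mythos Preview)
Your argument is correct and self-contained, but it proceeds along a genuinely different route from the paper. The paper does not use the explicit elliptic formula at all; instead it observes that the restoring force $\mathtt{g}(x)=\frac{1}{\omega}\sinh(x)$ satisfies $x\,\mathtt{g}''(x)>0$ for $x\neq 0$, together with $G(x)=\frac{1}{\omega}(\cosh(x)-1)\to+\infty$ as $|x|\to\infty$, and then appeals directly to the monotonicity criterion of Chow and Wang \cite[Corollary 2.3]{chowwang} for Newtonian period functions. That yields $\ell'(B)<0$ in one line, at the cost of invoking an external black-box result.

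Your approach trades that citation for an explicit computation: the parametrization $B=\frac{2k^{2}}{\omega(1-k^{2})}$, $\ell=4\sqrt{\omega}\,K(k)\sqrt{1-k^{2}}$, and the reduction to the elementary inequality $E(k)<K(k)$. This buys you an explicit closed form for $\ell'(B)$ (not merely its sign), which is not without interest given that $\theta=-\ell'(B)$ reappears later in Lemma~\ref{theta}. The identification of $\Gamma_B$ with the explicit orbit is handled adequately by the strict monotonicity of $k\mapsto B$ and the uniqueness of the periodic orbit at each energy level around a center. The derivative formula $\frac{dK}{dk}=\frac{E-(1-k^2)K}{k(1-k^2)}$ and the cancellation leading to $\frac{d\ell}{dk}=\frac{4\sqrt{\omega}(E-K)}{k\sqrt{1-k^2}}$ are both correct.
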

 	\begin{proof}
 		Let us consider the functions $	\displaystyle\mathtt{g}(x)=\frac{1}{\omega} \sinh(x)$ and  $\displaystyle	G(x)=\frac{1}{\omega} \cosh(x) -\frac{1}{\omega}$, defined on $\mathbb{R}$. We can rewrite $\mathtt{g}$  as $\mathtt{g}(x)= x \mathtt{h}(x)$, where $\mathtt{h}$ is a positive 
 		smooth function given by 
 		\begin{align*}
 		\mathtt{h}(x)=\begin{cases}
 				\frac{1}{\omega}, \quad 	&x=0, \\  \frac{1}{\omega}\frac{\sinh(x)}{x}, \quad &x\neq 0.
 			\end{cases}
 		\end{align*}Furthermore, we observe that
 		$\displaystyle\lim_{x\rightarrow\pm +\infty}G(x)=+\infty,$
 		and $x \mathtt{g}''(x)= \frac{1}{\omega}\sinh(x) x >0 $ for all $x\neq 0$. Therefore, according to \cite[Corollary 2.3]{chowwang}, the period function $ \ell(B)$ is monotone decreasing on $(0,+\infty)$.
 	 	\end{proof}
 	
The main objective of this section is to study the non-positive spectrum corresponding to the operator
  $\mathcal{L}:H^2_{per}([0,\ell])\times H^1_{per}([0,\ell]) \subset\mathbb{L}^2_{per}([0,\ell])\rightarrow \mathbb{L}^2_{per}([0,\ell])$ defined in \eqref{matrixop313}. The eigenvalue problem related to the linearized operator 
 	\begin{equation}\label{opL1}
 		\mathcal{L}_1=-\omega\partial_x^2-\cosh(\varphi),
 	\end{equation}
 	associated with the solution in \eqref{existSG1}, 
 	will be studied through the analysis of the monotonicity of the period map corresponding to Newton’s equation in \eqref{travSG1}. This study aims to determine the spectral properties related to the operator $\mathcal{L}_1$.

 	Before proceeding, we briefly summarize some foundational elements of Floquet theory (for details, see \cite{eas}, \cite{magnus} and \cite{neves}). Let $\mathcal{Q}$ be an even smooth function that is $\ell$-periodic. Define the Hill operator $\mathcal{P}$ over the space $L^2_{per}([0,\ell])$, with domain $D(\mathcal{P})=H^2_{per}([0,\ell])$, as follows:
 	\begin{equation*}
 		\mathcal{P}=-\partial_x^2+\mathcal{Q}(x).
 	\end{equation*}
 	The spectrum of $ \mathcal{P}$ consists of a divergent sequence of real eigenvalues
 	\begin{equation}\label{specP}\lambda_0<\lambda_1\leq\lambda_2 \leq\lambda_3 \leq\lambda_4 \leq \cdots \leq \lambda_{2n-1}\leq \lambda_{2n}\leq \cdots ,\end{equation}
 	where equality $\lambda_{2n-1}= \lambda_{2n}$ corresponds to a double eigenvalue. Moreover, the spectrum can be characterized by the number of zeros of the corresponding eigenfunctions: if 
 	$f$ is an eigenfunction associated with either $\lambda_{2n-1}$ or $\lambda_{2n}$, then $f$ possesses exactly $2n$ zeros within the half-open interval 
 	$[0,\ell)$. In particular, the eigenfunction corresponding to the simple eigenvalue $\lambda_0$ is strictly positive (or strictly negative), exhibiting no zeros in $[0,\ell)$.
 	
 	Next, consider $z(x)$ a nontrivial $\ell$-periodic solution of the equation 
 	\begin{equation}\label{eq.floquet}
 		\mathcal{P}f=-f''+\mathcal{Q}(x)f=0.
 	\end{equation}
 	Let $y(x)$ be another solution of the equation \eqref{eq.floquet} which is linearly independent with $z(x)$. According to Floquet theory \cite{magnus}, there exists a real constant $\theta $ such that 
 	\begin{equation}\label{theta.param}
 		y(x+\ell)=y(x)+\theta z(x).
 	\end{equation} 
 	We see that	$\theta =0$, if and only if, $y$ is $\ell-$periodic.\\
 	\indent We now return our focus to the eigenvalue problem associated with the Hill operator in \eqref{opL1}. First, by differentiating equation \eqref{travSG1}, we observe that $\varphi' \in \Ker(\mathcal{L}_1)$. Furthermore, $\varphi'$ has exactly two zeros in the half-open interval $[0,\ell)$. Based on our construction, for the modified operator
 	\begin{equation}\label{operadorL2}
 		\mathcal{P} = \frac{1}{\omega}\mathcal{L}_1 = -\partial_x^2 - \frac{1}{\omega}\cosh(\varphi),
 	\end{equation}
 	we have that zero is either the second or the third eigenvalue of $\mathcal{P}$. Since $\omega$ is positive, $\mathcal{L}_1$ preserves the same position of the zero eigenvalue as $\mathcal{P}$.

 		\begin{lema}\label{theta}
 		We have that $ \theta=-\ell'(B)$, where $\theta$ is the constant in \eqref{theta.param}.
 	\end{lema}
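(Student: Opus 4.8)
The plan is to realize the second, non‑periodic solution of equation \eqref{eq.floquet} — for the Hill operator $\mathcal{P}=\tfrac1\omega\mathcal{L}_1=-\partial_x^2-\tfrac1\omega\cosh(\varphi)$ of \eqref{operadorL2} — as the derivative of the periodic orbit with respect to the energy level $B$, and then to read off $\theta$ by differentiating the periodicity relation of $\varphi$ in that parameter.

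First I would fix $B\in(0,+\infty)$ and let $\varphi(\,\cdot\,;B)$ denote the odd solution of $\omega\varphi''+\sinh(\varphi)=0$ with $\mathcal{H}(\varphi,\varphi')=B$, normalized by $\varphi(0;B)=0$ and $\varphi'(0;B)=\sqrt{2B}$; its smooth dependence on $B$ follows from the smooth dependence of solutions of ODEs on parameters together with the smoothness of $B\mapsto\varphi^\pm(B)$ at the positive energy levels (the turning points are regular points of the potential $G(\varphi)=\tfrac1\omega(\cosh\varphi-1)$, since $G'(\varphi^\pm)=\tfrac1\omega\sinh(\varphi^\pm)\neq0$). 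Set $z=\varphi'$ and $w=\partial_B\varphi(\,\cdot\,;B)$. Differentiating $\omega\varphi''+\sinh(\varphi)=0$ in $x$ gives $\mathcal{L}_1 z=0$, and differentiating it in $B$ gives $\mathcal{L}_1 w=0$; hence both $z$ and $w$ solve \eqref{eq.floquet}. From the normalization one reads off $w(0)=0$, $w'(0)=\tfrac1{\sqrt{2B}}$, while $z(0)=\sqrt{2B}$ and $z'(0)=\varphi''(0)=-\tfrac1\omega\sinh(0)=0$, so the (constant) Wronskian $W(z,w)=zw'-z'w$ equals $1$. Thus $w$ is precisely the second solution of \eqref{eq.floquet} linearly independent from $z=\varphi'$, normalized by $W(z,w)=1$, which is the normalization rendering the constant $\theta$ in \eqref{theta.param} unambiguous.

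Next, since $\ell(B)$ is by \eqref{period.function} the period of the periodic solution $\varphi(\,\cdot\,;B)$ — hence also of $z=\varphi'$ — we have the identity $\varphi(x+\ell(B);B)=\varphi(x;B)$ for every $x$. Differentiating with respect to $B$ and using $\varphi'(x+\ell(B);B)=z(x)$ yields
\begin{equation*}
\ell'(B)\,z(x)+w\big(x+\ell(B)\big)=w(x),\qquad\text{i.e.}\qquad w\big(x+\ell(B)\big)=w(x)-\ell'(B)\,z(x).
\end{equation*}
Comparing this with the defining relation $w(x+\ell)=w(x)+\theta z(x)$ of \eqref{theta.param} (for instance evaluating at $x=0$, where $z(0)=\sqrt{2B}\neq0$) gives $\theta=-\ell'(B)$, as asserted; combined with Lemma \ref{lem.period.function} this also shows $\theta>0$.

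I do not expect a genuine obstacle here: the argument is essentially one parameter differentiation, and the only delicate points are bookkeeping — namely, justifying the (smooth, hence $C^1$) dependence $B\mapsto\varphi(\,\cdot\,;B)$ so that differentiating under the periodicity relation is legitimate, and tracking the normalization of the second solution, since rescaling $w$ rescales $\theta$, so the identity $\theta=-\ell'(B)$ is tied to the choice $W(\varphi',w)=1$. One could instead differentiate the integral formula \eqref{period} for $\ell(B)$ and match it against an integral representation of $\theta$, but the parametric ODE route above is shorter and avoids the improper integrals at the turning points.
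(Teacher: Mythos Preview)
Your proof is correct and follows essentially the same approach as the paper: both identify the second solution of the Hill equation with $\partial_B\varphi$ (normalized so that $\partial_B\varphi'(0)=1/\varphi'(0)=1/\sqrt{2B}$) and extract $\theta$ by differentiating in $B$. Your version is slightly more streamlined---you differentiate the full periodicity relation $\varphi(x+\ell(B);B)=\varphi(x;B)$ and read off the Floquet form directly, whereas the paper evaluates $\theta=y(\ell(B))/\varphi'(0)$ and $\ell'(B)=-y(\ell(B))/\varphi'(0)$ separately before matching---and your explicit remark about the Wronskian normalization is a useful piece of bookkeeping that the paper leaves implicit.
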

 	\begin{proof}
 	Consider $\varphi$ as the solution of \eqref{travSG1}. Let $y$ be the unique solution of the corresponding initial value problem
 	\begin{equation}\label{Cauchyproblem0} 
 		\begin{cases}
 			-	 y''-\frac{1}{\omega}\cosh(\varphi) y=0, \\
 			y(0)= 0, \\
 			y'(0)=\frac{1}{\varphi'(0)}.
 		\end{cases}
 	\end{equation}
 From $(\ref{travSG1})$, we see that $\varphi'$ is an $\ell$-periodic solution of the equation in \eqref{Cauchyproblem0}. As in $(\ref{theta.param})$, there exists a constant $\theta$ such that
 	\begin{equation}\label{eq2.Cauchyproblem0}
 		y(x+\ell(B)) = y(x) + \theta \varphi'(x). 
 	\end{equation} 	
 	By taking $x = 0$ in $(\ref{eq2.Cauchyproblem0})$, it follows that
 	\begin{equation}\label{eq3.Cauchyproblem0}
 		\theta = \frac{y(\ell(B)) }{\varphi'(0) }. 
 	\end{equation}
 	% Using the Wronskian $W(\varphi',\bar{y})(\ell(B))=1$ in \eqref{eq3.Cauchyproblem0} and de periodicity of the function $\varphi$, it follows that 
 	%  \begin{equation}\label{eq4.Cauchyproblem0}
 		% 	\theta = \frac{\bar{y}(\ell(B))\varphi''(\ell(B))}{\varphi'(0)\varphi''(0)} =\frac{\bar{y}(\ell(B))}{\varphi'(0)}.
 		% \end{equation}
 	
 	Now, since $\varphi$ is odd and periodic, we have $\varphi(0) = \varphi(\ell(B)) = 0$. Thus, the smooth dependence of the solution $\varphi$ on the initial data implies that $\varphi$ depends smoothly on the parameters $B$ and $\omega$. In particular, this allows us to differentiate $\varphi(\ell(B)) = 0$ with respect to $B$ to obtain
 	\begin{equation}\label{eq5.Cauchyproblem0}
 		\varphi'(\ell(B)) \ell'(B) + \frac{\partial \varphi(\ell(B))}{\partial B} = 0.  
 	\end{equation}
 	
 	Next, we return to the quadrature form in \eqref{hamilt.energ}, which satisfies
 	\begin{equation}\label{eq6.Cauchyproblem0}
 		\frac{(\varphi'(x))^2}{2} + G(\varphi(x)) = B,
 	\end{equation}
 	where $\displaystyle G(\varphi(x)) = \frac{1}{\omega}\cosh(\varphi(x)) - \frac{1}{\omega}$. Differentiating equation \eqref{eq6.Cauchyproblem0} with respect to $B$ and evaluating the final result at $x = 0$, we obtain
 	\begin{equation*}
 		\varphi'(0) \frac{\partial \varphi'(0)}{\partial B} + \frac{1}{\omega}\sinh(\varphi(0)) \frac{\partial \varphi(0)}{\partial B} =\varphi'(0) \frac{\partial \varphi'(0)}{\partial B}= 1,
 	\end{equation*}
 	which implies that $\frac{\partial \varphi'(0)}{\partial B} = \frac{1}{\varphi'(0)}$. In addition, since in particular $\frac{\partial \varphi }{\partial B}$ is odd, it follows that $\frac{\partial \varphi(0)}{\partial B}=0$. On the other hand, differentiating equation \eqref{travSG1} with respect to the parameter $B$, we find that $\frac{\partial \varphi(x)}{\partial B}$ is a solution of the IVP \eqref{Cauchyproblem0}. By the existence and uniqueness theorem, it follows that $\frac{\partial \varphi(x)}{\partial B} = y(x)$.
 	
 These pieces of information, combined with \eqref{eq5.Cauchyproblem0}, yield
 \begin{equation}\label{eq8.Cauchyproblem0}
 	\ell'(B) = -\frac{y(\ell(B))}{\varphi'(\ell(B))} = -\frac{y(\ell(B))}{\varphi'(0)},
 \end{equation}
 in view of the fact that $\varphi'$ is periodic. Combining \eqref{eq3.Cauchyproblem0} and \eqref{eq8.Cauchyproblem0}, we obtain $\theta = -\ell'(B)$. This relation indicates that $y$ is not periodic whenever $\ell'(B) \neq 0$.
 		
 \end{proof}
 	\begin{remark} Some relevant comments are worth emphasizing regarding Lemmas \ref{lem.period.function} and \ref{theta}. Indeed, by Lemma $\ref{lem.period.function}$, we have $\ell'(B) < 0$ for all $B \in (0, +\infty)$, and by Lemma $\ref{theta}$, it follows that $y$ is not periodic for any value of $B$. Moreover, since $\ell$ depends smoothly on $B$ and $\omega$, and $\ell$ is strictly decreasing in terms of $B$, we deduce that for a fixed $L \in (0, 2\pi)$, there exists $B_0 \in (0, +\infty)$ such that $\ell(B_0, \omega) = L$ for all $\omega \in (0, 1)$. This fact allows us to recover the result established in Proposition $\ref{prop.WS}$, which asserts the existence of a smooth curve $\omega \in (0, 1) \mapsto \varphi \in H_{per,m}^{\infty}([0, L])$ with fixed period, obtained here by a different approach.	 		
 	\end{remark}
 	
 	The next result characterizes the spectral behavior of $\mathcal{L}_1$ and links the value of $\theta$ in $(\ref{eq2.Cauchyproblem0})$ to the number of negative eigenvalues. In what follows, we consider the periodic function $\varphi$ with fixed period as determined by Proposition $\ref{prop.WS}$. 
 	
 	\begin{prop}\label{eigenvalues.L1} Let $L\in (0,2\pi)$ be fixed. The linearized operator $ \mathcal{L}_1 : L^2_{\mathrm{per}} \to L^2_{\mathrm{per}}$ defined in \eqref{opL1} admits two simple negative eigenvalues and a simple zero eigenvalue with eigenfunction $\varphi'$.  Moreover, the remainder of the spectrum is a discrete set and bounded away from zero. 
 	\end{prop}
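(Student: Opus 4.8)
The plan is to obtain the non-positive spectrum of $\mathcal{L}_1$ from Floquet theory applied to the rescaled Hill operator $\mathcal{P}=\frac1\omega\mathcal{L}_1$ in \eqref{operadorL2}, feeding in the two lemmas above to control the Floquet constant $\theta$. Differentiating \eqref{travSG1} gives $\mathcal{L}_1\varphi'=0$, and from \eqref{varphilinha} the periodic function $\varphi'$ has exactly two zeros in $[0,\ell)$; hence the oscillation characterization of the periodic spectrum in \eqref{specP} places the eigenvalue $0$ of $\mathcal{P}$ at $\lambda_1$ or $\lambda_2$, and since $\omega>0$ the operator $\mathcal{L}_1=\omega\mathcal{P}$ inherits the same ordering. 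Thus the statement reduces to two points: that $0$ is a \emph{simple} eigenvalue, and that $0$ is precisely $\lambda_2$.

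For simplicity: if $0$ were a double periodic eigenvalue, both linearly independent solutions of $\mathcal{L}_1 f=0$ would be $\ell$-periodic, forcing $\theta=0$ in \eqref{theta.param}. But Lemma \ref{theta} gives $\theta=-\ell'(B)$ and Lemma \ref{lem.period.function} gives $\ell'(B)<0$ for all $B\in(0,+\infty)$, so $\theta>0$; hence the second solution $y$ is not periodic, $0$ is simple, and $\lambda_1<\lambda_2$.

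To decide $0=\lambda_2$ rather than $0=\lambda_1$, I would look at the Floquet discriminant $\Delta(\lambda)$ of $\mathcal{P}f=\lambda f$ near $\lambda=0$. Using the normalized fundamental pair $f_1,f_2$ (with $f_1(0)=f_2'(0)=1$, $f_1'(0)=f_2(0)=0$, so that $W(f_1,f_2)\equiv 1$), the oddness of $\varphi$ (which gives $\varphi''(0)=0$) together with $\mathcal{L}_1\varphi'=0$ and \eqref{theta.param} yields $f_1(\ell,0)=f_2'(\ell,0)=1$, $f_1'(\ell,0)=0$ and $f_2(\ell,0)=\theta\,(\varphi'(0))^2$; a variation-of-parameters computation of $\Delta'(0)$ then produces $\Delta'(0)=-\theta\,(\varphi'(0))^2\!\int_0^\ell f_1(s,0)^2\,ds<0$, since $\theta>0$. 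On the other hand, the second spectral gap $(\lambda_1,\lambda_2)$ is exactly where $\Delta>2$ while $\Delta\le 2$ on the adjacent bands, so $\Delta$ must be increasing at a simple left gap-edge and decreasing at a simple right gap-edge; the sign $\Delta'(0)<0$ therefore forces $0=\lambda_2$ — equivalently, this is the sign criterion for $\theta$ recorded in \cite{neves}. Hence $\mathcal{P}$, and so $\mathcal{L}_1=\omega\mathcal{P}$, has exactly two simple negative eigenvalues $\lambda_0<\lambda_1<0$ together with a simple zero eigenvalue whose eigenfunction is $\varphi'$. Finally, $\mathcal{L}_1$ has compact resolvent on $L^2_{per}([0,\ell])$, so its spectrum is a discrete set of finite-multiplicity eigenvalues, and $\lambda_3>\lambda_2=0$ shows the remainder of the spectrum is bounded away from zero.

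The main obstacle is the last localization step: distinguishing $\lambda_2$ from $\lambda_1$. The oscillation count, the simplicity coming from $\theta\neq 0$, and the compact-resolvent conclusion are routine Floquet theory; the delicate matter is extracting the correct sign in the relation between $\theta$ and the local behaviour of $\Delta$ at $0$, which is precisely where the monotonicity $\ell'(B)<0$ from Lemma \ref{lem.period.function} must be used, and used with the right orientation.
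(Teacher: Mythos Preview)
Your proposal is correct and follows the same route as the paper: use Lemmas~\ref{lem.period.function} and~\ref{theta} to obtain $\theta=-\ell'(B)>0$, and then conclude via the sign criterion from \cite{neves} that $0$ is the third (simple) periodic eigenvalue. The only difference is presentational: the paper invokes \cite[Theorem 3.1]{neves} as a black box, whereas you unpack its content by computing the monodromy data $f_1(\ell,0)=f_2'(\ell,0)=1$, $f_1'(\ell,0)=0$, $f_2(\ell,0)=\theta(\varphi'(0))^2$ and reading off the sign of $\Delta'(0)$ to distinguish $\lambda_2$ from $\lambda_1$.
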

 	\begin{proof}
 		Indeed, Lemmas \ref{lem.period.function} and \ref{theta} yield that $\theta > 0$ on $(0,+\infty)$. Consequently, by \cite[Theorem 3.1]{neves}, zero is the third eigenvalue of $\mathcal{L}_1$, and this eigenvalue is simple. The remaining conclusions follow from the fact that $\frac{1}{\omega}\mathcal{L}_1$ is a Hill operator with an even periodic potential (see \eqref{specP}).
 	\end{proof}
 	
 	Next, we prove that the kernel of the full linear operator $\mathcal{L}$ defined in \eqref{matrixop313} is one-dimensional. 
 	\begin{lema}\label{kernel.L}
 		Let $L\in (0,2\pi)$ be fixed. Let $\varphi$ be the solution obtained in Proposition \ref{prop.WS}. Then  $\Ker(\mathcal{L})=[(\varphi', c\varphi'')]$.
 	\end{lema}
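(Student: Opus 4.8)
The plan is to establish the two inclusions $[(\varphi',c\varphi'')]\subseteq\Ker(\mathcal{L})$ and $\Ker(\mathcal{L})\subseteq[(\varphi',c\varphi'')]$. For the first one I would simply apply $\mathcal{L}$ to the pair $(\varphi',c\varphi'')$. The second component gives $-c\partial_x\varphi'+c\varphi''\equiv 0$, while the first component equals $-\varphi'''-\cosh(\varphi)\varphi'+c^{2}\varphi'''=(c^{2}-1)\varphi'''-\cosh(\varphi)\varphi'=-\omega\varphi'''-\cosh(\varphi)\varphi'=\mathcal{L}_{1}\varphi'$, using $c^{2}-1=-\omega$. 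Since $\varphi'\in\Ker(\mathcal{L}_{1})$ (obtained by differentiating \eqref{travSG1}), the pair lies in $\Ker(\mathcal{L})$, and as $\varphi'\not\equiv 0$ the corresponding span is one-dimensional.

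For the converse, I would take an arbitrary $(f,g)\in H^{2}_{per}\times H^{1}_{per}$ with $\mathcal{L}(f,g)=0$, that is,
\[
\begin{cases}
-f''-\cosh(\varphi)f+cg'=0,\\
-cf'+g=0.
\end{cases}
\]
The second equation yields $g=cf'$, which is consistent with the regularity since $f\in H^{2}_{per}$ forces $cf'\in H^{1}_{per}$. Substituting into the first equation gives $-f''-\cosh(\varphi)f+c^{2}f''=(c^{2}-1)f''-\cosh(\varphi)f=-\omega f''-\cosh(\varphi)f=\mathcal{L}_{1}f=0$, so $f$ is an element of $H^{2}_{per}$ in $\Ker(\mathcal{L}_{1})$. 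By Proposition \ref{eigenvalues.L1}, zero is a simple eigenvalue of $\mathcal{L}_{1}$ with eigenfunction $\varphi'$, hence $f=\alpha\varphi'$ for some $\alpha\in\mathbb{R}$, and then $g=cf'=\alpha c\varphi''$. Therefore $(f,g)=\alpha(\varphi',c\varphi'')$, which proves $\Ker(\mathcal{L})\subseteq[(\varphi',c\varphi'')]$ and completes the argument.

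I do not anticipate a genuine obstacle here: the decoupling of the $2\times 2$ system into the scalar Hill operator $\mathcal{L}_{1}$ is purely algebraic, hinging on the identity $c^{2}-1=-\omega$, and all of the spectral content has already been absorbed into Proposition \ref{eigenvalues.L1} (which itself rests on the monotonicity of the period function in Lemma \ref{lem.period.function} and on the Floquet relation $\theta=-\ell'(B)$ of Lemma \ref{theta}). The only point deserving a moment's care is the bookkeeping of the domains, namely verifying that $g=cf'\in H^{1}_{per}$ exactly when $f\in H^{2}_{per}$, so that $f$ is a legitimate element of the domain of $\mathcal{L}_{1}$ and Proposition \ref{eigenvalues.L1} may be invoked.
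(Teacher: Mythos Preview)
Your argument is correct and matches the paper's proof essentially step for step: both reduce the $2\times 2$ kernel equation to $\mathcal{L}_1 f=0$ via the substitution $g=cf'$ and then invoke Proposition \ref{eigenvalues.L1}, with the reverse inclusion coming from differentiating \eqref{travSG1}. The only cosmetic difference is the order in which you treat the two inclusions and your added remarks on domain regularity.
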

 	\begin{proof}
 		Observe that $(f_1,f_2)\in \Ker(\mathcal{L})$ if, and only if, \begin{align}\label{sistem}
 			\begin{cases}
 				&	 -f_1''-\cosh(\varphi)f_1+cf_2'=0, \\& -cf_1'+f_2=0.
 			\end{cases}
 		\end{align}
 		Substituting the second equation in \eqref{sistem} into the first one, we obtain $\mathcal{L}_1 f_1=0$, that is, $f_1\in \Ker(\mathcal{L}_1)$. It follows from Proposition \ref{eigenvalues.L1} that there exists $\alpha_0 \in \mathbb{R}$ such that $f_1=\alpha_0 \varphi'$. Thus, we have that $$(f_1,f_2)=(\alpha_0 \varphi',\alpha_0c \varphi'')=\alpha_0(\varphi',c\varphi'')\in [(\varphi', c\varphi'')].$$ This proves that $\Ker(\mathcal{L}) \subseteq [(\varphi', c\varphi'')]$. On the other hand, upon differentiating \eqref{travSG1} with respect to $x$ and using the linearity of $\mathcal{L}$, we obtain the opposite inclusion, which completes the proof.
 	\end{proof}

 		The rest of the spectrum of $\mathcal{L}$ is characterized as stated in the next result.
 		
 		\begin{prop}\label{2eigenvalues} Let $L\in (0,2\pi)$ be fixed.
 			The operator $\mathcal{L}$ in \eqref{matrixop313} has exactly two negative eigenvalues, which are simple. Zero is a simple eigenvalue with associated eigenfunction $(\varphi',c\varphi'')$. In addition, the rest of the
 			spectrum is constituted by a discrete set of eigenvalues.
 		\end{prop}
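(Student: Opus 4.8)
The plan is to transfer the spectral information from the scalar operator $\mathcal{L}_1$ (Proposition \ref{eigenvalues.L1}) and from the kernel computation (Lemma \ref{kernel.L}) to the matrix operator $\mathcal{L}$, using a quadratic-form decoupling to count the negative eigenvalues, a reduction to a $\lambda$-dependent Hill operator to obtain their simplicity, and a relative-compactness argument to locate the remaining spectrum.

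First I would establish the identity
$$\left(\mathcal{L}(f_1,f_2),(f_1,f_2)\right)_{\mathbb{L}^2_{per}}=\left(\mathcal{L}_1 f_1,f_1\right)_{L^2_{per}}+\|f_2-cf_1'\|_{L^2_{per}}^2,\qquad (f_1,f_2)\in H^1_{per}\times L^2_{per},$$
obtained by integrating the cross terms by parts, completing the square in $f_2$, and using $\omega=1-c^2$; in particular $\mathcal{L}$ is bounded from below. Since the map $(f_1,f_2)\mapsto(f_1,f_2-cf_1')$ is a bounded linear bijection of $H^1_{per}\times L^2_{per}$ with bounded inverse, $\operatorname{n}(\mathcal{L})$ equals the Morse index of the quadratic form $(h_1,h_2)\mapsto(\mathcal{L}_1 h_1,h_1)+\|h_2\|_{L^2_{per}}^2$. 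As the second summand is nonnegative, any subspace on which this form is negative definite projects injectively onto a subspace where $(\mathcal{L}_1\,\cdot\,,\cdot)$ is negative, while conversely pairing the two-dimensional negative eigenspace of $\mathcal{L}_1$ with $h_2=0$ produces a negative subspace; hence $\operatorname{n}(\mathcal{L})=\operatorname{n}(\mathcal{L}_1)=2$.

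To obtain simplicity, I would use that for $\lambda\neq 1$ the eigenvalue equation $\mathcal{L}(f_1,f_2)=\lambda(f_1,f_2)$ is equivalent to $f_2=\frac{c}{1-\lambda}f_1'$ together with $\mathcal{M}_\lambda f_1=\lambda f_1$, where $\mathcal{M}_\lambda:=-\frac{\omega-\lambda}{1-\lambda}\partial_x^2-\cosh(\varphi)$; thus the multiplicity of $\lambda$ as an eigenvalue of $\mathcal{L}$ equals $\dim\Ker(\mathcal{M}_\lambda-\lambda)$. For $\lambda\le 0$ the leading coefficient $\gamma(\lambda)=\frac{\omega-\lambda}{1-\lambda}$ is positive, so $\mathcal{M}_\lambda$ is a Hill operator; moreover $\gamma$ is decreasing with $\gamma(0)=\omega$, so $\gamma(\lambda)\ge\omega$, which gives $\mathcal{M}_\lambda\ge\mathcal{L}_1$ in the form sense and hence $\mu_j(\mathcal{M}_\lambda)\ge\mu_j(\mathcal{L}_1)$ for all $j$ by the min-max principle. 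If some $\lambda<0$ were an eigenvalue of $\mathcal{L}$ of multiplicity at least two, then $\dim\Ker(\mathcal{M}_\lambda-\lambda)=2$ (a Hill operator has eigenvalues of multiplicity at most two), so $\lambda$ would be a doubled periodic eigenvalue of $\mathcal{M}_\lambda$; since the ground state of a Hill operator is simple, this forces $\lambda\ge\mu_2(\mathcal{M}_\lambda)\ge\mu_2(\mathcal{L}_1)=0$, a contradiction. Therefore each negative eigenvalue of $\mathcal{L}$ is simple, and combined with $\operatorname{n}(\mathcal{L})=2$ this yields exactly two negative eigenvalues, both simple. That $0$ is a simple eigenvalue with eigenfunction $(\varphi',c\varphi'')$ is precisely Lemma \ref{kernel.L}.

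Finally, for the remaining spectrum I would write $\mathcal{L}=\mathcal{L}_0-\begin{pmatrix}\cosh(\varphi)&0\\0&0\end{pmatrix}$ with $\mathcal{L}_0=\begin{pmatrix}-\partial_x^2&c\partial_x\\-c\partial_x&1\end{pmatrix}$; the perturbation is $\mathcal{L}_0$-relatively compact, since the first component of $(\mathcal{L}_0-z)^{-1}$ takes values in $H^2_{per}$ and the embedding $H^2_{per}\hookrightarrow L^2_{per}$ is compact, so Weyl's theorem gives $\sigma_{ess}(\mathcal{L})=\sigma_{ess}(\mathcal{L}_0)$. Diagonalizing $\mathcal{L}_0$ by Fourier series, the $k$-th mode contributes the two eigenvalues of a Hermitian $2\times 2$ matrix, one branch tending to $+\infty$ and the other to $\omega$ as $|k|\to\infty$, whence $\sigma_{ess}(\mathcal{L})=\{\omega\}\subset(0,+\infty)$. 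Consequently the non-positive spectrum consists only of the finitely many isolated eigenvalues found above, and the rest of $\sigma(\mathcal{L})$ is a discrete set of isolated eigenvalues of finite multiplicity. I expect the simplicity of the two negative eigenvalues to be the main obstacle; everything there hinges on the monotone comparison $\mathcal{M}_\lambda\ge\mathcal{L}_1$ for $\lambda\le 0$ and on the non-degeneracy of the ground state of a Hill operator.
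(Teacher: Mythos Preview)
Your proof is correct and follows a genuinely different route from the paper's. The paper proceeds by homotopy in $c$: at $c=0$ the operator block-diagonalizes as $\mathrm{diag}(\mathcal{L}_1,1)$, so two simple negative eigenvalues come directly from Proposition \ref{eigenvalues.L1}, and Kato-type continuity of eigenvalues in $c$, combined with the simplicity of $\Ker(\mathcal{L})$ for every $c$ (Lemma \ref{kernel.L}), is invoked to propagate this across $(-1,1)$; discreteness of the spectrum is asserted from the compact embedding $H^2_{per}\times H^1_{per}\hookrightarrow\mathbb{L}^2_{per}$. Your argument is instead direct for each fixed $c$: the completed-square identity $(\mathcal{L}(f_1,f_2),(f_1,f_2))=(\mathcal{L}_1 f_1,f_1)+\|f_2-cf_1'\|^2$ reduces the Morse index to that of $\mathcal{L}_1$ at once, and the reduction to the $\lambda$-dependent Hill operator $\mathcal{M}_\lambda$ together with the form comparison $\mathcal{M}_\lambda\ge\mathcal{L}_1$ yields simplicity of the negative eigenvalues without any perturbation-in-$c$ step --- a point on which the paper's continuity reasoning is in fact silent, since it does not explicitly rule out a collision of the two negative eigenvalues at some intermediate $c$. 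Your Weyl/relative-compactness analysis is also sharper: it identifies $\sigma_{ess}(\mathcal{L})=\{\omega\}\subset(0,1)$ rather than asserting the essential spectrum is empty, which is the correct picture for this non-elliptic system. One small correction: for general $(g_1,g_2)\in L^2_{per}\times L^2_{per}$ the first component of $(\mathcal{L}_0-z)^{-1}(g_1,g_2)$ lies only in $H^1_{per}$, not $H^2_{per}$ (the $g_2'$ term in the reduced scalar equation costs one derivative), but since $H^1_{per}\hookrightarrow L^2_{per}$ is already compact the relative-compactness conclusion stands.
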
  
 		\begin{proof} First, since $\mathcal{L}$  is self-adjoint, we obtain that all its eigenvalues are contained in the real line. Because of the compact embedding $H_{per}^2\times H_{per}^1\hookrightarrow \mathbb{L}_{per}^2$, we obtain that the essential spectrum of $\mathcal{L}$ is empty, so that the spectrum of $\mathcal{L}$ is constituted by a discrete set of eigenvalues with finite multiplicity. Next, for $c = 0$ we see that $\omega =  1$ (see Remark $\ref{comega}$) and
 			\begin{equation}\label{matrixop3130}
 				\displaystyle \mathcal{L}=\left(
 				\begin{array}{ccc}
 					-\partial_x^2-\cosh(\varphi) & &0\\\\
 					\ \ \ \ 0 & & 1
 				\end{array}\right)=\left(
 				\begin{array}{ccc}
 					\mathcal{L}_1 & &0 \\ \\ 
 					0 & & 1
 				\end{array}\right).
 			\end{equation}
 			Since the operator in \eqref{matrixop3130} is diagonal and, by Proposition \ref{eigenvalues.L1}, the linearized operator $\mathcal{L}_1$ admits two simple negative eigenvalues, it follows that $\mathcal{L}$ in \eqref{matrixop3130} also has exactly two simple negative eigenvalues. Then, by Lemma \ref{kernel.L} together with the continuity of the eigenvalues with respect to $c \in (-1,1)$ (see \cite[Chapter 4, Section 3.5]{kato}), it follows that the full linear operator $\mathcal{L}$ in $(\ref{matrixop313})$ admits two simple negative eigenvalues, as desired. 
 		\end{proof}
 		
 		The next step is to study the spectrum of the projection operator $\mathcal{L}_{\Pi}$ in $(\ref{opconstrained2})$. In fact, we start by establishing some essential preliminary facts. First, we consider the constrained subspace $S_1= [1] \subset \Ker(\mathcal{L}_1)^{\perp}=[\varphi']^{\bot}$, which is related to the auxiliary linear operator $\mathcal{L}_{1\Pi}: H^2_{per,m}\subset  L^2_{per,m}\rightarrow  L^2_{per,m}$, defined by $\displaystyle\mathcal{L}_{1\Pi}=\mathcal{L}_1+\frac{1}{L}(\cosh(\varphi),\cdot)_{L_{per}^2}$. Since the kernel of $\mathcal{L}_1$ is simple and generated by $\varphi'$, we can define the number $D_1=(\mathcal{L}_1^{-1} 1,1)_{L^2_{per}},$ because $1\in \Ker(\mathcal{L}_1)^{\bot}$.
 		
 	The Index Theorem, as stated in \cite[Theorem 5.3.2]{kapitula} and \cite[Theorem 4.1]{pel-book}, applies to self-adjoint operators and can be used to determine the precise number of negative eigenvalues and the dimension of the kernel of $\mathcal{L}_{1\Pi}$ and $\mathcal{L}_{\Pi}$. Indeed, as $\Ker(\mathcal{L}_1)=[\varphi']$, one has 
 		\begin{equation}\label{indexformula12}
 			\text{n}(\mathcal{L}_{{1\Pi}})=\text{n}(\mathcal{L}_1)-{\rm n}_0-{\rm z}_0 \quad \text{and} \quad \text{z}(\mathcal{L}_{{1\Pi}})=\text{z}(\mathcal{L}_1)+{\rm z}_0,
 		\end{equation}
 		where $\text{n}(\mathcal{P})$ and $\text{z}(\mathcal{P})$ refer to the count of negative eigenvalues and the kernel dimension of a linear operator $\mathcal{P}$ (counting multiplicities). Furthermore, the numbers ${\rm n}_0$ and ${\rm z}_0$ are defined respectively as
 		\begin{equation}\label{n0z0}
 			{\rm n}_0=
 			\begin{cases}
 				1, \: \text{if} \: D_1<0, \\
 				0, \: \text{if} \: D_1 \geq 0,\ \
 			\end{cases}
 			\quad \text{and} \qquad\ 
 			{\rm z}_0=
 			\begin{cases}
 				1, \: \text{if} \: D_1=0, \\
 				0, \: \text{if} \: D_1 \neq 0.
 			\end{cases}
 		\end{equation}
 		\indent We now present a result that determines the spectral information of the operator $\mathcal{L}_{\Pi}$.% in $(\ref{opconstrained2})$.
 		
 		\begin{proposition}\label{leman1}
 			Let $L\in (0,2\pi)$ be fixed. The linear operator $\mathcal{L}_{\Pi}$ in $(\ref{opconstrained2})$ has exactly one negative eigenvalue, which is simple, and $(\varphi',c\varphi'')$ is a simple eigenfunction associated with the zero eigenvalue. 
 		\end{proposition}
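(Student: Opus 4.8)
The plan is to recognize $\mathcal{L}_{\Pi}$ as the orthogonally constrained operator associated with $\mathcal{L}$ and then apply the Index Theorem. Writing $P$ for the orthogonal projection of $\mathbb{L}^2_{per}$ onto $\mathbb{L}^2_{per,m}$ (subtraction of the mean in each coordinate), a short computation shows that the extra matrix term in $(\ref{opconstrained2})$ is exactly the correction making $\mathcal{L}_{\Pi}=P\mathcal{L}|_{\mathbb{L}^2_{per,m}}$: for $(u,v)\in H^2_{per,m}\times H^1_{per,m}$ the first component of $\mathcal{L}(u,v)$ has mean $-\frac1L\int_0^L\cosh(\varphi)u\,dx$ and the second has zero mean. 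Equivalently, $\mathcal{L}_{\Pi}$ is the restriction of $\mathcal{L}$ to the orthogonal complement of $\chi_1=(1,0)$ and $\chi_2=(0,1)$. By Proposition $\ref{2eigenvalues}$ and Lemma $\ref{kernel.L}$, $\mathcal{L}$ is self-adjoint with $\operatorname{n}(\mathcal{L})=2$, $\operatorname{z}(\mathcal{L})=1$ and $\Ker(\mathcal{L})=[(\varphi',c\varphi'')]$; moreover $\chi_1,\chi_2$ are linearly independent and, since $\int_0^L\varphi'\,dx=\int_0^L\varphi''\,dx=0$, they lie in $\Ker(\mathcal{L})^{\perp}=\operatorname{Range}(\mathcal{L})$. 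Hence the hypotheses of the Index Theorem (\cite[Theorem 5.3.2]{kapitula}, \cite[Theorem 4.1]{pel-book}) are satisfied and
\[
\operatorname{n}(\mathcal{L}_{\Pi})=\operatorname{n}(\mathcal{L})-\operatorname{n}(\mathcal{D})-\operatorname{z}(\mathcal{D}),\qquad
\operatorname{z}(\mathcal{L}_{\Pi})=\operatorname{z}(\mathcal{L})+\operatorname{z}(\mathcal{D}),
\]
where $\mathcal{D}=\big[(\mathcal{L}^{-1}\chi_i,\chi_j)_{\mathbb{L}^2_{per}}\big]_{1\le i,j\le 2}$ (well defined since $\chi_j\perp\Ker(\mathcal{L})$).

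Next I would identify $\mathcal{D}$ explicitly. Solving $\mathcal{L}(p,q)=(1,0)$, the second equation gives $q=cp'$, and substituting into the first yields $-\omega p''-\cosh(\varphi)p=1$, i.e.\ $\mathcal{L}_1p=1$; this is solvable because $1\perp\varphi'=\Ker(\mathcal{L}_1)$, so one may take $p=\mathcal{L}_1^{-1}1$ and $q=c(\mathcal{L}_1^{-1}1)'$. A direct computation also gives $\mathcal{L}(0,1)=(0,1)$. Using periodicity ($\int_0^L(\mathcal{L}_1^{-1}1)'\,dx=0$) and that the first coordinate of $\chi_2$ vanishes, the off-diagonal entries of $\mathcal{D}$ are zero and
\[
\mathcal{D}=\begin{pmatrix} (\mathcal{L}_1^{-1}1,1)_{L^2_{per}} & 0 \\ 0 & L\end{pmatrix}=\begin{pmatrix} D_1 & 0 \\ 0 & L\end{pmatrix},
\]
which is precisely the sub-matrix $D$ announced after $(\ref{matrixD''1})$.

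The crux — and the step I expect to be the main obstacle — is to show that $D_1<0$ for every $k\in(0,1)$ and every $L\in(0,2\pi)$. My plan is to exploit the explicit profile $(\ref{existSG1})$: the change of variables $v=K(k)-\tfrac{4K(k)}{L}x$, chosen so that $\varphi'(x)$ becomes a multiple of $\sn(v;k)$ (using $\cn(u;k)/\dn(u;k)=\sn(K-u;k)$), transforms $\mathcal{L}_1$ into $\tfrac{1}{1-k^2}\big(-\partial_v^2+2k^2\sn^2(v;k)-(1+k^2)\big)$, a shifted $n=1$ Lam\'e operator whose three lowest modes $\dn(v)$, $\cn(v)$, $\sn(v)$ and corresponding eigenvalues $-\tfrac{1}{1-k^2}$, $-\tfrac{k^2}{1-k^2}$, $0$ are explicit (this in particular recovers Proposition $\ref{eigenvalues.L1}$). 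One then solves $\mathcal{L}_1p=1$ in closed form — the particular solution being expressed through Jacobi elliptic functions and the Jacobi zeta function — to obtain a formula for $D_1=\int_0^Lp\,dx$ in terms of $k$, $K(k)$ and $E(k)$, and verifies that it is negative on $(0,1)$ (if necessary with the same kind of numerical support used for $\mathcal{E}(\varphi,c\varphi')$ in Proposition $\ref{propEphi}$ and Figure $\ref{fig1}$). An equivalent route is to perturb the profile equation by a constant source, $\omega\varphi''+\sinh(\varphi)=\epsilon$, at fixed period, which gives the identity $D_1=-L\,\partial_\epsilon\!\big(\tfrac1L\int_0^L\varphi_\epsilon\,dx\big)\big|_{\epsilon=0}$, and to determine the sign from this quantity; I expect the explicit elliptic computation to be the technical heart of the argument.

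Finally, granting $D_1<0$ and recalling $L>0$, we obtain $\operatorname{n}(\mathcal{D})=1$ and $\operatorname{z}(\mathcal{D})=0$, whence $\operatorname{n}(\mathcal{L}_{\Pi})=2-1-0=1$ and $\operatorname{z}(\mathcal{L}_{\Pi})=1+0=1$. Since $(\varphi',c\varphi'')\in\Ker(\mathcal{L})$ has zero mean in each coordinate, it belongs to $\mathbb{L}^2_{per,m}$ and therefore spans $\Ker(\mathcal{L}_{\Pi})$, which is exactly the claimed statement. (As an alternative organization one may first reduce to the scalar operator $\mathcal{L}_{1\Pi}$ via the identity $(\mathcal{L}_{\Pi}(u,v),(u,v))_{\mathbb{L}^2_{per}}=(\mathcal{L}_1u,u)_{L^2_{per}}+\|v-cu'\|^2_{L^2_{per}}$, valid on $H^2_{per,m}\times H^1_{per,m}$, and then invoke $(\ref{indexformula12})$ with the same sign input $D_1<0$.)
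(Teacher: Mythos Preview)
Your overall strategy is exactly the paper's: identify $\mathcal{L}_{\Pi}$ as the constraint of $\mathcal{L}$ to $[(1,0),(0,1)]^{\perp}$, apply the Index Theorem, and reduce the count to the sign of $D_1=(\mathcal{L}_1^{-1}1,1)_{L^2_{per}}$ via the diagonal matrix $D=\operatorname{diag}(D_1,L)$. Your computation of $D$ and the final bookkeeping $\operatorname{n}(\mathcal{L}_{\Pi})=2-1-0=1$, $\operatorname{z}(\mathcal{L}_{\Pi})=1$ coincide with the paper's.

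The only substantive divergence is how to establish $D_1<0$. You propose to transform $\mathcal{L}_1$ into a shifted $n=1$ Lam\'e operator (your change of variables and the resulting form $\tfrac{1}{1-k^2}(-\partial_v^2+2k^2\sn^2-(1+k^2))$ are correct) and then solve $\mathcal{L}_1p=1$ in closed form; you hedge that numerics may be needed. The paper instead builds the periodic solution of $\mathcal{L}_1\tilde f=1$ directly by variation of parameters from the pair $(\varphi',y)$, with $y$ the solution of the IVP $(\ref{Cauchyproblem0})$, obtaining $\tilde f(x)=\tfrac{1}{\omega}\big(\int_0^x y\big)\varphi'(x)-\tfrac{1}{\omega}y(x)\varphi(x)$ and checking periodicity by hand. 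It then evaluates $D_1=\int_0^L\tilde f$ numerically for $L=\pi$ and extends to all $L\in(0,2\pi)$ by a scaling argument ($\tilde\varphi(x)=\varphi(\tfrac{L}{\tilde L}x)$ solves the same equation with $\tilde\omega=(\tilde L/L)^2\omega$, whence $D_1$ for general $L$ is a positive multiple of $D_1$ for $\tilde L=\pi$). Your Lam\'e route is a legitimate alternative and has the appeal of making the spectrum of $\mathcal{L}_1$ fully explicit, but the closed-form computation of $\int_0^L\mathcal{L}_1^{-1}1$ is not carried out, so as written it remains a plan; the paper's variation-of-parameters plus scaling is more concrete and avoids the need for an explicit elliptic antiderivative.
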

 		\begin{proof}
 			Counting the negative eigenvalues requires observing that $\mathcal{L}_{\Pi}$ is the constrained operator $\mathcal{L}$ defined in $\mathbb{L}_{per,m}^2$ satisfying $\mathcal{L}_{\Pi}|_{S^\perp}=\mathcal{L}$, with constrained space given by $$S= [(1,0),(0,1)] \subset \Ker(\mathcal{L})^{\perp}=[(\varphi',c\varphi'')]^\perp.$$
 			\indent In association with the constrained set $S$, we define the matrix
 			\begin{equation}\label{matrixD}
 				D=\left[\begin{array}{llll}(\mathcal{L}^{-1}(1,0),(1,0))_{\mathbb{L}^2_{per}}& & (\mathcal{L}^{-1}(1,0),(0,1))_{\mathbb{L}^2_{per}}\\\\
 					(\mathcal{L}^{-1}(1,0),(0,1))_{\mathbb{L}^2_{per}}& & (\mathcal{L}^{-1}(0,1),(0,1))_{\mathbb{L}^2_{per}}\end{array}\right].
 			\end{equation}
 			Given that $\mathcal{L}$ is a linear operator, $\mathcal{L}(0,1)=(0,1)$ and $(1,1)=(1,0)+(0,1)$, we obtain the identities $\mathcal{L}^{-1}(1,0)=\mathcal{L}^{-1}(1,1)-\mathcal{L}^{-1}(0,1)=\mathcal{L}^{-1}(1,1)-(0,1)$, and
 			\begin{equation}\label{D}
 				\left( \mathcal{L}^{-1}(1,0), (1,0) \right)_{\mathbb{L}^2_{per}}= (\tilde{f},1)_{L^2_{per}}  = ( \mathcal{L}_1^{-1} 1,1)_{L^2_{per}}=D_1.
 			\end{equation}
 			Moreover, 
 			\begin{equation}\label{D12}
 				\left( \mathcal{L}^{-1}(1,0), (0,1) \right)_{\mathbb{L}^2_{per}}=\left(\mathcal{L}^{-1}(1,1),(0,1)\right)_{\mathbb{L}^2_{per}}-\left(\mathcal{L}^{-1}(0,1),(0,1)
 				\right)_{\mathbb{L}^2_{per}}=L-L=0,\end{equation}
 			and 
 			\begin{equation}\label{D123}
 				\left( \mathcal{L}^{-1}(0,1), (0,1) \right)_{\mathbb{L}^2_{per}}=\left((0,1),(0,1)\right)_{\mathbb{L}^2_{per}}=L.\end{equation}
 			Hence, from \eqref{D}, \eqref{D12} and \eqref{D123}, $D$ in \eqref{matrixD} takes the form $D=\displaystyle\left[\begin{array}{cc}D_1&  0\\
 				0&  L\end{array}\right]$, where $D_1=(\mathcal{L}_{1}^{-1}1,1)_{L_{per}^2}$. Calculating $D_1$	requires finding an element $\tilde{f}\in H_{per}^2$ such that $\mathcal{L}_1\tilde{f}=1$.	
 			
 			To construct a suitable periodic $\tilde{f}$, we start by analyzing the IVP introduced  in \eqref{Cauchyproblem0} with solution $y$. Furthermore, using the variation of parameters formula, we deduce that
 			$$
 			\mathtt{p}(x)=\frac{1}{\omega}\left(\int_0^xy(s)ds\right)\varphi'(x)-\frac{1}{\omega}y(x)\varphi(x),
 			$$ 
 			is a solution to the equation
 			$$
 			-\omega 	\mathtt{p}''-\cosh(\varphi)	\mathtt{p}=1.
 			$$
 			
 			We will demonstrate that $	\mathtt{p}$ is in fact an $L$-periodic function, which naturally leads to the definition of $\tilde{f}=	\mathtt{p}$. To begin with, we deduce from the oddness of 
 			$y$ that $\displaystyle\int_0^Ly(x)dx=0$. Therefore, since $\varphi(L)=0$, we have $$ 	\mathtt{p}(L)=\frac{1}{\omega}\left(\int_0^Ly(x)dx\right)\varphi'(L)-\frac{1}{\omega}\varphi(L)y(L)=0=	\mathtt{p}(0).$$ At this point, given that 
 			$$\begin{array}{lllll}\displaystyle \omega 	\mathtt{p}'(x)
 				&=&\displaystyle\left(\int_0^xy(s)ds\right)\varphi''(x)-y'(x)\varphi(x),\end{array}$$
 			it follows that $	\mathtt{p}'(L)=	\mathtt{p}'(0)=0$ and $	\mathtt{p}$ is $L$-periodic as requested. So, considering $\tilde{f}=	\mathtt{p}$, we see that  $\tilde{f}$ satisfies the IVP
 			\begin{equation}\label{Cauchyproblem1} 
 				\begin{cases}
 					-\omega\tilde{f}''-\cosh(\varphi) \tilde{f}=1, \\
 					\tilde{f}(0)= 0, \\
 					\tilde{f}'(0)=0.
 				\end{cases}
 			\end{equation}
 			Problem \eqref{Cauchyproblem1} is appropriate for numerical calculations. Indeed, solving numerically  \eqref{Cauchyproblem1} for $L=\pi$, we find   $\displaystyle D_1=(\mathcal{L}_1^{-1}1,1)_{L^2_{per}}=(\tilde{f},1)_{L^2_{per}}=\int_{0}^{L}\tilde{f}(x)dx<0$ for all  $c\in (-1,1)$. A representation of the behavior of $D_1$ in this case, is presented in Figure \ref{fig12}.
 			
 			\begin{figure}[h] 
 				\centering
 				\includegraphics[scale=0.5,angle=0
 				]{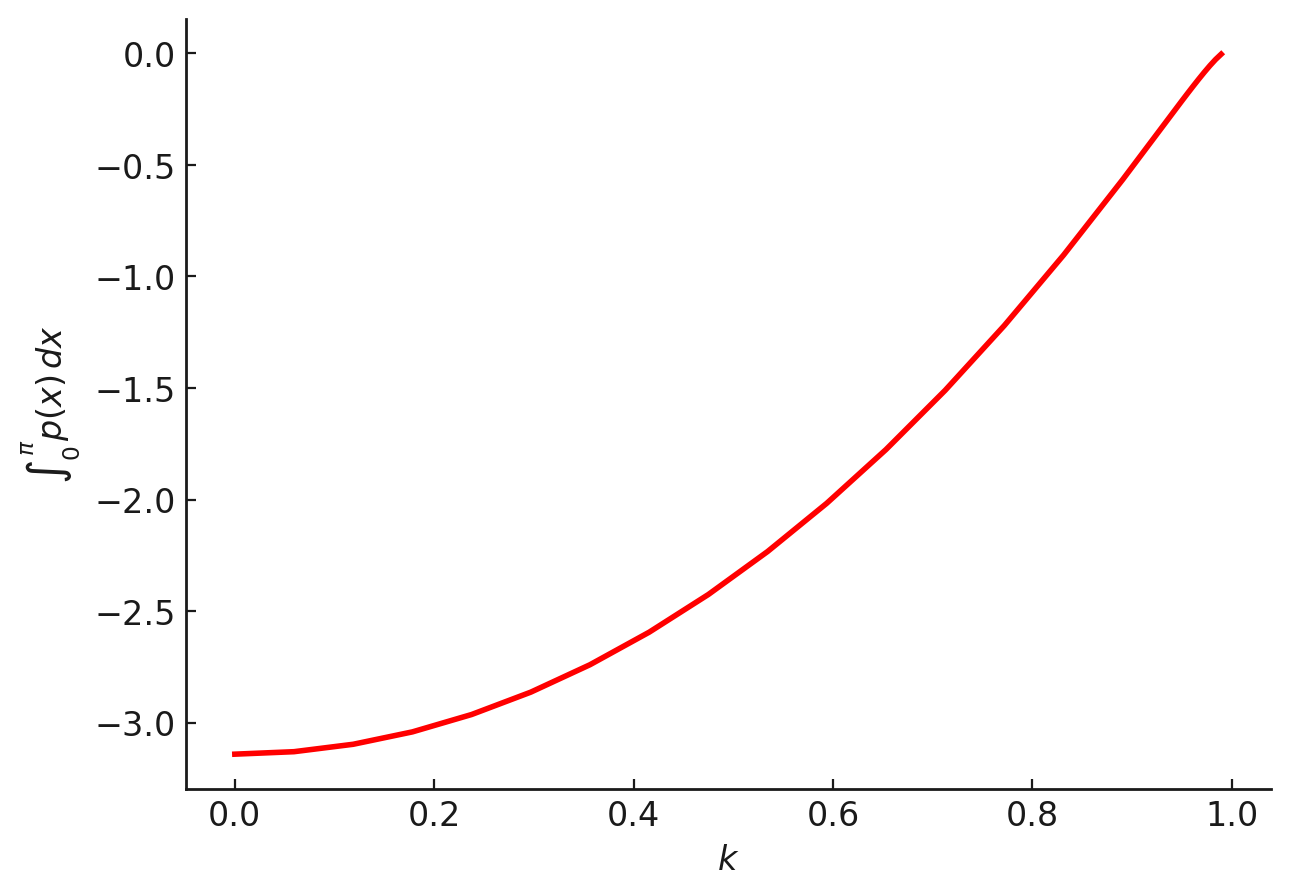} 
 				\caption{Graphic of $D_1$ for $L=\pi$.}\label{fig12}
 			\end{figure}
 		We can prove that $D_1 < 0$ for all $L \in (0,2\pi)$ by using a scaling argument.
 		 In fact, consider $\tilde{\varphi}(x)=\varphi\left(\frac{L}{\tilde{L}}x\right)$, where $\tilde{L}>0$ and $\varphi$ solves the equation \eqref{travSG1}. Since $\varphi$ is $L$-periodic, we obtain that $\tilde{\varphi}$ is $\tilde{L}$-periodic and solves the equation  
 			\begin{equation}\label{travSG1'}
 				-\tilde{\omega} \tilde{\varphi} ''-\sinh(\tilde{\varphi})=0, 
 			\end{equation}where $\tilde{\omega} =\left(\frac{\tilde{L}}{L}\right)^2\omega$. Next, by considering the new solution $\tilde{\varphi}$ instead of $\varphi$ in the initial value problem \eqref{Cauchyproblem0}, we obtain the non-periodic solution $\tilde{y}(x)=y\left(\frac{L}{\tilde{L}}x\right)$ that solves the problem  
 			\begin{equation}\label{Cauchyproblem'} 
 				\begin{cases}
 					- \tilde{y}''-\frac{1}{\tilde{\omega}}\cosh(\tilde{\varphi}) \tilde{y}=0, \\
 					\tilde{y}(0)= 0, \\
 					\tilde{	y}'(0)=\frac{L}{\tilde{L}}\frac{1}{\varphi'(0)}.
 				\end{cases}
 			\end{equation}
 			
 			\indent We also obtain that $\tilde{	\mathtt{p}}=	\mathtt{p}\left(\frac{L}{\tilde{L}}x\right)$ satisfies the problem \eqref{Cauchyproblem1} with $\tilde{\omega}$ in place of $\omega$ and  $\tilde{\varphi}$ in place of $\varphi$. Then, doing the calculations as done before and considering $\tilde{L}=\pi$, we obtain 
 			\begin{equation*}
 				D_1=(\mathcal{L}_1^{-1}1,1)_{L^2_{per}}=(	\mathtt{p},1)_{L^2_{per}}=\int_{0}^{L} 	\mathtt{p}(s)ds=\frac{L}{\pi}\int_{0}^{\pi} \tilde{	\mathtt{p}}(s)ds <0,
 			\end{equation*} as desired.\\
 			\indent Thus, the Index Theorem stated in \cite[Theorem 5.3.2]{kapitula} and \cite[Theorem 4.1]{pel-book} yields
 			\begin{equation*}
 				\text{n}(\mathcal{L}_{1\Pi}) =2-1-0=1 \quad \mbox{and}\quad  	\text{z}(\mathcal{L}_{1\Pi})=1+0=1.
 			\end{equation*}
 			
 			Similarly, the Index Theorem applied to the operators $\mathcal{L}_{\Pi}$ and $\mathcal{L}$ yields the relations
 			\begin{equation}\label{indexformula120}
 				\operatorname{n}(\mathcal{L}_{{\Pi}})=\text{n}(\mathcal{L})-{\rm N}_0-{\rm Z}_0 \quad \text{and} \quad \text{z}(\mathcal{L}_{{\Pi}})=\text{z}(\mathcal{L})+{\rm Z}_0,
 			\end{equation}
 			where the numbers ${\rm N}_0$ and ${\rm Z}_0$ are defined respectively as 
 			\begin{equation}\label{N0Z0}
 				{\rm N}_0=
 				\begin{cases}
 					1, \: \text{if} \: D_1 L<0, \\
 					0, \: \text{if} \: D_1 L\geq 0,\ \
 				\end{cases}
 				\quad \text{and} \qquad\ 
 				{\rm Z}_0=
 				\begin{cases}
 					1, \: \text{if} \: D_1L=0, \\
 					0, \: \text{if} \: D_1 L \neq 0.
 				\end{cases}
 			\end{equation}
 			Hence, ${\rm N}_0=1$ and ${\rm Z}_0=0$, proving that $\operatorname{n}(\mathcal{L}_{\Pi})=1$ and $\operatorname{z}(\mathcal{L}_{\Pi})=1$, as desired. 
 		\end{proof}

 \section{Spectral stability and instability of periodic waves for the sinh-Gordon equation}\label{section5}
 The purpose of this section is to investigate the spectral stability and instability of the periodic traveling wave solutions $\varphi$ given in \eqref{existSG1} for the sinh–Gordon equation \eqref{KF2}, based on the theory developed in \cite{kapitula,natali-thiago}. We have seen that equation \eqref{KF2} can be written, for $\alpha=-1$, in the form of a nonlinear Hamiltonian system as
 \begin{align}\label{hamilt310}
 	\begin{cases}
 		&	U_t=J \mathcal{E}'(U),\quad t>0,\\ 
 		&U|_{t=0}=U_0,
 	\end{cases}	
 \end{align}   where $U=(u,u_t)=(u,v)$, $J$ is the skew-symmetric matrix
 \begin{equation}\label{matrixJ}
 	J=\left(
 	\begin{array}{cc}
 		0 & 1\\
 		-1 & 0\end{array}
 	\right),
 \end{equation} \noindent and $\mathcal{E}'$ denotes the Fr\'echet derivative of the energy $\mathcal{E}$ defined in \eqref{E}.
 
 In order to formulate our problem, we apply the result from Proposition $\ref{lwpthm}$ by working with the strong solution $u$ of the problem $(\ref{CPKG3})$. In particular, this strong solution $u$ is, in particular, a \textit{weak solution} of the problem $(\ref{CPKG})$ as we have mentioned in Remark $\ref{remweakstrong}$. In fact,  let us consider $\mathtt{v}=\partial_x u_t$, that is, $u_t =\partial_x^{-1 }\mathtt{v}$. Then,
 \begin{equation}\label{hamilt310'}
 	\mathtt{v}_t=\partial_x u_{tt}=\partial_x( \partial_x^2u + \sinh(u)). 
 \end{equation} We define the perturbations of the traveling waves as
  \begin{align}\label{hamilt311}
 	\begin{cases}
 		&p(x+ct,t)=u(x,t)-\varphi(x+ct), \\ 
 		&q(x+ct,t)=\mathtt{v}(x,t)-c\varphi''(x+ct).
 	\end{cases}	
 \end{align}  
 By differentiating the first equation in \eqref{hamilt311} with respect to $t$ and equating it to the result obtained by applying the operator $\partial_x^{-1}$ to the second one, we obtain 
 \begin{equation}\label{hamilt312}
 	p_t=-c\partial_x p+\partial_x^{-1 }q.
 \end{equation}
Proceeding with the differentiation of the second equation in \eqref{hamilt311} with respect to $t$, we have that 
 \begin{align}\label{hamilt313}
 	\mathtt{v}_t=c\partial_x q+q_t+c^2 \varphi'''.
 \end{align}
 From \eqref{hamilt310'} and \eqref{hamilt313} it follows that 
 \begin{equation}\label{hamilt314}\begin{array}{llllll}
 	q_t&=&\displaystyle-c\partial_x q+\partial_x^3(p+\varphi) +  \partial_x(\sinh(p+\varphi))-c^2 \varphi'''\\\\
 	&=&\displaystyle-c\partial_x q+\partial_x^3 p+\partial_x(\sinh(p+\varphi))+\omega\varphi '''. 
 \end{array}\end{equation}
Upon differentiating \eqref{KF3} with respect to $x$ and substituting the result into \eqref{hamilt314}, we obtain that
 \begin{align}\label{hamilt315}
 	q_t=&-c\partial_x q+\partial_x^3 p+\partial_x(\sinh(p+\varphi)-\sinh(\varphi)). 
 \end{align}
 Using the Taylor expansion for the function $\sinh(p)$ around the point $\varphi$, one finds that 
 \begin{align*}
 	\sinh(p+\varphi)=\sinh(\varphi)+\cosh(\varphi)p +\mathcal{O}(p^2), 
 \end{align*}that is, 
 \begin{align*}
 	\sinh(p+\varphi)-\sinh(\varphi)\approx\cosh(\varphi)p .
 \end{align*}
 Substituting this linearization into \eqref{hamilt315}, we obtain the linear system in the form 
 \begin{equation}
 	\left\{\begin{array}{lll}
 		 p_t =-c\partial_x p+\partial_x^{-1 }q, \\ \\
 	     q_t =-c\partial_x q+\partial_x^3 p+\partial_x(\cosh(\varphi)p),
 	\end{array}\right.\label{syspert}	
 \end{equation} or equivalently in matrix form
 \begin{align}\label{hamilt316}
 	\begin{pmatrix}
 		p \\ q
 	\end{pmatrix}_t
 	=
 	\begin{pmatrix}
 		\begin{array}{ccc}
 			-c \, \partial_x && \partial_x^{-1} \\
 			\partial_x^3 + \partial_x(\cosh(\varphi)\cdot)  && -c \, \partial_x
 		\end{array}
 	\end{pmatrix}
 	\begin{pmatrix}
 		p \\ q
 	\end{pmatrix}.
 \end{align} 
 \indent Next, let us consider the growing mode solutions given by $p(x,t)=e^{\lambda t}g(x)$ and $q(x,t)=e^{\lambda t}h(x)$. After substituting into the system $(\ref{hamilt316})$, we obtain the spectral problem
 \begin{align}\label{hamilt3165}
 	\begin{pmatrix}
 		\begin{array}{ccc}
 			-c \, \partial_x && \partial_x^{-1} \\
 			\partial_x^3 + \partial_x(\cosh(\varphi)\cdot)  && -c \, \partial_x
 		\end{array}
 	\end{pmatrix}
 	\begin{pmatrix}
 		g \\ h
 	\end{pmatrix}=\lambda\begin{pmatrix}
 		g \\ h
 	\end{pmatrix}.
 	 \end{align} 
 Because of the presence of the derivative in the hyperbolic function, we see that $(\ref{hamilt3165})$ is then equivalent to the following projected problem
 \begin{align}\label{hamilt31651}
 	\begin{pmatrix}
 		\begin{array}{ccc}
 			-c \, \partial_x && \partial_x^{-1} \\
 			\partial_x^3 + \partial_x\left(\cosh(\varphi)\cdot-\frac{1}{L}\int_0^L\cosh(\varphi)\cdot dx\right)  && -c \, \partial_x
 		\end{array}
 	\end{pmatrix}
 	\begin{pmatrix}
 		g \\ h
 	\end{pmatrix}=\lambda\begin{pmatrix}
 		g \\ h
 	\end{pmatrix}.
 	 \end{align} 
 System $(\ref{hamilt31651})$ can be reduced in the projected \textit{quadratic pencil} given by
 \begin{equation}\label{quadrapencil}
 \lambda^2g+2c\lambda \partial_xg-(1-c^2)\partial_x^2g-\cosh(\varphi)g+\frac{1}{L}\int_0^L\cosh(\varphi)gdx=0,
 \end{equation}
 which is related with the projected linearization of the Hamiltonian structure $(\ref{hamilt310})$ given by
 \begin{equation}\label{hamilt2.0}
 		 J\mathcal{L}_{\Pi}V=\lambda V,
 	 \end{equation}  where $V=\begin{pmatrix}
 g\\
\tilde{h}
 \end{pmatrix}.$ It is important to note that the associated quadratic pencil for the spectral problem \eqref{hamilt2.0} is also given by \eqref{quadrapencil}. We now introduce the following definition of spectral stability:
 
 \begin{defi}\label{def.spc.ins}
 	The periodic wave $(\varphi, c\varphi')$ is said to be spectrally stable if the quadratic pencil in \eqref{quadrapencil} is solvable and all corresponding values of $\lambda$ are purely imaginary. Otherwise, that is, if there exists some $\lambda$ with $\operatorname{Re}(\lambda)>0$ and $u\not\equiv 0$ such that the pair $(\lambda,u)$ satisfies \eqref{quadrapencil}, we say that the periodic wave $(\varphi, c\varphi')$ is spectrally unstable.
 \end{defi}
 
 \begin{remark}\label{remspecstab}
 	In the context of classical spectral stability definitions, we say that $(\varphi, c\varphi')$ is spectrally stable if the spectrum of $J\mathcal{L}_{\Pi}$ is contained in the imaginary axis of the complex plane. Alternatively, $(\varphi,c\varphi')$ is said to be spectrally unstable if the spectrum of $J\mathcal{L}_{\Pi}$ contains an eigenvalue with positive real part. Both definitions are equivalent in the space $\mathbb{L}_{per,m}^2$.
 \end{remark}

According to Remark $\ref{remspecstab}$, we need to study the spectrum of $J\mathcal{L}_{\Pi}$. In fact, by Remark $\ref{smoothcomega}$, we can differentiate equation $(\ref{travSG1})$ with respect to $c\in(-1,1)$ to deduce
 \begin{equation}\label{L1c}
 	\mathcal{L}_1\left(\frac{\partial\varphi}{\partial c}\right)=-2c\varphi''.
 \end{equation}   Furthermore, a simple calculation also gives
 \begin{equation}\label{calcL1}
 	\mathcal{L}\left(\frac{\partial\varphi}{\partial c},\frac{\partial}{\partial c}(c\varphi')\right)=\left(\mathcal{L}_1 \left(\frac{\partial\varphi}{\partial c}\right)+c\varphi'', \varphi '\right)
 	=(-c\varphi'',\varphi'). %=\mathcal{F}'(h,ch'),
 \end{equation} Since both  $\frac{\partial\varphi}{\partial c}$ and $\frac{\partial}{\partial c}(c\varphi')$ have the zero mean property, we have, in particular, that
 \begin{equation}\label{calcL2}
 	\mathcal{L}_{\Pi}\left(\frac{\partial\varphi}{\partial c},\frac{\partial}{\partial c}(c\varphi ')\right)
 	=(-c\varphi'',\varphi'). %=\mathcal{F}'(h,ch').
 \end{equation}
 
 Next, we consider the $3\times 3$ matrix $\mathcal{D}$ defined as 
 \begin{equation}\label{matrixD'} 
 	\mathcal{D}=\left[\begin{array}{llllll}(\mathcal{L}^{-1}e_0,e_0)_{\mathbb{L}^2_{per}}& & (\mathcal{L}^{-1}e_1,e_0)_{\mathbb{L}^2_{per}}& & (\mathcal{L}^{-1}e_2,e_0)_{\mathbb{L}^2_{per}}\\\\
 		(\mathcal{L}^{-1}e_0,e_1)_{\mathbb{L}^2_{per}}& & (\mathcal{L}^{-1}e_1,e_1)_{\mathbb{L}^2_{per}}& & (\mathcal{L}^{-1}e_2,e_1)_{\mathbb{L}^2_{per}}\\\\
 		(\mathcal{L}^{-1}e_0,e_2)_{\mathbb{L}^2_{per}}& & (\mathcal{L}^{-1}e_1,e_2)_{\mathbb{L}^2_{per}}& & (\mathcal{L}^{-1}e_2,e_2)_{\mathbb{L}^2_{per}}\end{array}\right],
 \end{equation}where $e_0=(-c\varphi '', \varphi')$, $e_1=(1,0)$ and $e_2=(0,1)$.
 
We have that $\mathcal{L}$ is self-adjoint and $\Ker(\mathcal{L}) = [(\varphi',c\varphi'')]$. Furthermore, with respect to the inner product in $\mathbb{L}^2_{per}$, the set $\{e_0,e_1,e_2\}$ is contained in $\Ker(\mathcal{L})^\perp=\operatorname{Range}(\mathcal{L})$. This ensures that the inverse operator $\mathcal{L}^{-1}$, which appears in the entries of the matrix $\mathcal{D}$, is well-defined, since it acts on the restricted domain and codomain $\mathcal{L}^{-1} :\Ker(\mathcal{L})^\perp\rightarrow \Ker(\mathcal{L})^\perp$. To proceed with the spectral analysis and relate it to the matrix $\mathcal{D}$, it is necessary to introduce some important concepts.
 
 \begin{defi}\label{de1.krein}
 We call the number of positive real eigenvalues of $J \mathcal{L}_{\Pi}$ (up to multiplicity) the real Krein index, denoted by $k_r$, and the number of eigenvalues of $J \mathcal{L}_{\Pi}$ with a nonzero imaginary part and a positive real part (again up to multiplicity), the complex Krein index, denoted by $k_c$.
 \end{defi}
 
 \begin{defi}\label{de2.krein}
 	Let $\lambda$ be a purely imaginary nonzero eigenvalue of the operator $J \mathcal{L}_{\Pi}$
 	with associated generalized eigenspace $E_\lambda= [v_1^\lambda, \dots, v_m^\lambda]$. We define the Krein signature of $\lambda$ by
 	\begin{equation*}
 		k_i^{-}(\lambda)=\operatorname{n} \left((w,(\mathcal{L} |_{E_\lambda})w)_{L^2_{per}}  \right), 
 	\end{equation*}where $w \in E_\lambda$ and $\operatorname{n} \left((w,(\mathcal{L}|_{E_\lambda})w)_{L^2_{per}}  \right)$ denotes the dimension of the maximal subspace for which  $(w,(\mathcal{L}|_{E_\lambda})w)_{L^2_{per}} <0$. If $k_i^-(\lambda) \ge 1$, then the eigenvalue $\lambda$ is said to have a negative Krein signature; otherwise, it is said to have a positive Krein signature. Furthermore, we define the total Krein signature by the quantity
 	\begin{equation*}
 		k_i^- = \sum_{ \lambda \neq 0, \ \lambda\in (i\mathbb{R})} k_i^-(\lambda).
 	\end{equation*}
 	  \end{defi}
 
 \begin{defi}\label{Hamk}
 	The Hamiltonian–Krein index is given by the sum
 	$$ 	\mathcal{K}_{Ham} =k_r+k_c+k_i^-$$
 	of the real, complex, and negative Krein indices, defined in Definitions \eqref{de1.krein} and \eqref{de2.krein}.
 \end{defi}

With the previously established results, we are able to determine the intervals of spectral stability and instability for the projected Cauchy problem, according to Definition \ref{def.spc.ins}. \\

\noindent\textit{Proof of the Theorem \ref{stabthm}.} Since $\Ker(\mathcal{L})=\Ker(\mathcal{L}_{\Pi})=[(\varphi',c\varphi'')]$, it follows from Definition $\ref{Hamk}$ and \cite[Theorem 7.1.5]{kapitula} that the periodic wave $(\varphi, c\varphi')$ is spectrally stable if the quantity
\begin{equation}\label{stab.condition}
	\mathcal{K}_{Ham}=\operatorname{n}(\mathcal{L}) - \operatorname{n}(\mathcal{D})
\end{equation}
is zero. Now, if the difference in \eqref{stab.condition} is an odd number, the periodic wave is spectrally unstable.
 
According to the proof of Proposition $\ref{leman1}$ and the identity $(\mathcal{L}^{-1}e_1,e_1)_{\mathbb{L}^2{per}}=D_1$, determining the remaining entries of the matrix $\mathcal{D}$ in \eqref{matrixD'} requires calculating the following inner products:
 $(\mathcal{L}^{-1}e_0,e_0)_{\mathbb{L}^2_{per}}$, $(\mathcal{L}^{-1}e_1,e_0)_{\mathbb{L}^2_{per}}=	(\mathcal{L}^{-1}e_0,e_1)_{\mathbb{L}^2_{per}}$, and $(\mathcal{L}^{-1}e_2,e_0)_{\mathbb{L}^2_{per}}=		(\mathcal{L}^{-1}e_0,e_2)_{\mathbb{L}^2_{per}}$. In fact, it follows from \eqref{calcL1} that \begin{equation}\nonumber
 	\mathcal{L}^{-1}e_0=	 \left(\frac{\partial\varphi}{\partial c},\frac{\partial}{\partial c}(c\varphi')\right) .
 \end{equation}
 Then, after some computations and using the fact that the function $\varphi'$ has zero mean, the function $\frac{\partial\varphi}{\partial c}$ is odd, and $\frac{\partial\varphi'}{\partial c}$ is $L$-periodic, we obtain that 
 \begin{align}\label{eqD1}
 	(\mathcal{L}^{-1}e_0,e_0)_{\mathbb{L}^2_{per}} =\int_{0}^{L}\left[(\varphi')^2- 2c\varphi''\frac{\partial\varphi}{\partial c}\right]dx=\int_{0}^{L}\left[(\varphi')^2+ c\frac{\partial(\varphi')^2}{\partial c}\right]dx=\frac{\partial}{\partial c}\int_0^L[c(\varphi')^2]dx,
 \end{align}
 
 \begin{align}\label{eqD2}
 	(\mathcal{L}^{-1}e_0,e_1)_{\mathbb{L}^2_{per}}=% \left(\frac{\partial\varphi}{\partial c},1\right)_{\mathcal{L}^2_{per}}+\left(\varphi'+c\left(\frac{\partial\varphi}{\partial c}\right)',0\right)_{\mathcal{L}^2_{per}}=
 	\int_{0}^{L}\frac{\partial\varphi}{\partial c} dx=0,
 \end{align}
 and 
 \begin{align}\label{eqD3}
 	(\mathcal{L}^{-1}e_0,e_2)_{\mathbb{L}^2_{per}}=% \left(\frac{\partial\varphi}{\partial c},0\right)_{\mathcal{L}^2_{per}}+\left(\varphi'+c\left(\frac{\partial\varphi}{\partial c}\right)',1\right)_{\mathcal{L}^2_{per}}=
 	\int_{0}^{L}\left[\varphi'+c\left(\frac{\partial\varphi'}{\partial c}\right)\right] dx=0.
 \end{align}
 \indent Using equations \eqref{eqD1}, \eqref{eqD2}, and \eqref{eqD3}, the matrix $\mathcal{D}$ in \eqref{matrixD'} can be simplified to
 \begin{equation}\label{matrixD''}
 	\mathcal{D}=\left[\begin{array}{llllll} \displaystyle \frac{\partial}{\partial c}\int_0^L[c(\varphi')^2]dx
 		& &  0& & 	 	0\\\\0& &D_1& & 0\\\\
 		0& & 0& & L\end{array}\right],
 \end{equation}
 where $D_1=(\mathcal{L}_{1}^{-1}1,1)_{L_{per}^2}$. Our aim is to determine, for fixed values of $L \in (0, 2\pi)$, the sign of $\det(\mathcal{D})$ for all values of $c\in (-1,1)$. Indeed, we obtain by applying the chain rule to the integral term in \eqref{matrixD''} that
 \begin{align}\label{detD}
 	\det(\mathcal{D})=&D_1 L \frac{\partial}{\partial c}\int_{0}^{L}[c(\varphi')^2]dx=D_1L\left[\int_{0}^{L}(\varphi')^2dx-2c^2\left(\frac{d\omega}{dk}\right)^{-1}\frac{\partial}{\partial k}\int_{0}^{L}(\varphi')^2dx\right].
 \end{align}
 From $(\ref{varphilinha})$, using properties of elliptic integrals and the relation $b=\frac{4K(k)}{L}$, we have that
 \begin{equation}\label{intvarphilinha}
 \int_{0}^{L}(\varphi')^2dx=64k^2\frac{K(k)}{L}\int_0^{K(k)}\frac{{\rm cn}(x;k)^2}{{\rm dn}(x;k)^2}dx=\frac{64K(k)(K(k)-E(k))}{L}.
 \end{equation}
 Since $\omega$ is given by \eqref{speed2}, from \eqref{detD}, \eqref{intvarphilinha}, and Formulas (710.00) and (710.05) in \cite{byrd} we obtain 
 
 \begin{equation}\label{detD1}\begin{array}{llll}
 \det(\mathcal{D}) 
 &=&D_1L\displaystyle \frac{64K(k)^3}{L^3(K(k)-E(k))}\left\{k^2L^{2} -16(1-k^{2})  \left[k^{2}K(k)^{2} - (K(k)-E(k))^{2}\right] \right\} \\\\ 
&=&\displaystyle D_1L\frac{64K(k)^3}{L^3(K(k)-E(k))}\mathsf{r}(k,L).
 \end{array}
 \end{equation}
 
As $L>0$, $D_1<0$, and $\displaystyle\frac{64K(k)^3}{L^3(K(k)-E(k))}>0$ for all $k\in (0,1)$, we need to analyze the sign of the function $\mathsf{r}(k,L) $ in terms of $k\in (0,1)$ and for a fixed $L\in (0,2\pi)$.  In fact, it is well known that the complete elliptic integrals satisfy 
$\displaystyle K(k),\, E(k) \to \frac{\pi}{2}$ as $k \to 0^{+}$, $K(k) \to +\infty$ as $k \to 1^{-}$, 
and $E(k) \to 1 $ as $k \to 1^{-}$. With these convergences and by applying L’Hôpital’s rule, we have 
%In addition, it follows from the limit $\displaystyle\lim_{k\rightarrow1^-}\left[K(k)-\ln \left(\frac{4}{1-k^2}\right)\right]=0$ (see \cite[Formula (112.01)]{byrd}) that
 $$\displaystyle\lim_{k\rightarrow1^-}\left[(1-k^2)K (k)  \right]=\lim_{k\rightarrow1^-}\left[(1-k^2)K (k)^2 \right]=0.$$
Thus, for each fixed $L\in(0,2\pi)$, the function $\mathsf{r}$ tends to $0$ as $k$ approaches $0^+$, and to $L^{2}$ as $k$ approaches $1^{-}$. In addition, this function is smooth and changes its sign in a unique point $k_0=k_0(L)\in(0,1)$. In fact, differentiating the expression of $\mathsf{r}$ with respect to $k$, we obtain
\begin{align}\label{detD1.eq2}\nonumber
	\frac{\partial\mathsf{r}}{\partial k}= 2k\left\{   L^2-16 \left[ (1-k^2)K(k)^2-2E(k)K(k)+2E(k)^2\right]\right\} 
	=2k\left\{   L^2-16\mathsf{f}(k)\right\}.
\end{align}
Since $\displaystyle\lim_{k\rightarrow 0^{+}}\frac{\partial\mathsf{r}}{\partial k}=L^2-4\pi^2<0$ and $\displaystyle\lim_{k\rightarrow 1^{-}}\frac{\partial\mathsf{r}}{\partial k}=+\infty$, we obtain, by the intermediate value theorem, the existence of $k_1=k_1(L)\in(0,1)$ such that  $\displaystyle \frac{\partial\mathsf{r}}{\partial k}\Big|_{k=k_1}=0$. On the other hand, function $\displaystyle\frac{\partial\mathsf{r}}{\partial k}$ is strictly increasing in terms of elliptic modulus $k$. This implies that $k_1$ is the only critical point of $\mathsf{r}$ over the interval $(0,1)$. This means, for each fixed $L\in (0,2\pi)$, that there 
exists a unique $k_{0}$, depending on $L$, such that $\mathsf{r}(k_{0},L)=0$, 
$\mathsf{r}(k,L)<0$ for $k\in(0,k_{0})$, and $\mathsf{r}(k,L)>0$ for 
$k\in(k_{0},1)$. Moreover, it is important to emphasize that the critical point $k_1$ of $\mathsf{r}$ moves closer to $1$ as $L$ approaches $2\pi$, and consequently, the same holds for the root $k_0$. Below, we illustrate the behavior of 
$\mathsf{r}$ for certain values of $L\in(0,2\pi)$.

\begin{figure}[h] 
	\centering
	\includegraphics[scale=0.5,angle=0
	]{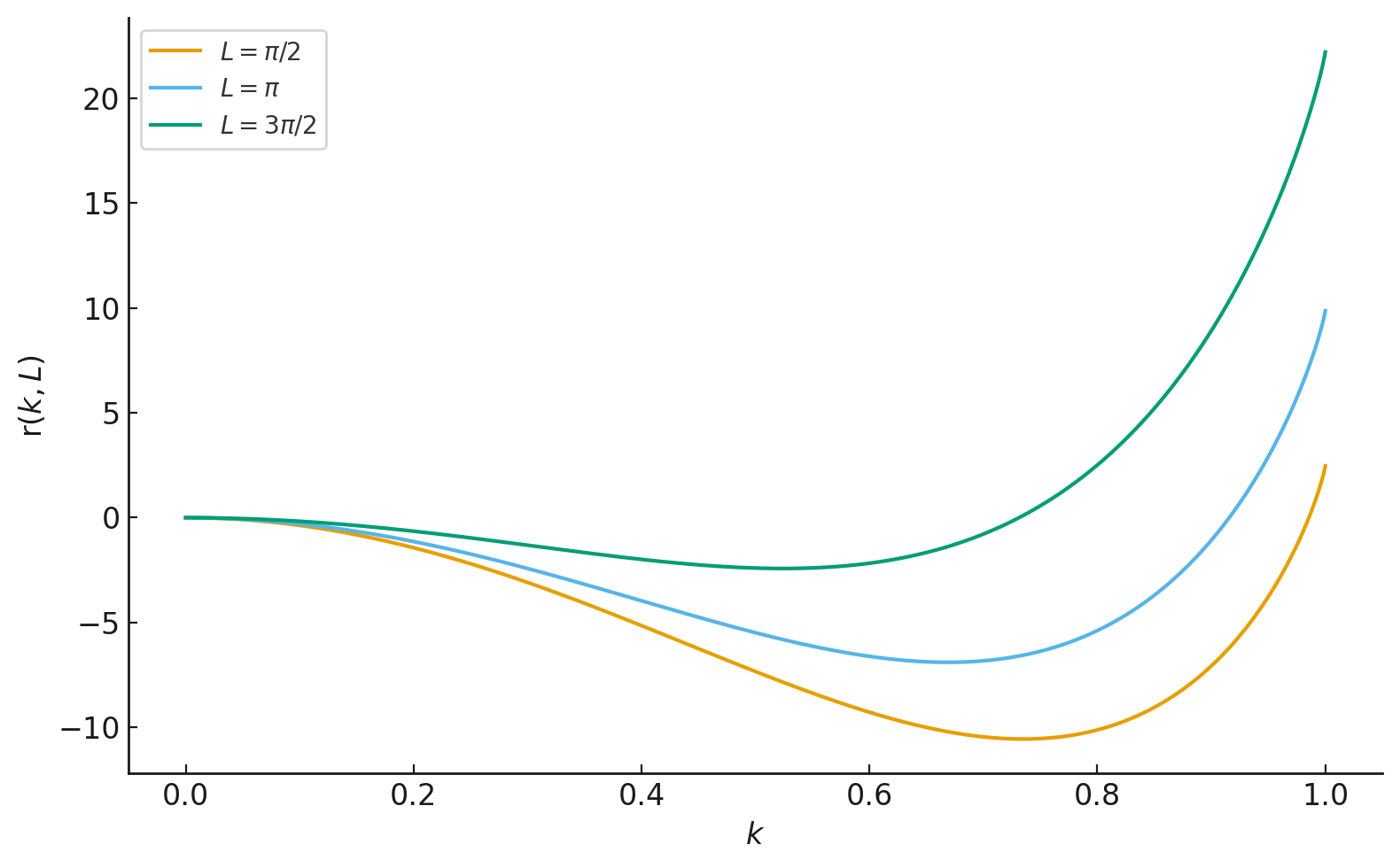} 	\caption{Graph of $\mathsf{r}(k,L)$ for $L=\frac{\pi}{2},\pi, \frac{3\pi}{2}$.}\label{rkgraphic}
\end{figure}

 Now, since the determinant of $\mathcal{D}$ is the product of its three eigenvalues, $L>0$, and $D_1<0$, it follows that
 \begin{align*}
 	\operatorname{n}(\mathcal{D})= \begin{cases}
 	 	&  2, \quad \text{if} \  k\in (0,k_0), \\ 
 	 &  1, \quad \text{if} \ k\in [k_0,1). 
 	 \end{cases}
 \end{align*}
  Therefore, the Hamiltonian-Krein index becomes
 \begin{align*}
 	\mathcal{K}_{Ham}=	\operatorname{n}(\mathcal{L}) - \operatorname{n}(\mathcal{D})=\begin{cases}
 		&  0, \quad \text{if} \  k\in (0,k_0), \\ 
 		&  1, \quad \text{if} \ k\in [k_0,1). 
 	\end{cases}
 \end{align*} 
This completes the proof of the theorem.
 \hfill $\blacksquare$
\newpage

\end{document}